\providecommand{\keywords}[1]
{
  \small	
  \textbf{{Keywords.}} #1
}
\providecommand{\AMS}[1]
{
  \small	
  \textbf{{AMS subject classifications.}} #1
}
\numberwithin{equation}{section}
\newtheorem{theorem}{Theorem}[section]
\newtheorem{remark}[theorem]{Remark}
\newtheorem{definition}[theorem]{Definition}
\newtheorem{lemma}[theorem]{Lemma}
\newtheorem{proposition}[theorem]{Proposition}
\newtheorem{condition}[theorem]{Condition}
\title{Finite State Graphon Games with Applications to Epidemics}
\author{Alexander Aurell\footnote{Department of Operations Research and Financial Engineering,
  Princeton University, 
  Princeton, NJ 08544 
  (\href{mailto:aaurell@princeton.edu}{aaurell@princeton.edu},
  \href{mailtorcarmona@princeton.edu}{rcarmona@princeton.edu},
  \href{mailto:gokced@princeton.edu}{gokced@princeton.edu},
  \href{mailto:lauriere@princeton.edu}{lauriere@princeton.edu}).}
\and Ren\'e Carmona
    \footnotemark[1]
\and G\"ok\c ce Dayan{\i}kl{\i}
    \footnotemark[1]
\and Mathieu Lauri\`ere
    \footnotemark[1]
}
\date{}
\begin{document}

\maketitle
\begin{abstract}
We consider a game for a continuum of non-identical players evolving on a finite state space. Their heterogeneous interactions are represented by a graphon, which can be viewed as the limit of a dense random graph. The player's transition rates between the states depend on their own control and the interaction strengths with the other players. We develop a rigorous mathematical framework for this game and analyze Nash equilibria. We provide a sufficient condition for a Nash equilibrium and prove existence of solutions to
a continuum of fully coupled forward-backward ordinary differential equations characterizing equilibria. Moreover, we propose a numerical approach based on machine learning tools and show experimental results on different applications to compartmental models in epidemiology.
\end{abstract}

\vskip3mm
\keywords{Graphon Games, Epidemiological Models, Machine Learning}

\vskip3mm
\AMS{  92D30,   
       49N90,   
       91A13,   
       91A15,   
       62M45.  	
}

\vskip 12pt\noindent
\emph{\textbf{Funding.}}
{This work was done with the support of NSF DMS-1716673, ARO W911NF-17-1-0578, and AFOSR \# FA9550-19-1-0291.}

\section{Introduction}

With the recent pandemic of COVID-19, the importance on how to manage large populations in order to control the evolution of the disease has been understood clearly. How a pandemic plays out is a consequence of the interplay of many complex processes, e.g., disease specific spread mechanisms, the network of social interactions, and society-wide efforts to stop or slow the spread.
As individuals, we have all made choices during the ongoing pandemic about the extent to which we minimize our personal risk of being infected. However, there is a real trade-off between being careful and the pursuit of happiness and, as we all have learned by now, our risk is not only determined by our own vigilance but also by others' choices and our environment. 

In the framework of rational agents\footnote{We will interchangeably use the words agent, player, and individual.}, each individual anticipates the action of their neighbours, neighbours' neighbours, etc., and any other outside influence, then selects their action as a best response to others' actions. In other words, they want optimize their private outcome while taking into account their surrounding environment, which includes other agents' actions. This type of strategic interaction is a non-cooperative game. The communication between agents in the game may be restricted by geography, social circles, and other factors. Moreover, people interact with different intensity, depending on their occupation or personal disposition. Hence, the agents in the game, each with their own predisposition for risk, will act in a wide variety of ways and thus naturally form a heterogeneous crowd.

Consider the game discussed above where all agents anticipate the others' action and then selfishly plays a best response to that. Strategy profiles (the collection of all players' actions) consistent with such behavior are Nash equilibria, \textit{i.e.}, profiles such that no player can profit from a unilateral deviation. Computing Nash equilibria in games with large number of players is particularly hard; however, under some specific assumptions, approximate equilibria can be found by using the mean field games approach developed independently by Lasry-Lions \cite{lasry2006jeux,lasry2006jeux2} and Huang-Malham\'e-Caines \cite{huang2006large}. The approach has found many practical applications, examples include the mathematical modeling of price movement in energy markets; pedestrian crowd motion and evacuation; and epidemic disease spread.

One of the fundamental assumptions in mean field game theory is that agents are indistinguishable and interact identically regardless of with whom they interact. However, in some real-world applications such as the modeling of epidemics, the diversity of individuals and the variation of their interactions are important factors to consider. Examples of such include the effects of travel restrictions, multiple age groups with distinct social behavior and risk profiles, and the spectrum of preexisting health conditions. The aspects listed above require the game to have a non-uniform underlying network structure. Games with a large number of non-identical players can be analyzed with so-called graphon games whenever the network specifying the interactions is dense\footnote{Loosely speaking, a dense network or graph is one in which the number of edges is close to the maximal number of edges. For such graphs, there is a functional limit of the adjacency matrix interpreted as a step function. See \cite{lovasz2012large} for an expos\'e of the theory of such limits.}.

Epidemics are driven by the spread of the disease from infected to susceptible agents. The set of susceptible agents is not necessarily the whole non-infected population, depending on the disease immunity can be gained after exposure. The evidence suggests that the COVID-19 virus mainly spreads through close contact\footnote{\url{https://www.cdc.gov/coronavirus/2019-ncov/prevent-getting-sick/how-covid-spreads.html}}. 
Fortunately, the disease transmission probability can be decreased by the efforts of the individual. For example, an individual can choose to avoid public and closed spaces, wear a protective mask or start shopping online. When two people meet the disease transmission probability depends on both sides’ effort. The disease is less likely to occur if both parts are wearing protective masks instead of just one part. However, the decrease in the risk of transmission is not additive in the interacting agents' efforts. Using this intuition, we assume in this paper that disease transmission likelihood depends on the efforts of people in a multiplicative way. The effort of the agents to minimize their risk will therefore be given the name ``contact factor control", in line with the game-based model introduced in \cite{aurell2020optimal}.

Before giving the proper description of the epidemiological graphon game model, how it can be approached numerically, and all technical details, we enlarge upon the heuristics of graphon-based interaction with contact factor control and review the literature of related fields of research in the next sections.

\subsection{The SIR Graphon Game with Contact Factor Control}
\label{sec:intro-example}

The most famous compartmental model in epidemics is arguably the classical Susceptible-Infected-Removed (SIR) model\footnote{Depending on the modeling, R can refer to ``Removed" or ``Recovered".}. In this section, we take advantage of its wide familiarity and compact formulation to further motivate for the concept of contact factor control and graphon-type interaction. 

In order to give the description of the rate at which agents become infected, we need to first introduce some notation. Consider $N$ individuals, each of whom transitions between the states Susceptible ($\mathsf{S}$), Infected ($\mathsf{I}$), and Removed ($\mathsf{R}$). An individual in state $\mathsf{R}$ has either gained immunity or deceased. Denote the state of agent $j\in\{1,\dots, N\}$ at time $t$ by $X^{j,N}_t$. A susceptible individual might encounter infected individuals, resulting in disease transmission. Encounters occur pairwise and randomly throughout the population with intensity $\beta$. The number of encounters with infected agents in a short time interval $[t-\Delta t,t)$ is approximately proportional to the the share of the population in state $\mathsf{I}$ at $t$. Between each agent pair $(j,k)$, we set the interaction strength to $w(x_j,x_k)$, where $w$ is a graphon (see Definition 1 for its definition) and $x_j, x_k$ are random variables uniformly distributed on $[0,1]$. 
Hence agent $k$'s transitions rate from state $\mathsf{S}$ to $\mathsf{I}$ is scaled by $w$, thus of the form $\beta \frac{1}{N}\sum_{k=1}^N w(x_j,x_k)\mathbf{1}_{\mathsf{I}}(X^{k,N}_{t-})$. Upon infection, an individual starts the path to recovery. The jump from state $\mathsf{I}$ to $\mathsf{R}$ happens after an exponentially distribution time with rate $\gamma$. The state $\mathsf{R}$ is absorbing. 
    
Denoting $Z^{j,N}_t := \frac{1}{N}\sum_{k=1}^N w(x_j,x_k)\mathbf{1}_{\mathsf{I}}(X^{k,N}_{t-})$, the transition rate matrix for player $j\in\{1,\dots, N\}$ is
\begin{equation}
    Q(Z^{j,N}_t) = 
    \begin{bmatrix}
        -\beta Z^{j,N}_t & \beta Z^{j,N}_t & 0
        \\
        0 & -\gamma & \gamma
        \\
        0 & 0 & 0
    \end{bmatrix}.
\end{equation}
At this stage, the distinguishability of the players is seen in the aggregate variables $(Z^{j,N}_t)_{j=1}^N$ which differ in value from player to player. As $N\rightarrow \infty$, we expect the probability distribution flow of player $j$ to converge to the solution of the ODE
\begin{equation}
\label{eq:SIR1}
    \dot{p}^{x_j}(t) = p^{x_j}(t)Q(Z^{x_j}_t),\quad p^{x_j}(0) = p_0^{x_j},
\end{equation}
where $p_0^{x_j}$ is some given initial distribution over the states and
\begin{equation}
    Z^{x_j} := \int_I w(x_j,y)p^y_t(\mathsf{I})dy.
\end{equation}
Scaling $p^x_t$, $x\in [0,1]$, by a population size $N$, $Np^x_t =: (S^x(t), I^x(t), R^x(t))$,
we retrieve a formulation of the compartmental SIR model with graphon-based interactions 
\begin{align*}
    \dot{S}^x(t) &= -\frac{\beta}{N}\left(\int_I w(x,y)I^y(t)dy\right)S^x(t),& S^x(0) &= Np^x_0(\mathsf{S}),
    \\
    \dot{I}^x(t) &= \frac{\beta}{N}\left(\int_Iw(x,y)I^y(t)dy\right)S^x(t) - \gamma I^x(t),& I^x(0) &= Np^x_0(\mathsf{I}),
    \\
    \dot{R}^x(t) &= \gamma I^x(t),& R^x(0) &= Np^x_0(\mathsf{R}).
\end{align*}

So far we have considered a model without control. Assuming agents choose their ``contact factor" in line with the discussion above in order to decrease their risk of getting infected, we can express this new feature mathematically as follows: Given that the meeting frequency is $\beta$, pairing is random, disease spreads from infected agents to susceptible, and the spread probability is scaled by the efforts of the individuals that meet in a multiplicative way, the transition rate for individual $j$ from $\mathsf{S}$ to the $\mathsf{I}$ becomes
\begin{equation}
\label{eq:intro-example-1}
    \beta \alpha^j_t \frac{1}{N}\sum_{k=1}^N w(x_j,x_k) \alpha^k_t \mathbf{1}_{\mathsf{I}}(X^{k,N}_{t-}),
\end{equation}
where $\alpha^k_t$ denotes the (contact factor) action of individual $k\in\{1,\dots, N\}$ at time $t$, selected from set of actions $A$. Along the lines of the heuristics of mean field game theory, we anticipate that in an appropriate approximation of our interacting system in the limit $N\rightarrow \infty$ the representative agent $x$ transitions from susceptible to infected with rate
\begin{equation}
\label{eq:intro-example-2}
    \beta \alpha_{t} \int_Iw(x,y)\left(\int_A a \rho^y_t(da,\mathsf{I})\right)dy,
\end{equation}
where $\rho^y_t$ is the joint distribution of action and state of player $y$ at time $t$. 

\subsection{Related Literature}

\subsubsection{Graphon Games and Finite State Space Mean Field Games} 
\label{subsubsec:mfg_lit}
Challenges related to the large number of agents arise in the game theory, and mean field game (MFG) theory presents a toolbox to compute equilibria in these games with large number of players. MFGs were first developed for continuous state space models \cite{lasry2006jeux,lasry2006jeux2,huang2006large} and later for a finite state space models \cite{gomes2010discrete,kolokoltsov2012nonlinear,Gomes2013}. The theory for finite state MFGs has been extended in many directions, with contributions including a probabilistic approach \cite{Cecchin2018}, the master equation approach~\cite{bayraktar2018analysis}, minor-major player games \cite{Carmona2016} and extended games\footnote{In the MFG setup, ``extended" refers to the models that the interactions are happening through the joint distribution of action and state of players.} \cite{Carmona2018Extended}. Further, finite state mean field control, risk-sensitive control, and zero-sum games are treated in \cite{choutri2018stochastic,choutri2019mean,choutri2019optimal} which cover cases of unbounded jump intensities. Graphon games \cite{delarue2017,Caines_2018,caines2019graphon,Parise_2019,carmona2019stochastic,gao2020lqg,aurell2021stochastic}
have recently been receiving an increasing research interest. The motivation is the study of strategic decision making in the face of a large non-complete but dense networks of distinguishable agents. The graphon game's rising popularity stems from its ability to handle heterogeneity of agents to an extent far beyond the MFG theory.

\subsubsection{Mean Field Games and Related Models for Epidemics}

Decision making in compartmental models (\textit{e.g.}, the SIR model) have been studied intensively for a long time, with an increasing interest recently with the COVID-19 pandemic. In the form of games and optimal control problems, disease-combating efforts ranging from strategies for social contracts to vaccination have been analyzed in the literature. Here, we focus on work relying on the graphon game theory and the mean-field approach.

During an epidemic where the disease is prone to close-contact transmission (one example being COVID-19) control of the contact rate or other social distancing protocols are go-to solutions in the fight against the disease. Such strategies and related variations have been studied in the context of mean field game \cite{Elie2020,Cho2020}, mean field optimal control \cite{lee2020}, and mean field type games \cite{Tembine2020}. In order to accurately asses the risks in the decision making process, in reality the population needs to be tested for the disease. Two recent papers studying optimal testing policies are \cite{charpentier2020covid,Hubert2020}. The effects of vaccination and related decision makings are not studied in this paper, but they have been analyzed in MFG-related settings since before the COVID-19 pandemic \cite{laguzet2015individual,laguzet2016equilibrium,salvarani2016individual,hubert2018nash,doncel2017mean,gaujal20vaccination}. Recently, the interplay between a population in which a disease spreads and a regulator has been studied in the form of Stackelberg games. In such models, the members in the populations are taking actions based on policies issued by the regulator, while the regulator anticipates the population's reaction and optimizes the policy. The case of a cooperative population has been studied in \cite{Hubert2020}, while in \cite{aurell2020optimal}, a population of selfish agents with contact factor control has been studied.

An example of a deterministic optimal control problem for centralized decision making during a pandemic in a society with multiple communicating subpopulations is given in \cite{Gao_2019}. The subpopulations interact over a non-uniform graph. A central planner wants to flatten the (global) curve of infections, leading to the optimal control problem. Sending the number of subpopulations to infinity, we anticipate a limit where each interaction is weighted by a graphon and the limit model would be reminiscent of the interacting system of Kolmogorov equations studied in this paper.

\subsubsection{Networks and Graphons in Epidemiology}

There is a vast body of literature on epidemiology modeling with network interactions. A review of the studies that use idealized networks\footnote{Some examples of idealized networks given in this work are Small-World Networks, Scale-Free Networks, Exponential Random Graph Models.} in epidemiology models can be found in \cite{Keeling_2005}. More closely related to the ideas in this paper, there are recent contributions connecting epidemic models and graphons. In \cite{vizuete2020graphonbased} a sensitivity analysis on the graphon SIS epidemic model is conducted. An infinite dimensional SIS model with application to targeted vaccination is considered in \cite{delmas2021targeted}. The paper \cite{keliger2020localdensity} proposes a model with local-density dependent Markov processes interacting through a graphon structure, and they consider applications to epidemiology. In a similar but more general setting to the SIR model with graphon interaction, \cite{andersson1995limit} studies convergence of a stochastic particle system to an SIR-like system of PDEs with spatial interaction. We note \cite{andersson1995limit} and its continuation \cite{djehiche1995rate,djehiche1998large} may be relevant for a future study of the convergence of $N$-player Nash equilibria to the equilibria in the finite state graphon game. The works mentioned only consider the dynamics of the population without taking the agents' decision making into account.

\subsection{Contributions and Paper Structure}

This paper contributes to the analysis and numerical resolution of finite state graphon games, with applications to epidemiology model. We construct a probabilistic particle model for a continuum of interacting agents and we prove that graphon aggregates must be deterministic (as in \textit{e.g.} \eqref{eq:intro-example-2}) under a set of conditions on the strategies and transition rates. This motivates the study of the asymptotic deterministic model formulation while giving us a transparent interpretation of the agent's control in the applied context. We derive theoretical results about the deterministic model: a verification theorem and an existence theorem for the coupled continuum of forward-backward ordinary differential equations (FBODEs) that characterizes the finite state graphon game at equilibrium are given. We also propose a machine learning method to solve the FBODE system. Finally, we consider an graphon game model for epidemic disease spread. Multiple test cases are solved with the proposed numerical method and the experimental results are discussed.

The outline of the rest of the paper is as follows: In Section~\ref{sec:model}, we introduce the model and analyze its deterministic formulation. In Section~\ref{sec:numerical}, we introduce the numerical approach and give experiment results. In Section~\ref{sec:fubini}, a theoretical framework for the model's probabilistic framework is presented and we rigorously define the graphon game. For the sake of conciseness, the proofs are passed to the appendices.

\section{Model}
\label{sec:model}

\subsection{Setup and Preliminaries}

Let $n\in \mathbb{N}$ and let $E$ be the finite set $\{1,\dots, n\}$. For each $e\in E$ define the difference operator $\Delta_e$ acting on functions on $E$ by the formula $[\Delta_e \phi](e^{\prime}) = \phi(e^{\prime}) - \phi(e)$. We identify the set of probability measures on $E$, $\mathcal{P}(E)$, with the simplex $\Delta(E) := \{x = (x_1,\dots, x_n)\in \mathbb{R}^n_+: \sum_i x_i = 1\}$ and endow it with the Euclidean distance. Throughout the paper, the notation $\mathcal{P}(\cdot)$ will be used to denote the set of Borel probability measures.

Let $T>0$ be a finite time horizon. A process $(f_t)_{t\in[0,T]}$ will be denoted with its bold letter symbol $\boldsymbol f$. Let $\mathcal{C} := C([0,T]; \mathbb{R})$ be the space of continuous real-valued functions from $[0,T]$, $\mathcal{D} := D([0,T]; \mathbb{R})$ be the space of real-valued functions from $[0,T]$ c\`adl\`ag at $t\in [0,T)$ and continuous at $t=T$. We denote the uniform norm by $\|x\|_T := \sup_{s\in[0,T]}|x(s)|$, $x\in \mathcal{D}$. We note that $(\mathcal{C}, \|\cdot\|_T)$ and $(\mathcal{D}, \|\cdot\|_T)$ are both Banach spaces, only the former is separable. Let $\mathcal{D}_E \subset \mathcal{D}$ be the set of functions in $f\in \mathcal{D}$  such that $f([0,T])\subset E$. Since $\mathcal{D}_E$ is a closed subset of $\mathcal{D}$, $(\mathcal{D}_E, \|\cdot\|_T)$ is a complete metric space.

Let $I$ be the unit interval equipped with the Euclidean distance. We denote by $\lambda_I$ and $\mathcal{B}(I)$ the Lebesgue measure and Borel $\sigma$-field on $I$, respectively. The set $I$ is indexing the continuum of players in the graphon game. Troughout the paper, we will employ the notation $\underline \phi := (\phi(x))_{x\in I}$ for functions with domain $I$. Furthermore, in most cases we will denote the index argument with a superscript:
$\phi^x := \phi(x)$. 

This paper studies games with heterogeneous interactions. When the players in the game interact, the weight they give to each others' action is parameterized by their indices. This weighted averaging is captured as the integration with respect to a graphon kernel function (see Section 1 for the discussion). Here, we give the most prevalent definition of a graphon.
\begin{definition}
\label{def:graphon}
    A graphon is a symmetric Borel-measurable function, $w : I\times I \rightarrow [0,1]$. 
\end{definition}
The graphon induces an operator $W$ from $L^2(I)$ to itself: for any $\underline\phi\in L^2(I)$,
\begin{equation}
    \label{eq:graphon-operator}
    [W\underline\phi]^x := \int_I w(x,y)\phi^ydy.
\end{equation}
\begin{remark}
The definition of a graphon varies somewhat in the literature. Some authors require neither symmetry nor a non-negative range. As mentioned in the introduction, the graphon defined as here can be used to represent the limit of a sequence of dense
random graphs as the number of nodes (players) goes to infinity. For example, the constant graphon $w(x,y) = p$ is in a sense the limit of a sequence of Erd\H{o}s-R\'enyi graphs with parameter $p \in [0,1]$. Conversely, random graphs can be sampled from a graphon in at least two different ways: either we sample points in $I$ and construct a weighted graph whose weights are given by the graphon, or we sample points in $I$ and then sample edges with probabilities given by the graphon. 
\end{remark}

A $Q$-matrix with real-valued entries $q_{i,j}$, $i,j\in E$, is an $n\times n$ matrix with non-negative off-diagonal entries such that:
\begin{equation}
    q_{i,i} = -\sum_{j=1: j\neq i}^n q_{i,j},\quad i\in E.
\end{equation}
In this paper, we will consider controlled $Q$-matrices with entries that depend on population aggregates. More specifically, we let for each $x\in I$, $q^x_{i,j}: A \times \mathbb{R} \mapsto \mathbb{R}$, $i,j\in E$, be bounded measurable functions such that $Q^x(\alpha, z) := [q^x_{i,j}(\alpha,z)]_{i,j=1}^n$ is a $Q$-matrix for all $(\alpha,z) \in A\times \mathbb{R}$. We are going to work under the following assumption on the rates $q^x_{i,j}$:
\begin{condition}
\label{eq:cond_on_q}
\begin{enumerate}[label={(\roman*)}]
    \item\label{eq:cond_on_q-bdd}  There is a finite constant $q_{\max}>0$ such that for all $x\in I$, $(i,j)\in E^2$, $a \in A$, and $z\in \mathbb{R}$:
    \begin{equation}
        |q^x_{i,j}(a,z)| \leq q_{\max}.
    \end{equation}
    \item\label{eq:cond_on_q-lip} 
    There is a finite constant $C>0$, possibly depending on $n$, such that for $p = 1,2$ and for all $x\in I$, $t\in[0,T]$, $\alpha\in A$, $(i,j)\in E^2$, and $z,z' \in \mathbb{R}$
    \begin{align*}
    &
    |q^x_{i, k+i}(\alpha, z) - q^x_{j, k+j}(\alpha, z')|
    \leq
    C\left(\mathbf{1}_{\{i\neq j\}} + |z - z'|^p\right).    
\end{align*}
\end{enumerate}
\end{condition}

Since the state space is finite and the rates are assumed to be uniformly bounded in Condition~\ref{eq:cond_on_q}.(i), the H\"older-continuity assumption of Condition~\ref{eq:cond_on_q}.(ii) is less restrictive than might appear at first glance.

\subsection{The Finite State Graphon Games Model for Epidemiology}

In this section, we give a descriptive introduction to the epidemiological graphon game without going in to all the technical details. A rigorous mathematical motivation, built on the theory of Fubini extensions to accommodate for a continuum of independent jump processes, is presented in Section~\ref{sec:fubini}.

On a probability space $(\Omega, \mathcal{F}, \mathbb{P})$, we consider a continuum of $E$-valued pure jump processes $\boldsymbol X^x = (X^x_t)_{t\in [0,T]}$ indexed over $x\in I$. That is, for each $\omega\in \Omega$, $\boldsymbol X^x(\omega)\in \mathcal{D}_E$. The stochastic process $\boldsymbol X^x$ models the state trajectory of player $x$. The initial state $X^x_0$ is sampled from a distribution $p_0^x \in \mathcal{P}(E)$.
Each player implements a strategy $\boldsymbol\alpha^x$, a process taking values in the compact interval $A\subset \mathbb{R}$ described in more detail below. The players interact and each player's state trajectory is potentially influenced by the whole strategy profile $(\boldsymbol\alpha^x)_{x\in I}$. Therefore, in order to emphasize this dependence, we denote the state trajectory of player $x$ as $\boldsymbol X^{\underline{\boldsymbol\alpha},x}$ given a strategy profile $\underline{\boldsymbol\alpha}=(\boldsymbol\alpha^x)_{x\in I}$. For all $x\in I$, $\boldsymbol X^{\underline{\boldsymbol\alpha},x}$ is a $E$-valued pure jump process with rate matrix $Q^x(\alpha^x_t, Z^{\underline{\boldsymbol\alpha},x}_t)$ at time $t\in[0,T]$. The rate matrix is controlled by $\boldsymbol\alpha^x$ and influenced by the population aggregate $\boldsymbol Z^{\underline{\boldsymbol \alpha},x}$. The aggregates we consider are averages weighted by a graphon $w$, more specifically of the form $[W K(\underline\alpha_t, \underline X^{\underline{\boldsymbol\alpha}}_{t-})]$ (cf.  \eqref{eq:graphon-operator}) for some function $K : A\times E \rightarrow \mathbb{R}$. In Section~\ref{sec:fubini} we prove that the aggregate is a deterministic function of time, henceforth we write
\begin{equation}
\label{eq:det-aggregate}
    Z^{\underline{\alpha},x}_t = \int_I w(x,y)\mathbb{E}\left[K(\alpha^y_t, X^{\underline{\alpha},y}_{t-})\right]dy,
\end{equation}
where $w$ is a graphon. 

$K$ will be called the \textit{impact function} since it quantifies how much a player's state or control impacts the aggregate variable. One example is the impact function in the Section~\ref{sec:intro-example} where $K(\alpha,e) = \alpha\mathbf{1}_{(e = \mathsf I)}$, where the interpretation being that the aggregate is the averaged contact factor control of infected players. We have the following assumption on $K$:
\begin{condition}
\label{cond:K_lip}
There exist finite constants $L_K,C_K>0$ such that for all $a,a'\in A$ and $e\in E$:
\begin{equation*}
    |K(a,e) - K(a',e)| \leq L_K |a-a'|,\quad |K(a,e)| \leq C_K.
\end{equation*}
\end{condition}

Our (for now) formal definition of the probabilistic interacting system of players is complete. The rigorous analysis of the system, the construction of a continuum of state trajectories, and conditions under which the aggregate is deterministic, \textit{i.e.}, of the form \eqref{eq:det-aggregate}, is treated in detail in Section~\ref{sec:fubini}. 

The key feature of the graphon game is that the aggregate variable is in general not the same for two distinct players. The players are therefore distinguishable and there is no ``representative agent", as in MFGs\footnote{This is of course not the case if the graphon is constant. In this case, the graphon game is in fact equivalent to an MFG, so there is a representative agent. There are a few more examples of this kind, such as the piecewise constant graphon yielding a game equivalent to a multipopulation MFG with one representative agent for each subpopulation.}. As a direct consequence, there is no flow of player state distributions common to all players. Instead, each player has their own flow. Denote by $p^{\underline{\boldsymbol\alpha},x}(t,e)$  the probability that player $x\in I$ that is in state $e\in I$ at time $t\in [0,T]$, given that the population plays the strategy profile $\underline{\boldsymbol\alpha}$. We shall argue that player $x$'s state distribution flow $\boldsymbol p^{\underline{\boldsymbol\alpha},x}$ solves the Kolmogorov forward equation
\begin{align}
\label{eq:dynamics}
    &\frac{d}{dt}p^{{\underline{\boldsymbol\alpha},x}}(t)
    =
    p^{\underline{\boldsymbol\alpha},x}(t)
    Q^x(\alpha^x_t, Z^{\underline{\boldsymbol\alpha},x}_t),\quad p^{\underline{\boldsymbol\alpha},x}(t) := (p^{\underline{\boldsymbol\alpha},x}(t,e))_{e\in E}
\end{align}
with initial condition $p^{\underline{\boldsymbol\alpha},x}(0) = p_0^x$ and the player's aggregate variable $Z_t^{\underline{\alpha},x}$ is
\begin{equation}
\label{eq:aggregage-Z-alphabar}
    Z^{\underline{\boldsymbol\alpha},x}_t
    =
    \int_I w(x,y) \left(\int_{A\times E} K(\alpha,e) \rho^{\underline{\boldsymbol\alpha},y}_t(d\alpha,de)\right) dy,
\end{equation}
with $\rho^{\underline{\boldsymbol\alpha},y}_t$ being the joint probability law of control and state, $(\alpha^y_t, X^{\underline{\boldsymbol\alpha},y}_{t-})$. 

We turn our focus to the players' actions. We will make three standing assumptions that directly affect what strategies players will choose. The first is that the environment that endogenously 
affects the players (but is known to the players) is varying smoothly over time, with no abrupt changes for example in lockdown penalties or expected recovery time. Secondly, if the players can affect their environment with their control, then the environment varies smoothly with their control too. For example, the risk of infection depends continuously on the agent's level of cautiousness. Finally, the players' strategies are decentralized, \textit{i.e.}, may not be affected by the transition of any agent other than themselves. Under these circumstances, the players have no apparent reason to discontinuously change their action over time except at times of transition between states. Such strategies ($A$-valued; decentralized; continuous in time between changes of the player's own state) will be called admissible and the set of admissible strategies denoted by $\mathbb{A}$. The setting is further discussed in Section~\ref{sec:fubini}.

In this paper, we consider the finite horizon problem where the cost is composed of two components: a running cost and a terminal cost. For each player $x\in I$, the assumptions on the conditions that the running and terminal cost functions,
$f^x : [0,T]\times E\times \mathbb{R}\times A \rightarrow \mathbb{R}$ and $g^x: E\times\mathbb{R}\rightarrow\mathbb{R}$, satisfy are given later in the text together with the theoretical results. The total expected cost to player $x$ for playing the strategy $\boldsymbol\sigma\in \mathbb{A}$ while the population plays the strategy profile $\underline{\boldsymbol \alpha}$ is
\begin{align}
\label{eq:costfunction}
    &\mathcal{J}^x(\boldsymbol\sigma; \underline{\boldsymbol\alpha}) 
    =
    \mathbb{E}\left[\int_0^Tf^x(t, X^{\underline{\boldsymbol\alpha},x}_t, Z^{\underline{\boldsymbol\alpha},x}_t, \sigma_t)dt + g^x(X^{\underline{\boldsymbol\alpha},x}_T, Z^{\underline{\boldsymbol\alpha},x}_T)\right].
\end{align}
As we shall see, a change in player $x$'s control has no effect on the aggregate. Hence, the expected cost depends on the strategy profile only indirectly through the value of the aggregate variable. See Section~\ref{sec:fubini} for the details. Therefore, hereinafter we shall use the notation $J^x(\boldsymbol\sigma; \boldsymbol Z^{\underline{\boldsymbol\alpha},x}) $ for the above right hand side. In light of this, we employ the following definition of a Nash equilibrium in the graphon game:
\begin{definition}
\label{def:def-GGNE}
The strategy profile $\underline{\boldsymbol{\alpha}}$ is a Nash equilibrium if it is admissible and no player can gain from a unilateral deviation, \textit{i.e.}, 
\begin{equation*}
    J^x(\boldsymbol \alpha^x; \boldsymbol{Z}^{\underline{\boldsymbol\alpha},x}) 
    \leq 
    J^x(\boldsymbol\sigma; \boldsymbol{Z}^{\underline{\boldsymbol\alpha},x}),
    \quad \forall x\in I,\ \forall \boldsymbol\sigma\in\mathbb{A}.
\end{equation*}
\end{definition}

\subsection{Analysis of Finite State Graphon Games}

By Definition~\ref{def:def-GGNE}, an admissible strategy profile $\hat{\underline{\boldsymbol{\alpha}}}$ is a Nash equilibrium if there exists an aggregate profile $\hat{\underline{\boldsymbol{Z}}} = (\hat{\boldsymbol Z}^x)_{x\in I}$ such that
\begin{itemize}
    \item for all $x \in I$, $\hat{\boldsymbol{\alpha}}^x$ minimizes $J^x(\cdot; \hat{\boldsymbol{Z}}^x)$;
    \item for all $x \in I$, $\hat{\boldsymbol{Z}}^x$ is the aggregate perceived by player $x$ if the population uses strategy profile $\hat{\underline{\boldsymbol{\alpha}}}$.
\end{itemize}
This alternative formulation has the advantage to split the characterization of the equilibrium into two parts and in the first part, the optimization problem faced by a single agent is performed with while the aggregate is fixed.

With a flow $\boldsymbol{Z}^x$ being fixed, we define the value function of player $x\in I$ as, for $t\in [0,T]$ and $e\in E$,
\begin{align*}
    &u^x(t,e) := 
    \\
    &
    \inf_{\boldsymbol\sigma\in \mathbb{A}}\mathbb{E}\left[\int_t^T f^x(s,X^{\boldsymbol\sigma,\boldsymbol{Z}^x,x}_s, Z^x_s, \sigma_s)dt + g^x(X^{\boldsymbol\sigma,\boldsymbol{Z}^x,x}_T, Z^x_T)\ |\ X^{\boldsymbol\sigma,\boldsymbol{Z}^x,x}_t = e \right]
\end{align*}
where $\boldsymbol X^{\boldsymbol\sigma,\boldsymbol{Z}^x,x}$ is an $E$-valued pure jump process with transition rate matrix $Q^x(\sigma_t, Z^x_t)$ at time $t\in[0,T]$ and initial distribution $p^x_0$. 

To derive optimality conditions, we introduce the Hamiltonian of player $x$:
\begin{equation}
    H^x(t,e, z, h, a) := \overrightarrow{\mathbf{1}_e} Q^x(a,z)h + f^x(t,e,z,a),
\end{equation}
where $\overrightarrow{\mathbf{1}_e}$ is the coordinate (row) vector in direction $e$ in $\mathbb{R}^n$ and $h\in \mathbb{R}^{n}$. We assume that $A \ni \alpha \mapsto H^x(t,e,z,h,\alpha)$ admits a unique measurable minimizer $\hat a^x_e(t,z,h)$ for all $(t,z,h)$ and define the minimized Hamiltonian of player $x$:
\begin{equation}
\label{eq:-gg-Hopt}
    \hat{H}^x(t, e, z, h) := H^x(t,e,z,h,\hat{a}^x_e(t,z,h)).
\end{equation}
The dynamic programming principle of optimal control leads to the HJB equation for $u^x$ that reads 
\begin{equation}
\label{eq:gg-HJB}
    \dot{u}^x(t,e) + \hat{H}^x(t,e,Z^x_t,  u^x(t,\cdot)) = 0, \quad u^x(T,e) = g^x(e, Z^x_T),
\end{equation}
where $\dot{u}^x(t,e)$ denotes the time derivative of ${u}^x(t,e)$. Noting that $\overrightarrow{\mathbf{1}_e} Q(a,z)h = \overrightarrow{\mathbf{1}_e} Q(a,z) \Delta_e h$, \eqref{eq:gg-HJB} can be equivalently written as
\begin{equation}
\label{eq:gg-HJB-Delta}
    \dot{u}^x(t,e) + \hat{H}^x(t,e,Z^x_t, \Delta_e u^x(t,\cdot)) = 0, \quad u^x(T,e) = g^x(e, Z^x_T).
\end{equation}
In the following theorem, we verify that the solution of the HJB equation indeed gives the value function of the infinitesimal agent's control problem, and we provide an expression for an optimal Markovian control in terms of this value function and the aggregate.
\begin{theorem}
\label{theorem:feedback-control}
If $u^x : [0,T]\times E \ni (t,e) \mapsto u^x(t,e)\in \mathbb{R}$ is a continuously differentiable solution to the HJB equation~\eqref{eq:gg-HJB}, then $u^x$ is the value function of the optimal control problem when the flow $\boldsymbol Z^x$ is given. Moreover, the function
\begin{equation}
    \hat{\phi}^x(t,e) = \hat{a}_e^x(t,Z^x_t, u^x(t,\cdot))
\end{equation}
gives an optimal Markovian control.
\end{theorem}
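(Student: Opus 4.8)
The plan is to run the classical verification argument for controlled finite-state pure jump processes, based on a Dynkin-type change of variables formula, exploiting that the aggregate flow $\boldsymbol Z^x$ is fixed and deterministic so that the only randomness comes from the jump process itself. First I would fix an arbitrary admissible strategy $\boldsymbol\sigma\in\mathbb{A}$ with associated process $\boldsymbol X := \boldsymbol X^{\boldsymbol\sigma,\boldsymbol Z^x,x}$ started from $X_t = e$, and apply a change of variables to $s\mapsto u^x(s,X_s)$ on $[t,T]$. Since $E$ is finite and the rates are bounded by $q_{\max}$ (Condition~\ref{eq:cond_on_q}.(i)), the trajectory has finitely many jumps almost surely; combining the fundamental theorem of calculus on the intervals between jumps (where $X_s$ is constant and $u^x$ is $C^1$ in time) with the jump increments yields the decomposition
\[
u^x(T,X_T) = u^x(t,e) + \int_t^T\Big(\dot u^x(s,X_s) + \overrightarrow{\mathbf{1}_{X_s}}Q^x(\sigma_s,Z^x_s)u^x(s,\cdot)\Big)\,ds + M_T - M_t,
\]
where $\boldsymbol M$ is the compensated-jump martingale. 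Boundedness of the rates and of $u^x$ on the compact set $[0,T]\times E$ guarantees that $\boldsymbol M$ is a genuine martingale, hence has zero expectation.

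Next I would bring in the Hamiltonian. By the definition of $H^x$ together with the identity $\overrightarrow{\mathbf{1}_e}Q^x h = \overrightarrow{\mathbf{1}_e}Q^x\Delta_e h$, the integrand above equals $\dot u^x(s,X_s) + H^x(s,X_s,Z^x_s,u^x(s,\cdot),\sigma_s) - f^x(s,X_s,Z^x_s,\sigma_s)$. Since $\hat H^x$ is the minimum of $H^x$ over the action and the HJB equation~\eqref{eq:gg-HJB} enforces $\dot u^x + \hat H^x = 0$, the integrand is bounded below by $-f^x(s,X_s,Z^x_s,\sigma_s)$, with equality precisely when $\sigma_s = \hat a^x_{X_s}(s,Z^x_s,u^x(s,\cdot))$ (using uniqueness of the minimizer). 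Taking expectations, using the terminal condition $u^x(T,\cdot)=g^x(\cdot,Z^x_T)$ and $\mathbb{E}[M_T - M_t]=0$, and rearranging yields $u^x(t,e)\le J^x(\boldsymbol\sigma;\boldsymbol Z^x)$ for the cost started from $(t,e)$. As $\boldsymbol\sigma$ is arbitrary, $u^x(t,e)$ is a lower bound for the value function.

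Finally I would close the gap by selecting the feedback $\hat\phi^x$. Substituting $\sigma_s = \hat\phi^x(s,X_s)$ turns every inequality into an equality, giving $u^x(t,e) = J^x(\hat\phi^x;\boldsymbol Z^x)$, which simultaneously identifies $u^x$ as the value function and shows $\hat\phi^x$ is optimal. The point requiring care here is that this feedback generates a well-defined process in the admissible class: since $\hat a^x_e$ is measurable and $\boldsymbol Z^x, u^x$ are continuous in time, the rate matrix $t\mapsto Q^x(\hat\phi^x(t,e),Z^x_t)$ is bounded and measurable, so a pure jump process with these rates exists and is unique in law, and the induced control is decentralized and $A$-valued.

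I expect the main obstacle to be the rigorous justification of the change of variables formula and the true-martingale property of $\boldsymbol M$ — that is, controlling the compensated jump integral and verifying its expectation vanishes — rather than the Hamiltonian comparison, which is essentially algebraic. The finiteness of $E$ and the uniform bound $q_{\max}$ from Condition~\ref{eq:cond_on_q}.(i) are exactly what make the integrability and martingale steps go through, and the $C^1$ regularity assumed of $u^x$ is what legitimizes differentiating $u^x(s,X_s)$ between jumps.
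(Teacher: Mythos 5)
Your proposal is correct and follows essentially the same route as the paper's proof: a classical verification argument that decomposes $u^x(s,X_s)$ plus the accumulated running cost into a zero-mean martingale and a drift given by the Hamiltonian gap $H^x(\cdot,\sigma_s)-\hat H^x(\cdot)$, which is nonnegative and vanishes exactly when $\sigma_s$ equals the feedback minimizer, so that taking expectations and using the terminal condition yields optimality of $\hat\phi^x$. The differences are only cosmetic: you start the process at an arbitrary $(t,e)$ instead of at time $0$ with distribution $p_0^x$, and you spell out the between-jumps fundamental-theorem-of-calculus step and the martingale integrability (via $q_{\max}$ and finiteness of $E$) that the paper leaves implicit.
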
    

Next, we prove the existence of a solution to the coupled Kolmogorov-HJB system at equilibrium. For that purpose we place the following condition:
\begin{condition}
\label{cond:cond_differentiability}
\begin{enumerate}[label={(\roman*)}]
    \item \label{cond:diff_q} 
    There exist two functions $Q_1$ and $Q_2$ with $Q_2$ locally Lipschitz such that $Q(a,z) = Q_1(z) + aQ_2(z)$ for all $a \in A, z \in \mathbb{R}$.
    \item \label{cond:diff_f} 
    $a \mapsto f^x(t,e,z,a)$ is continuously differentiable, and strongly convex uniformly in $(t,e,z)$ with constant $\lambda$; $(t,z) \mapsto \partial_a f^x(t,e,z,a)$ is locally Lipschitz continuous, uniformly in $a \in A, e \in E$.  
    \item \label{cond:bound_f_g} $f$ and $g$ are uniformly bounded and $z \mapsto f^x(t,e,z,a)$ is continuous.
\end{enumerate}
\end{condition}    
    
As a consequence of Condition~\ref{cond:cond_differentiability}.\ref{cond:diff_q} and~\ref{cond:cond_differentiability}.\ref{cond:diff_f} $a \mapsto H^x(t,e, z, h, a)$ is once continuously differentiable and strictly convex, 
and $(t,z,h) \mapsto \hat a_e(t,z,h)$ is locally Lipschitz continuous (see Lemma~\ref{lem:reg-hat-a} in Appendix~\ref{app:lemma1}). We denote Lipschitz constant of $[-c,c]\ni z \mapsto \hat{a}(t,z,h)$ by $L_{\hat a}(c)$, which can be bounded from above using smoothness properties of $Q^x$ and $f^x$. More specifically, $L_{\hat a}$ depends on the local Lipschitz coefficients of $z \mapsto \partial_a Q^x(a,z)$ and $(t,z,h) \mapsto \partial_a f^x(t,e,z,a)$, see the proof of Lemma~\ref{lem:reg-hat-a}).
Recall that $C_K$ denotes the uniform upper bound of the impact function $K$ guaranteed by Condition~\ref{cond:K_lip}.
\begin{theorem}
\label{theorem:existence-ode}    
    Assume Condition~\ref{eq:cond_on_q}, \ref{cond:K_lip}, \ref{cond:cond_differentiability} hold. 
    If $\|w\|_{L^2(I\times I)}L_KL_{\hat a}(C_K) < 1$,
    then the coupled HJB--Kolmogorov forward-backward system at equilibrium 
\begin{equation}
\label{eq:FB-hat-phi}
    \left\{
    \begin{aligned}
    &\dot p^x(t, e) = \sum_{e^{\prime} \in E} q^x_{e^{\prime},e}(\hat \phi^x(t, e^{\prime}), Z_t^x) p^x(t, e^{\prime}), \quad \forall e \in E,\ t\in [0,T]\\
    &\dot u^x(t, e) = -\hat H^x(t, e, Z_t^x, \Delta_e u^x(t, \cdot)), \quad \forall e \in E,\ t \in [0,T]\\
    &Z_t^x  = \int_I w(x,y) \left(\sum_{e\in E}K(\hat \phi^y(t,e), e)p^y(t,e)\right)\lambda(dy)
    \\
    &u^x(T,e) = g^x(e, Z^x_T),\quad p^x(0,e) = p^x_0(e), \\
    &\hat{\phi}^x(t,e) = \hat{a}_e^x(t,Z^x_t, u^x(t,\cdot))
    \end{aligned}
    \right.
\end{equation}
admits a bounded solution $(u,p)$ in $C([0,T]; L^2(I\times E)\times L^2(I\times E))$ such that for each $(t,x)\in [0,T]\times I$, $p^x(t, \cdot)$ is a probability mass function on $E$.

\end{theorem}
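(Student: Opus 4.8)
The plan is to recast the coupled system \eqref{eq:FB-hat-phi} as a fixed-point problem for the aggregate profile alone and then run a contraction argument whose binding constant is exactly $\|w\|_{L^2(I\times I)}L_K L_{\hat a}(C_K)$. Concretely, I would work on
\[\mathcal{Z} := \left\{\underline{\boldsymbol Z}\in C([0,T];L^2(I)) : |Z^x_t|\le C_K \text{ for a.e. } x\in I,\ \forall t\in[0,T]\right\},\]
and define a solution operator $\Phi:\mathcal Z\to\mathcal Z$ as follows. Given $\underline{\boldsymbol Z}$, for each fixed $x$ the backward equation in \eqref{eq:FB-hat-phi} decouples into a single ODE system on $\mathbb R^n$; since $f^x,g^x$ are bounded (Condition~\ref{cond:cond_differentiability}.\ref{cond:bound_f_g}) and $h\mapsto \hat H^x(t,e,z,h)$ is globally Lipschitz (the rates are bounded by $q_{\max}$, Condition~\ref{eq:cond_on_q}.\ref{eq:cond_on_q-bdd}, and a minimum of affine maps is Lipschitz), Cauchy--Lipschitz yields a unique bounded $C^1$ solution $u^x$, which by Theorem~\ref{theorem:feedback-control} is the value function. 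I then set $\hat\phi^x(t,e)=\hat a^x_e(t,Z^x_t,u^x(t,\cdot))$ and solve the forward Kolmogorov equation, which is \emph{linear} in $p^x$ once $\hat\phi^x$ is frozen and whose generator is a genuine $Q$-matrix, so it has a unique solution remaining in the simplex $\Delta(E)$ for all $t$. Finally I set $\Phi(\underline{\boldsymbol Z})^x_t := \int_I w(x,y)\sum_{e\in E}K(\hat\phi^y(t,e),e)p^y(t,e)\,dy$, so that a fixed point of $\Phi$ is precisely a solution of \eqref{eq:FB-hat-phi}.

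Two a priori facts make this well-posed. First, since $|K|\le C_K$, $w\in[0,1]$ and $\sum_e p^y(t,e)=1$, the output obeys $|\Phi(\underline{\boldsymbol Z})^x_t|\le C_K$, so $\Phi$ maps $\mathcal Z$ into itself; this confines every aggregate to $[-C_K,C_K]$, which is exactly why the \emph{local} Lipschitz constant $L_{\hat a}(C_K)$, rather than a global one, is the relevant quantity. Second, writing $M^y_t:=\sum_e K(\hat\phi^y(t,e),e)p^y(t,e)$ one has $\Phi(\underline{\boldsymbol Z})_t = W\underline{M}_t$ with $W$ the graphon operator \eqref{eq:graphon-operator}, and the Hilbert--Schmidt bound gives $\|W\|_{\mathrm{op}}\le \|w\|_{L^2(I\times I)}$, which is where that factor enters.

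For the contraction estimate I would take two profiles $\underline{\boldsymbol Z},\underline{\boldsymbol Z}'$, run both through $\Phi$, and decompose
\[M^y_t-M'^y_t=\sum_e\big(K(\hat\phi^y)-K(\hat\phi'^y)\big)p^y+\sum_e K(\hat\phi'^y)\big(p^y-p'^y\big).\]
The \emph{instantaneous} channel is the first sum: using $L_K$-Lipschitzness of $K$ (Condition~\ref{cond:K_lip}), $\sum_e p^y=1$, and the $L_{\hat a}(C_K)$-Lipschitz dependence of $\hat a^y_e$ on its $z$-argument (Lemma~\ref{lem:reg-hat-a}), it is bounded at leading order by $L_K L_{\hat a}(C_K)|Z^y_t-Z'^y_t|$; applying $\|W\|_{\mathrm{op}}\le\|w\|_{L^2(I\times I)}$ reproduces the target constant $\|w\|_{L^2(I\times I)}L_KL_{\hat a}(C_K)$ acting on $\|\underline{\boldsymbol Z}_t-\underline{\boldsymbol Z}'_t\|_{L^2(I)}$. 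What remains are the \emph{non-local-in-time} contributions: the dependence of $\hat\phi^y$ on $u^y$, hence on \emph{future} values of $Z^y$ through the backward HJB, and the dependence of $p^y$ on the \emph{past} of $Z^y$ through the forward Kolmogorov equation. I would control these by standard Gronwall stability estimates for the two ODE systems and absorb them into the contraction by working in a weighted norm $\sup_t e^{\lambda(T-t)}\|\cdot\|$ (resp. $e^{-\lambda t}$) with $\lambda$ large, so that the effective constant is $\|w\|_{L^2(I\times I)}L_KL_{\hat a}(C_K)+O(1/\lambda)$; under the hypothesis $\|w\|_{L^2(I\times I)}L_KL_{\hat a}(C_K)<1$ one then picks $\lambda$ to push this strictly below $1$. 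Banach's fixed-point theorem yields $\underline{\boldsymbol Z}^\ast\in\mathcal Z$, from which $(u,p)$ are reconstructed; continuity in $t$ with values in $L^2(I\times E)$, the uniform bounds, and $p^x(t,\cdot)\in\Delta(E)$ follow from the sub-problem analysis, modulo checking joint measurability in $x$ of all per-player objects so that the graphon integrals define genuine $L^2(I)$ functions.

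The step I expect to be the real obstacle is reconciling this forward--backward (non-local-in-time) coupling with the sharp, $T$-independent constant in the hypothesis: a naive Lipschitz estimate of $Z\mapsto u$ and $Z\mapsto p$ carries $e^{CT}$-type factors, and the crux is to show these land only in the lower-order $O(1/\lambda)$ remainder rather than in the binding constant $\|w\|_{L^2(I\times I)}L_KL_{\hat a}(C_K)$. If the weighted-norm absorption cannot be pushed through for arbitrary $T$, the fallback is a Schauder fixed-point argument for existence, using boundedness of $\dot{\underline{\boldsymbol Z}}$ to obtain equicontinuity in $t$ and hence compactness of $\Phi(\mathcal Z)$, with the smallness condition retained to guarantee that the instantaneous self-consistency map is well defined and contractive at each time.
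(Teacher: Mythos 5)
Your primary plan --- recasting \eqref{eq:FB-hat-phi} as a single Banach fixed point for the aggregate and absorbing the forward--backward nonlocality into an exponentially weighted norm --- has a genuine gap at exactly the step you yourself flag as the crux. The weighted-norm device only works for one-sided (Volterra-type) nonlocality: with weight $e^{-\lambda t}$ the causal contribution $Z\mapsto p$ gains a factor $1/\lambda$, but the anticausal contribution $Z\mapsto u$ through the backward HJB (which satisfies an estimate of the form $|u(t)-u'(t)|\le C\int_t^T|Z_s-Z'_s|\,ds$) is then amplified by a factor of order $e^{\lambda(T-t)}$; symmetrically, the weight $e^{-\lambda(T-t)}$ tames the backward channel and blows up the forward one. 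No single weight handles both directions, so the non-instantaneous contributions are not $O(1/\lambda)$ uniformly in $T$. This is the standard obstruction that forces forward--backward (FBSDE/FBODE) theory to resort to small time, monotonicity, or compactness rather than plain contraction; as written, your contraction does not close, and the hypothesis $\|w\|_{L^2(I\times I)}L_KL_{\hat a}(C_K)<1$ cannot be turned into a $T$-independent contraction constant for the full map $Z\mapsto\Phi(Z)$.

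The paper avoids this with a different decomposition, which your fallback gestures at but does not pin down. The smallness constant is used \emph{only} for an inner, effectively time-local fixed point: one freezes a pair $(u,p)$ in a bounded convex set of $C([0,T];L^2(I\times E)\times L^2(I\times E))$ and defines the aggregate as the unique fixed point of $Z\mapsto\bigl(\int_I w(x,y)\sum_{e}K(\hat a(t,e,Z^y_t,u^y(t,\cdot)),e)\,p^y(t,e)\,dy\bigr)_{t,x}$. Since $(u,p)$ is frozen, no forward--backward coupling enters this map, and your ``instantaneous channel'' estimate is exactly its contraction estimate, with constant $\|w\|_{L^2(I\times I)}L_KL_{\hat a}(C_K)$. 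The outer forward--backward loop $(u,p)\mapsto(\hat u,\hat p)$ (solve Kolmogorov, then HJB, against that fixed aggregate) is then closed by \emph{Schauder}, not Banach: equicontinuity is immediate because $\dot{\hat p}$ and $\dot{\hat u}$ are bounded directly by the equations (Conditions~\ref{eq:cond_on_q}.\ref{eq:cond_on_q-bdd} and \ref{cond:cond_differentiability}.\ref{cond:bound_f_g}), plus a continuity check of the composed map, so no $e^{CT}$ factor ever has to be beaten. Note also that your fallback, with $Z$ as the outer Schauder variable, does not work as stated: equicontinuity of $\Phi(\mathcal Z)$ does not follow from ``boundedness of $\dot{\underline{\boldsymbol Z}}$,'' because the output's modulus of continuity in $t$ inherits the input's through $\hat a(t,e,Z^y_t,\cdot)$, and inputs in your $\mathcal Z$ are merely continuous. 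You would either have to work on a Lipschitz-in-time invariant subset (closing that Lipschitz bound again uses the same smallness constant), or switch the outer unknown to $(u,p)$ --- at which point your fallback essentially becomes the paper's proof.
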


\subsection{The Finite State Graphon Game for SIR with Contact Factor Control}
\label{sec:example1}

Here, we introduce a model which we shall use as a test bed for the numerical algorithm presented in Section~\ref{sec:numerical}.
It is inspired by the first example scenario in \cite{aurell2020optimal} and builds on the case discussed in Section~\ref{sec:intro-example}. It is a compartmental model with four possible states: $(\mathsf S)$usceptible, $(\mathsf I)$nfected, $(\mathsf R)$ecovered and $(\mathsf D)$eceased. The agents choose their level of contact factor. A regulator (government or health care authority) recommends state-dependent contact factor levels to the agents, denoted by $\boldsymbol\lambda^{(e)}$, $e\in \{\mathsf{S}, \mathsf{I}, \mathsf{R}\}$. For enforcement purposes, it also sets penalties for deviation from these levels. The cost has 3 components: The first component penalizes the agent for not following the regulator's recommended contact factor level, the second one is the cost of treatment for an infected agent (this cost can be player specific due to individual differences in health care plan coverage, etc.),
and the last one is the cost for being deceased.
In this setting, the running cost is written as
\begin{equation}
\begin{aligned}
\label{eq:SIR_cost}
    f^x(t,e,z,\alpha) 
    =
    \frac{c_\lambda}{2}\left(\lambda^{(\mathsf S)}(t) - \alpha\right)^2 \mathbf{1}_{(e = \mathsf S)}
    + \frac{1}{2}\left(\lambda^{(\mathsf R)}(t)-\alpha\right)^2 \mathbf 1_{(e = \mathsf R)}&
    \\
    + \left(\frac{1}{2}\left(\lambda^{(\mathsf I)}(t) - \alpha\right)^2 + c_I(x)\right)\mathbf 1_{(e = \mathsf I)} 
    + c_D(x) \mathbf 1_{(e = \mathsf D)},&
\end{aligned}
\end{equation}
where $c_I$ and $c_D$ are nonnegative costs functions (of the player index).
We set the terminal cost to be identically zero, $g^x(e,z) = 0$ for all $(e,z)\in E \times \mathbb{R}$. The transition rate matrix for player $x$ is given as: 
\begin{equation}
\label{eq:SIR-Qmatrix}
    Q^x(\alpha,z)
    =
\begin{blockarray}{ccccc}
    \begin{block}{ccccc}
      & \mathsf S & \mathsf I & \mathsf R & \mathsf D \\
     \end{block}
    \begin{block}{c[cccc]}
    &&&&\\[-1.5mm]
    \mathsf S &\cdots & \beta(x)\alpha z & 0 & 0
    \\
    \mathsf I &0 & \cdots & \rho(x)\gamma(x) 
    & (1-\rho(x))\gamma(x)
    \\
    \mathsf R &\kappa(x)  & 0 & \cdots 
    & 0
    \\
    \mathsf D & 0 & 0 & 0 & \cdots\\
    &&&&\\[-2mm]
    \end{block}
\end{blockarray},
\end{equation}
where $\beta, \gamma, \kappa,\rho$ are nonnegative parameter functions, $0\leq \rho\leq 1$, determining the rates of infection, recovery, reinfection, and decease, and where in each row, the diagonal entry is the negative of the sum of the other terms on the same row. In line with the discussion in Section~\ref{sec:intro-example}, the transition rate from state $\mathsf{S}$ to $\mathsf{I}$ depends on the agent's own decision and the aggregate variable\footnote{The rate is presented in \eqref{eq:SIR-Qmatrix} as $\beta(x)\alpha z$. For an arbitrary impact function $K$ this would lead to a violation of Condition~\ref{eq:cond_on_q}.\ref{eq:cond_on_q-bdd}. Here we are however considering contact factor control and aggregates of the form \eqref{eq:aggregage-Z-alphabar}, which are bounded by  Condition~\ref{cond:K_lip} and the compactness of $A$. The $Q$-matrix \eqref{eq:SIR-Qmatrix} can be modified without changing the model so that Condition~\ref{eq:cond_on_q}.\ref{eq:cond_on_q-bdd} is satisfied; we defer from this for the sake of presentation.}. Furthermore, when an infected agent transitions, she goes to state $\mathsf{R}$ with probability $\rho$ and to state $\mathsf{D}$ with probability $(1-\rho)$. 
Then, for player $x$ the optimality conditions yield
\begin{align*}
&\hat\phi^x(t,\mathsf S) = \lambda^{(\mathsf S)}(t) + \frac{\beta(x)}{c_\lambda}Z^x_t(u^x(t,\mathsf S) - u^x(t,\mathsf I))
\\
&\hat \phi^x(t,\mathsf I) = \lambda^{(\mathsf I)}(t)
\\
&\hat\phi^x(t,\mathsf R) = \lambda^{(\mathsf R)}(t)
\end{align*}
and the forward-backward graphon ODE system reads:
\begin{align*}
    &\dot p^x(t, \cdot) = p^x(t, \cdot)Q^x(\hat\phi^x(t, \mathsf{S}), Z^x_t)
    \\
    &\dot u^x(t,\mathsf S)= \beta(x)\hat\phi^x(t,\mathsf S)Z^x_t\big(u^x(t,\mathsf S) - u^x(t,\mathsf I)\big) -\frac{c_\lambda}{2} \big(\lambda^{(\mathsf S)}(t)-\hat \phi^x(t, \mathsf S)\big)^2,
    \\
    &\dot u^x(t,\mathsf I)
    = \rho(x)\gamma(x)\left(u^x(t,\mathsf I) - u^x(t,\mathsf R)\right) 
    + (1-\rho(x))\gamma(x)\left(u^x(t,\mathsf I) - u^x(t,\mathsf D)\right)- c_I(x),
    \\
    &\dot u^x(t,\mathsf R)= \kappa(x)\left(u^x(t,\mathsf R) - u^x(t,\mathsf S)\right),
    \\
    &\dot u^x(t,\mathsf D)= -c_D(x),
    \\
    & u^x(T,e)= 0,\quad p^x(0,e) = p^x_0(e),\quad e\in \{\mathsf{S}, \mathsf{I}, \mathsf{R}\},
    \\
    & Z^x_t= \int_I w(x,y)\hat \phi^y(t,\mathsf I)p^y(t,\mathsf I) dy,\quad t\in[0,T],\ x\in I.
\end{align*}

We note that with a careful choice $\beta(x)$, $c_\lambda$, and $(\boldsymbol \lambda^{(e)})_{e\in E}$ this system will satisfy the sufficient condition for existence from Theorem~\ref{theorem:existence-ode}.

\section{Numerical Approach}
\label{sec:numerical}
We rewrite the continuum of FBODEs \eqref{eq:FB-hat-phi} as the solution of a minimization problem: minimize
\begin{equation}
\label{eq:J-NN-initonly}
    \mathbb{J}(\theta) 
    = 
    \int_{I} \sum_{e \in E} |u^x_{\theta}(T,e) - g^x(e, Z_{\theta,T}^x)|^2 dx,
\end{equation}
where $(p_{\theta}, u_{\theta})$ solve the forward-forward continuous system of ODEs:
\begin{equation}
    \left\{
    \begin{aligned}
    &\dot p^x_{\theta}(t, e) 
    = 
    \sum_{e^{\prime} \in E} q^x_{e^{\prime},e}\big(Z_{\theta,t}^x, \hat a^x(t, e^{\prime}, Z^x_{\theta,t}, u^x_{\theta}(t,\cdot))\big) p^x_{\theta}(t, e^{\prime}), 
    \\
    &\dot u^x_{\theta}(t, e) = -\hat H^x(t, e, Z_{\theta,t}^x, \Delta_e u^x_{\theta}(t, \cdot)),
    \\
    &Z_{\theta,t}^x  = \int_I w(x,y) \left(\sum_{e \in E} K\big(\hat a^y(t, e, Z^y_{\theta,t}, u^y_{\theta}(t, \cdot)\big), e \big) p^{y}_{\theta}(t,e) \right)\lambda(dy) 
    \\
    &u^x_{\theta}(0,e) = \varphi^x_{\theta}(e), \quad p^x(0,e) = p^x_0(e), \quad e \in E,\ t\in [0,T]\ .
    \end{aligned}
    \right.
\end{equation}
This ``shooting'' strategy is reminiscent of the one used \textit{e.g.} in~\cite{MR1766421,MR2963805,MR3738664} for stochastic optimal control problem and \textit{e.g.} in~\cite{carmona2019convergenceFinite,aurell2020optimal} for mean field games in a numerical context. However, here we deal with a continuum of ODEs rather than a finite number of stochastic differential equations. Here $\theta$ is the (unknown) parameter in the function $\varphi$ replacing the initial condition of $u$. Typically, $\theta$ is a real-valued vector of dimension the number of degrees of freedom in the parametric function $\varphi$. See, \textit{e.g.},~\cite{carmona2021convergenceErgodic} for a description of the feedforward neural network architecture we used in the implementation.

Our strategy to find $\theta$ is to run a gradient-descent based method. To alleviate the computation cost and to introduce some randomness, at each iteration we replace the above cost $\mathbb{J}$ by an empirical average over a finite set of indices $x$, which is also used to approximate the value of the aggregate quantities. More precisely, for a finite set $\mathbf{S}$ of indices, we introduce
\begin{equation}
\label{eq:J-NN-Sample-initonly}
    \mathbb{J}^N(\theta,\mathbf{S}) 
    =  
    \frac{1}{N}\sum_{x \in \mathbf{S}} \sum_{e \in E} |u^x_{\theta,\mathbf{S}}(T,e) - g^x(e, Z_{\theta,\mathbf{S},T}^x)|^2,
\end{equation}
where $(p_{\theta,\mathbf{S}}, u_{\theta,\mathbf{S}})$ solves the forward-forward (finite) system of ODEs:
\begin{equation}
\label{eq:FFODE-finite-initonly}
    \left\{
    \begin{aligned}
    &\dot p^x_{\theta,\mathbf{S}}(t, e) 
    = 
    \sum_{e^{\prime} \in E} q^x_{e^{\prime},e}( Z_{\theta,\mathbf{S},t}^x, \hat a^x(t, e^{\prime}, Z^x_{\theta,t}, u^x_{\theta,\mathbf{S}}(t,\cdot))\big) p^x_{\theta,\mathbf{S}}(t, e^{\prime}), 
    \\
    &\dot u^x_{\theta,\mathbf{S}}(t, e) = -\hat H^x(t, e, Z_{\theta,\mathbf{S},t}^x, \Delta_e u^x_{\theta,\mathbf{S}}(t, \cdot)), 
    \\
    &Z_{\theta,\mathbf{S},t}^x  = \frac{1}{N}\sum_{y \in \mathbf{S}} w(x,y) \left(\sum_{e\in E} K\Big(\hat a^y(t, e, Z^y_{\theta,t}, u^y_{\theta,\mathbf{S}}(t,\cdot)\big),e\Big) p^{y}_{\theta,\mathbf{S}}(t,e) \right),
    \\
    &u^x_{\theta,\mathbf{S}}(0,e) = \varphi^x_{\theta}(e), \quad p^x(0,e) = p^x_0(e), \quad e \in E,\ x \in \mathbf{S}.
    \end{aligned}
    \right.
\end{equation}

\begin{algorithm}
\caption{DeepGraphonGame Algorithm\label{algo:DGG}}
\textbf{Input:} Initial parameter $\theta^{(0)}$; number of iterations $K$; sequence $(\beta_k)_{k = 0, \dots, K-1}$ of learning rates; coefficients of the problem; a number of indices $N$

\textbf{Output:} Approximation of $\theta^*$ minimizing $\mathbb{J}^N$, see~\eqref{eq:J-NN-initonly}
\begin{algorithmic}[1]
\FOR{$k = 0, 1, 2, \dots, K-1$}
    \STATE{Sample $\mathbf{S} = (x^i)_{i \in \llbracket N \rrbracket}$}
    \STATE{Solve the FFODE finite system~\eqref{eq:FFODE-finite-initonly} with current parameters $\theta^{(k)}$}
    \STATE{Compute the gradient $\nabla_{\theta} \mathbb{J}^N(\theta^{(k)},\mathbf{S})$ of $\mathbb{J}^N(\theta^{(k)},\mathbf{S})$ defined in~\eqref{eq:J-NN-Sample-initonly} }
    \STATE{Set $\theta^{(k+1)} =  \theta^{(k)} -\beta_k \nabla_{\theta} \mathbb{J}^N(\theta^{(k)},\mathbf{S})$ }
\ENDFOR
\RETURN $\theta^{(K)}$
\end{algorithmic}
\end{algorithm}

\subsection{Piecewise Constant Graphon}
Let $m^1, m^2,\dots, m^K$ be non-negative numbers such that $\sum_{k=1}^K m^k =1$.
We divide the player population into $K$ groups, $B^1,\dots, B^K$, placing all players with index $x\in [0,m^1)$ into group $B^1$, etc. We assume that players belonging to the same group are indistinguishable. For example, all players within a group must have the same recovery rate $\gamma$ and if $x,x'\in I$ are the indices of two players in the same group, then $w(x,y) = w(x',y)$ for all $y\in I$. In this situation, we only need to specify the graphon's values on each block of indices corresponding to a group, since the graphon is a constant on each block. Let us identify the group $B^i$ with its index block (or set). We can compactly represent the interaction weights between the block with a connection matrix $[w_{ij}]_{i,j=1}^K$, where $w_{ij}$ is the connection strength between players in block $B^i$ and players in block $B^j$. Then, for all players $x$ in block $B^i$
\begin{equation}
    Z_t^x =  \int_I w(x,y)\phi^y(t,\mathsf I)p^y(t,\mathsf I)dy = \sum_{k=1}^K w_{ik} \lambda^{(\mathsf{I},k)} p^k(t,\mathsf{I}) m^k,
\end{equation}
which is constant over $x\in B^i$. This is a feature of that we sometimes see when the piecewise constant graphon is used. Furthermore, we assume $\beta$, $\gamma$, $\kappa$ constant over each block but can differ between blocks. It opens up the possibility for us to solve the graphon game with classical numerical methods\footnote{These methods are referring to solving an ODE system similar to the one given in Section~\ref{sec:example1} for a finite (and possibly a small) number of $x$'s.} and a way of evaluating the \texttt{DeepGraphonGame} algorithm.

Turning to the remaining example set up, we specify the cost structure as a particular case of the general formulation of Section~\ref{sec:example1}. We assume that the regulator has set $\boldsymbol\lambda^{(\mathsf{S},k)}, \boldsymbol\lambda^{(\mathsf{I},k)}, \boldsymbol\lambda^{(\mathsf{R},k)}$, $k=1,\dots, K$, different for each block.

In this scenario, we first study the policy effects on the death ratio in the age groups. The policies compared are no lockdown; quarantine for infected; age specific lockdown; full lockdown. We can see in Figure~\ref{fig:ageblocks} that the death ratio decreases nearly 30\% if infected individuals are quarantined, compared to no lockdown. Furthermore, if an age specific lockdown is implemented, we see that even more lives are saved while not deteriorating the economy. Zooming in on the comparison of no lockdown and quarantine for infected (second row of Figure~\ref{fig:ageblocks}), we note that susceptible individuals are using a smaller contact factor when there is no quarantine in place. They optimize their risk and hence want to be more cautious, since the risk of getting infected is higher. 
\begin{table}[ht]
\small
\centering
\begin{tabular}{l|cccc}
Age   & \multicolumn{1}{l}{0-20} & \multicolumn{1}{l}{20-45} & \multicolumn{1}{l}{45-65} & \multicolumn{1}{l}{65+}  \\ 
\hline
0-20  & 1.0   & 0.9   & 0.8   & 0.7  \\
20-45 & 0.9   & 0.9   & 0.8   & 0.8  \\
45-65 & 0.8   & 0.8   & 0.9   & 0.8  \\
65+   & 0.7   & 0.8   & 0.8   & 0.8    
\end{tabular}\hskip1cm
\begin{tabular}{l|ccccc}
Age   & \multicolumn{1}{c}{$\beta$} & \multicolumn{1}{c}{$\gamma$} & \multicolumn{1}{c}{$\rho$} & \multicolumn{1}{c}{$p_0(\mathsf S)$}  & \multicolumn{1}{c}{$m$}\\ 
\hline
0-20  & 0.4   & 0.1   & 1.0   & 0.95  &  0.27\\
20-45 & 0.3   & 0.1   & 1.0   & 0.97  &  0.33\\
45-65 & 0.3   & 0.05  & 0.9   & 0.97  &  0.27\\
65+   & 0.3   & 0.05  & 0.75  & 0.97  &  0.13
\end{tabular}
\caption{\small Parameters of the experiment with different age groups}
\end{table}

\begin{figure}[H]
\centering
\begin{subfigure}{\textwidth}
\centering
    \includegraphics[width=0.6\linewidth]{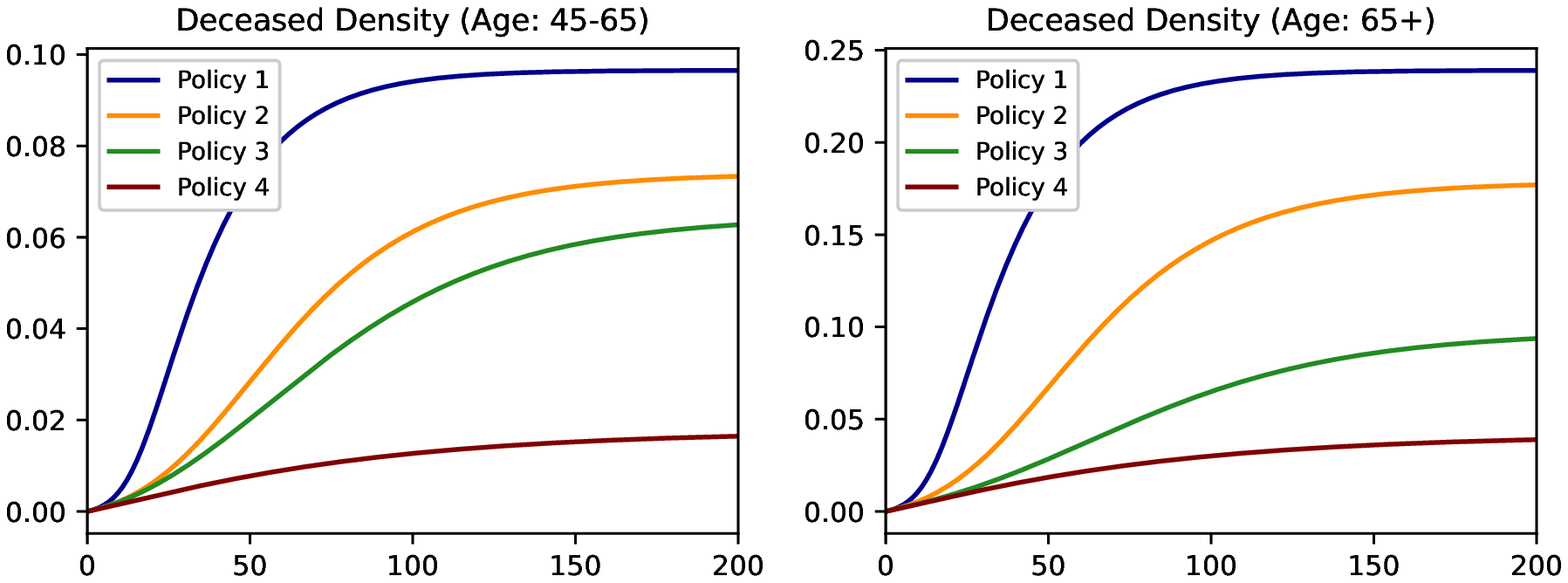}
\end{subfigure}
\begin{subfigure}{\textwidth}
\centering
    \includegraphics[width=\linewidth]{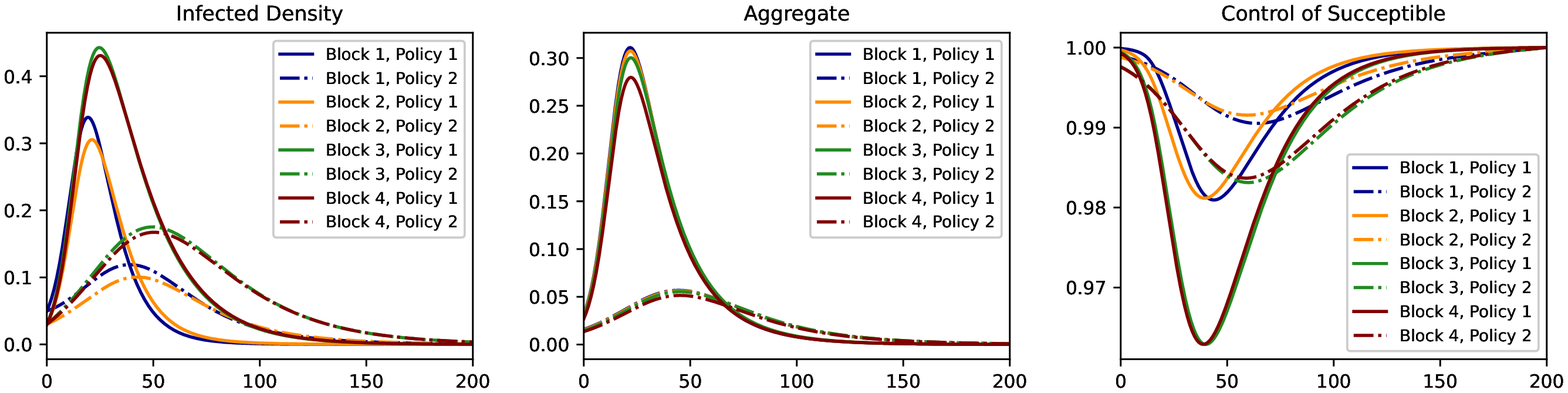}
\end{subfigure}
\caption{\small \textbf{Top:} Density of deceased people for age groups 45-65 (left) and 65+ (right) under different policies. \textbf{Bottom:} Comparison Plots for Policy 1 (no lockdown) and Policy 2 (qurantine for infected): Density of infected people (left), Aggregate $\boldsymbol Z$ (middle), Control of susceptible people (right) are plotted for each 4 blocks under both policies.}
\label{fig:ageblocks}
\end{figure} 

Secondly, in the same scenario, we model multiple cities with different attributes and study the effects of the travel restrictions. In this experiment, we compare a universal no-travel policy to the policy where traveling in or out of one of the cities is restricted (totaling four policies in the comparison). City 1 is a highly populated city with a more contagious virus variant, city 2 also has this variant; however, it is a small city. City 3 is a highly populated city but with a less contagious virus variant. For visual simplification, we assume that there are no deaths (\textit{i.e.} $\rho=0$). In Figure~\ref{fig:cityblocks}, we can see that the infected-density curve can be flattened the most if city 1 has travel restrictions. The reason is the existence of the more contagious variant and the large size of the city 1. We note that when this restriction is implemented the susceptible individuals feel relieved and increase their contact factor control.

\begin{table}[ht]
\small
\centering
\begin{tabular}{c|ccc}
Block    & \multicolumn{1}{c}{City 1} & \multicolumn{1}{c}{City 2} & \multicolumn{1}{c}{City 3} \\ 
\hline
City 1   & 0.3   & 0.3   & 0.3  \\
City 2   & 0.3   & 1.0   & 0.7  \\
City 3   & 0.3   & 0.7   & 1.0  \\
\end{tabular}\hskip10mm
\begin{tabular}{l|ccc}
Block   & \multicolumn{1}{c}{$\beta$} & 
\multicolumn{1}{c}{$p_0(\mathsf S)$} & 
\multicolumn{1}{c}{$m$} \\ 
\hline
City 1 & 0.4    & 0.95 &0.4\\
City 2 & 0.4    & 0.95 &0.2\\
City 3 & 0.3    & 0.95 &0.4
\end{tabular}
\caption{\small Connection matrix (left) and Parameters (right) used in the experiment with different cities. The connection matrix shown here is the case where there is a lockdown in City 1. When there is no lockdown the interaction weights are as follows: $w(\text{1, 1})= 1.0$, $w(\text{1, 2})= 0.9$ and $w(\text{1, 3})= 0.8$.}
\end{table}

\begin{figure}[H]
\centering
\begin{subfigure}{0.24\textwidth}
\centering
    \includegraphics[width=\linewidth]{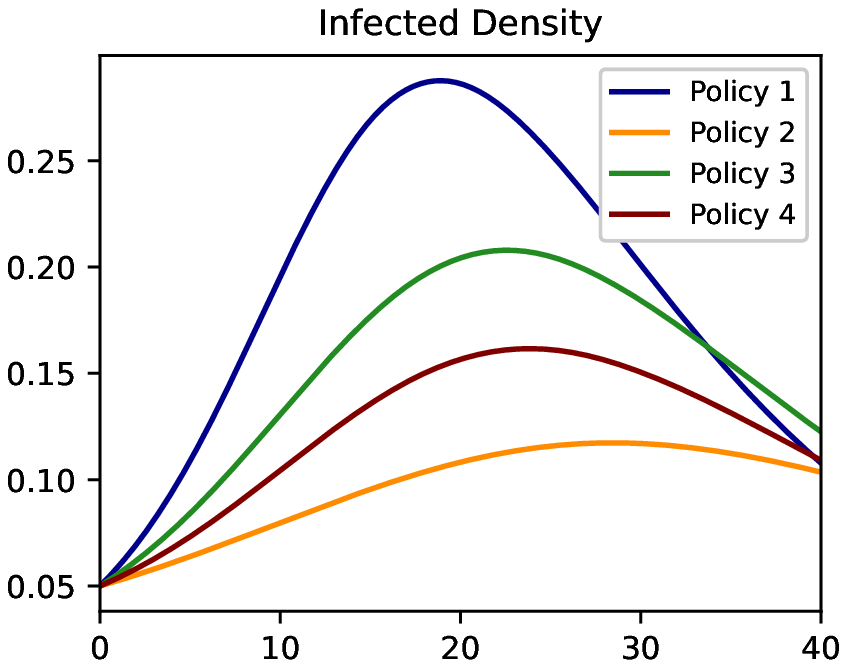}
\end{subfigure}
\begin{subfigure}{0.73\textwidth}
\centering
    \includegraphics[width=\linewidth]{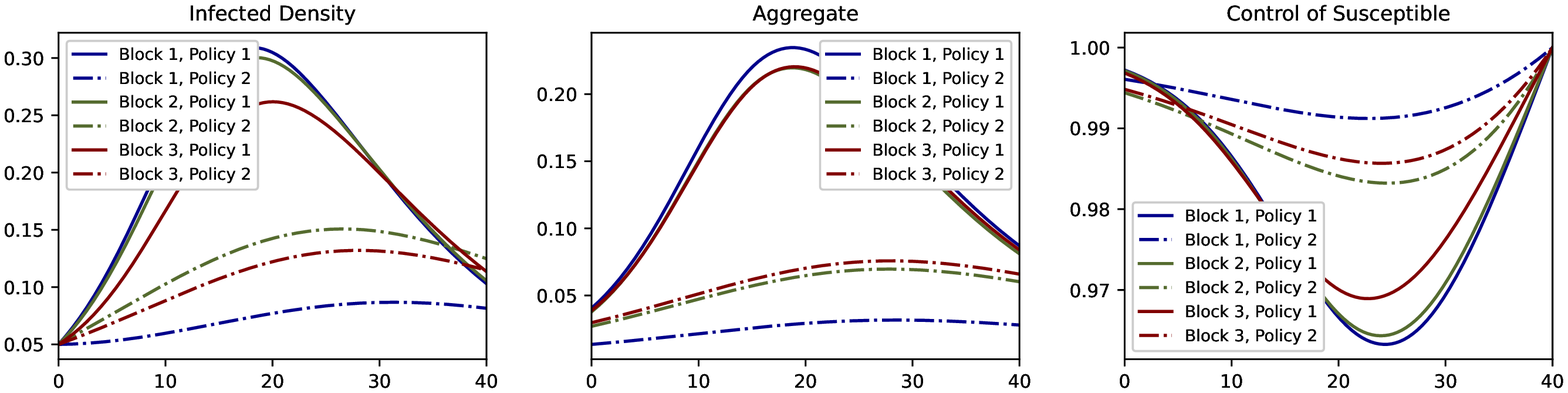}
\end{subfigure}
\caption{\small Density of infected people in the whole population (including all cities) under 4 different policies (left). Comparison Plots for Policy 1 (no lockdown) and Policy 2 (City 1 lockdown): Density of infected people in each city (\textit{i.e.} blocks) (middle left), Aggregate $\boldsymbol Z$ in each city (middle right), Control of susceptible people in each city (right).}
\label{fig:cityblocks}
\end{figure}

\subsubsection{Sanity Check for the Numerical Approach}

Here, we test the \texttt{DeepGraphonGame} algorithm by comparing its solution to the solution obtained by solving the ODE system for the cities-example when city 1 has travel restrictions. As can be seen in Figure \ref{fig:nnVSode}, the \texttt{DeepGraphonGame} algorithm approximates the exact result well. A plot of the function $x\mapsto u^x(0,\mathsf S)$, where $u^x$ is the numerically computed value function, can be seen on the right side of the bottom row in Figure~\ref{fig:nnVSode}. We can clearly see that agents in the same block have the same $u^x(0,\mathsf S)$ values. From this we infer that the \texttt{DeepGraphonGame} algorithm is preforming well when learning this piecewise constant function.

\begin{figure}[t!]
\centering
\begin{subfigure}{\textwidth}
\centering
    \includegraphics[width=\linewidth]{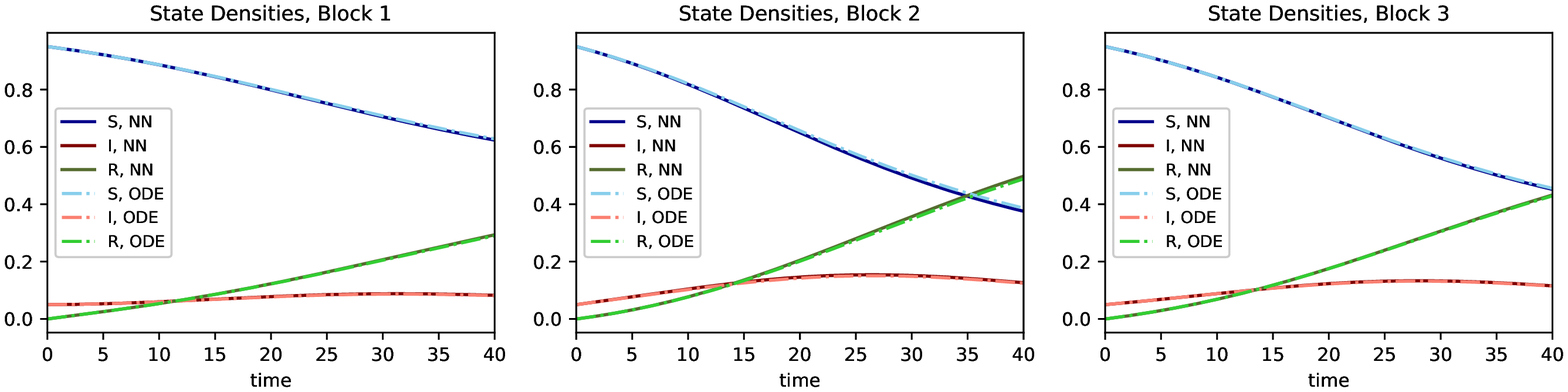}
\end{subfigure}

\begin{subfigure}{\textwidth}
\centering
    \includegraphics[width=\linewidth]{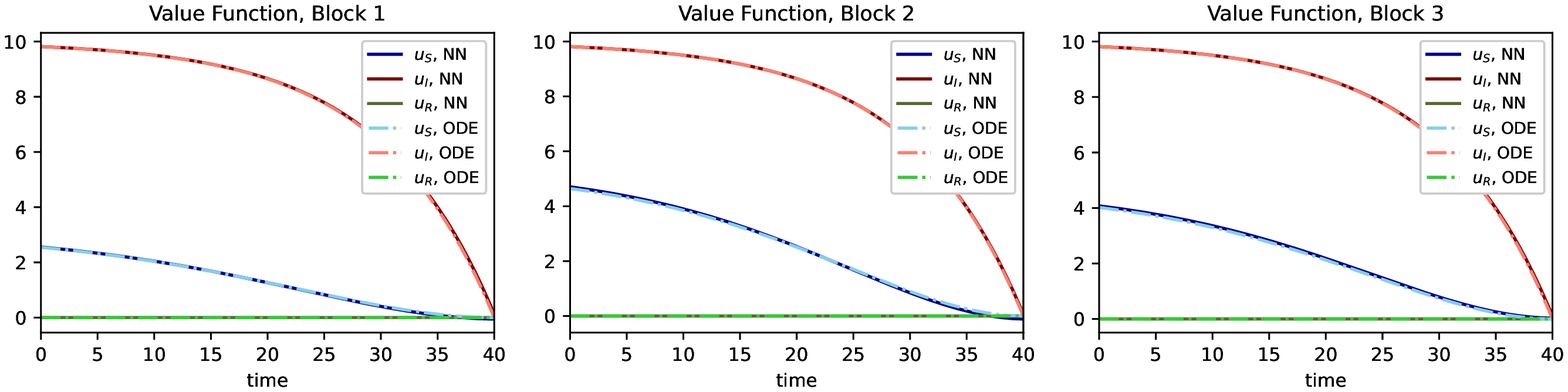}
\end{subfigure}

\begin{subfigure}{\textwidth}
\centering
    \includegraphics[width=\linewidth]{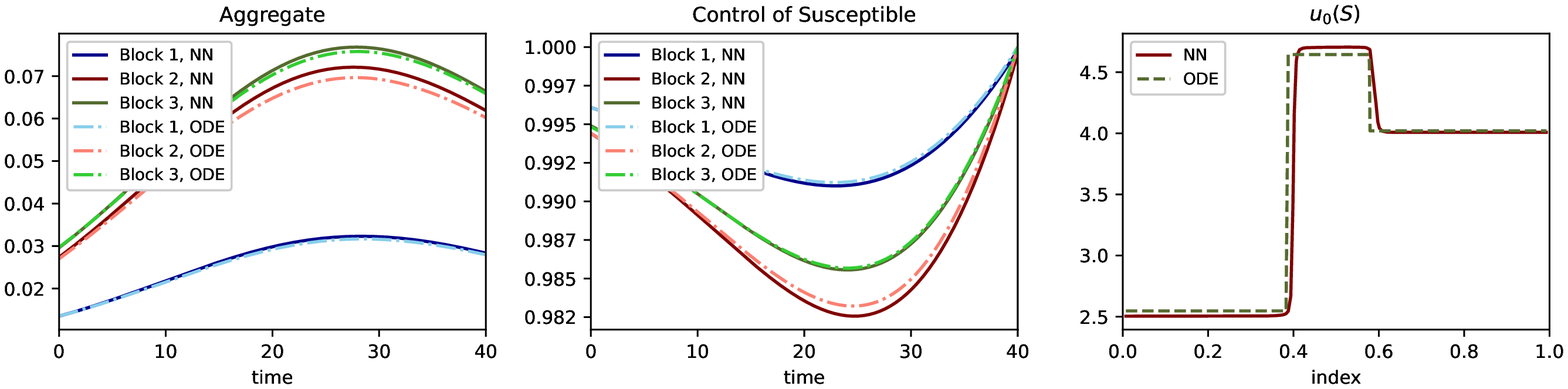}
\end{subfigure}
\caption{\small Comparison of the ODE and NN results when there is a lockdown for City 1: \textbf{Top:} State densities in Block 1 (left), Block 2 (middle) and Block 3 (right). \textbf{Middle:} Value functions given each state in Block 1 (left), Block 2 (middle) and Block 3 (right). \textbf{Bottom:} Aggregate $\boldsymbol Z$ (left), Control of susceptible people (middle) and value function at time 0 given state is susceptible as a function of index, $u^x(0, \mathsf S)$ (right). In the plots, Block $i$ refers to the City $i$.}
\label{fig:nnVSode}
\end{figure} 

\subsection{General Graphon}

To show scalability of the proposed numerical approach now we focus on the second example in \cite{aurell2020optimal} with the SEIRD model where the state $(\mathsf{E})$xposed is added. An individual is in state $\mathsf E$ when infected but is not yet infectious. Hence, the agents evolve from $\mathsf S$ to $\mathsf E$ and then $\mathsf I$, and the infection rate from $\mathsf S$ to $\mathsf E$ depends on the proportion of the infected agents. The diagram of the dynamics can be seen in Figure \ref{fig:SEIRD-diagram}. The cost structure is similar to the one used in Section~\ref{sec:example1}. After introducing the state $\mathsf{E}$, we set
\begin{equation*}
\begin{aligned}
\label{eq:SEIRD_cost}
    &f^x(t,e,z,\alpha) 
    =
    \frac{c_\lambda}{2}\left(\lambda^{(\mathsf S)}(t) - \alpha\right)^2 \mathbf{1}_{(e = \mathsf S)} + \frac{c_\lambda}{2}\left(\lambda^{(\mathsf E)}(t) - \alpha\right)^2 \mathbf{1}_{(e = \mathsf E)}\, + 
    \\
    &
     \left(\frac{1}{2}\left(\lambda^{(\mathsf I)}(t) - \alpha\right)^2 + c_I(x)\right)\mathbf 1_{(e = \mathsf I)} 
    + \frac{1}{2}\left(\lambda^{(\mathsf R)}(t)-\alpha\right)^2 \mathbf 1_{(e = \mathsf R)}
    + c_D(x) \mathbbm 1_{(e = \mathsf D)}.
\end{aligned}
\end{equation*}

\begin{figure}[t!]
\begin{center}

\tikzset{every picture/.style={line width=0.75pt}}

\begin{tikzpicture}[x=0.65pt,y=0.65pt,yscale=-1,xscale=1]
\draw  [fill={rgb, 255:red, 255; green, 165; blue, 0 }  ,fill opacity=1 ] (155,126.8) .. controls (155,123.6) and (157.6,121) .. (160.8,121) -- (179.2,121) .. controls (182.4,121) and (185,123.6) .. (185,126.8) -- (185,144.2) .. controls (185,147.4) and (182.4,150) .. (179.2,150) -- (160.8,150) .. controls (157.6,150) and (155,147.4) .. (155,144.2) -- cycle ;
\draw  [fill={rgb, 255:red, 255; green, 0; blue, 0 }  ,fill opacity=1 ] (240,126) .. controls (240,122.69) and (242.69,120) .. (246,120) -- (264,120) .. controls (267.31,120) and (270,122.69) .. (270,126) -- (270,144) .. controls (270,147.31) and (267.31,150) .. (264,150) -- (246,150) .. controls (242.69,150) and (240,147.31) .. (240,144) -- cycle ;
\draw  [fill={rgb, 255:red, 154; green, 205; blue, 50 }  ,fill opacity=1 ] (360,155.8) .. controls (360,152.6) and (362.6,150) .. (365.8,150) -- (384.2,150) .. controls (387.4,150) and (390,152.6) .. (390,155.8) -- (390,173.2) .. controls (390,176.4) and (387.4,179) .. (384.2,179) -- (365.8,179) .. controls (362.6,179) and (360,176.4) .. (360,173.2) -- cycle ;
\draw    (185,135) -- (238,135) ;
\draw [shift={(240,135)}, rotate = 180] [color={rgb, 255:red, 0; green, 0; blue, 0 }  ][line width=0.75]    (10.93,-3.29) .. controls (6.95,-1.4) and (3.31,-0.3) .. (0,0) .. controls (3.31,0.3) and (6.95,1.4) .. (10.93,3.29)   ;
\draw    (270,135) -- (358.25,85.97) ;
\draw [shift={(360,85)}, rotate = 510.95] [color={rgb, 255:red, 0; green, 0; blue, 0 }  ][line width=0.75]    (10.93,-3.29) .. controls (6.95,-1.4) and (3.31,-0.3) .. (0,0) .. controls (3.31,0.3) and (6.95,1.4) .. (10.93,3.29)   ;
\draw  [fill={rgb, 255:red, 230; green, 230; blue, 250 }  ,fill opacity=1 ] (360,75.8) .. controls (360,72.6) and (362.6,70) .. (365.8,70) -- (384.2,70) .. controls (387.4,70) and (390,72.6) .. (390,75.8) -- (390,93.2) .. controls (390,96.4) and (387.4,99) .. (384.2,99) -- (365.8,99) .. controls (362.6,99) and (360,96.4) .. (360,93.2) -- cycle ;
\draw    (270,135) -- (358.1,164.37) ;
\draw [shift={(360,165)}, rotate = 198.43] [color={rgb, 255:red, 0; green, 0; blue, 0 }  ][line width=0.75]    (10.93,-3.29) .. controls (6.95,-1.4) and (3.31,-0.3) .. (0,0) .. controls (3.31,0.3) and (6.95,1.4) .. (10.93,3.29)   ;
\draw  [fill={rgb, 255:red, 135; green, 206; blue, 250 }  ,fill opacity=1 ] (0,126.8) .. controls (0,123.6) and (2.6,121) .. (5.8,121) -- (24.2,121) .. controls (27.4,121) and (30,123.6) .. (30,126.8) -- (30,144.2) .. controls (30,147.4) and (27.4,150) .. (24.2,150) -- (5.8,150) .. controls (2.6,150) and (0,147.4) .. (0,144.2) -- cycle ;
\draw    (30,135) -- (153,135) ;
\draw [shift={(155,135)}, rotate = 180] [color={rgb, 255:red, 0; green, 0; blue, 0 }  ][line width=0.75]    (10.93,-3.29) .. controls (6.95,-1.4) and (3.31,-0.3) .. (0,0) .. controls (3.31,0.3) and (6.95,1.4) .. (10.93,3.29)   ;

\draw (163,129) node [anchor=north west][inner sep=0.75pt]   [align=left] {$\displaystyle E$};
\draw (250,129) node [anchor=north west][inner sep=0.75pt]   [align=left] {$\displaystyle I$};
\draw (368,158) node [anchor=north west][inner sep=0.75pt]   [align=left] {$\displaystyle R$};
\draw (75,115) node [anchor=north west][inner sep=0.75pt]  [font=\small] [align=left] {$\displaystyle \beta\alpha Z_t^x$};
\draw (303,157) node [anchor=north west][inner sep=0.75pt]  [font=\small] [align=left] {$\displaystyle \rho\gamma $};
\draw (368,78) node [anchor=north west][inner sep=0.75pt]   [align=left] {$\displaystyle D$};
\draw (8,129) node [anchor=north west][inner sep=0.75pt]   [align=left] {$\displaystyle S$};
\draw (280,80) node [anchor=north west][inner sep=0.75pt]  [font=\small] [align=left] {$\displaystyle (1-\rho)\gamma $};
\draw (210,120) node [anchor=north west][inner sep=0.75pt]  [font=\small]  [align=left] {$\displaystyle \epsilon $};

\end{tikzpicture}

\end{center}
\caption{Diagram of SEIRD model for individual $x$}
\label{fig:SEIRD-diagram}
\end{figure}
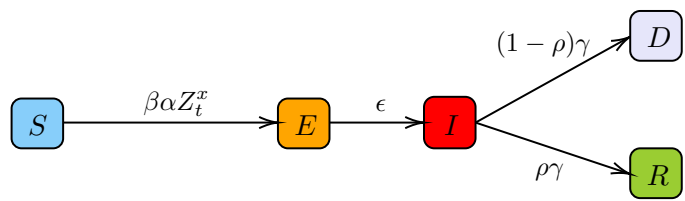

In this example, we focus on an application where the agents are not homogeneous over blocks. The interaction strength between individual $x$ and $y$ is now given by the power law graphon: $w(x,y) = (xy)^{-g}$ where $-\infty<g\leq0$ is a constant\footnote{We can realize that if $g=0$, the setting is equivalent to a mean field game.}. Intuitively, the power law graphon models interactions in a population where a small number of individuals are responsible for a large number of the interactions. For example, a population with superspreaders\footnote{A superspreader is an infected person who is able to transmit the disease to a disproportionately high number of people.} can be modeled with this graphon. The model with an underlying power law graphon interaction requires us to solve a continuum of coupled ODEs which is not computationally feasible. However, by using the \texttt{DeepGraphonGame} algorithm, the solution can be learned by using simulated particles, \textit{i.e.} agents. 

According to CDC, COVID-19 reinfection is very rare\footnote{\url{https://www.cdc.gov/coronavirus/2019-ncov/your-health/reinfection.html}}; therefore, we assume that there is no reinfection (\textit{i.e.} $\kappa =0$). Furthermore, the recovery duration is around 10 days since the symptom onset\footnote{\url{https://www.cdc.gov/coronavirus/2019-ncov/hcp/duration-isolation.html}}. For this reason, we assume that $\gamma=0.1\; \text{days}^{-1}$. According to the recent study conducted by Lauer \textit{et al.} \cite{incubation}, an exposed person begins to show symptoms after around 5 days. Based on this observation, we choose $\epsilon= 0.2\; \text{days}^{-1}$. Finally, the Basic Reproduction Number estimate $R_0=2$ used by CDC\footnote{\url{https://www.cdc.gov/coronavirus/2019-ncov/hcp/duration-isolation.html}} leads us to set $\beta = R_0 \times \gamma = 0.2$ in our simulations.

The experiment results for a sampled finite subset of the agent population are presented in Figure~\ref{fig:powerlaw}. In the figure, each line corresponds to one agent and the color of the plot gets darker as the index of the agent (\textit{i.e.} $x$) increases. Our first observation is that as the index of the agent increases, the aggregate also increases. In response to this high aggregate, the agent lower its contact rate, in order to protect itself. However, this protection is not enough to neutralize the effects of the high levels of the aggregate and the probability of the agent to be infected is still elevated.

\begin{figure}[t!]
\centering
\begin{subfigure}{\textwidth}
\centering
    \includegraphics[width=\linewidth]{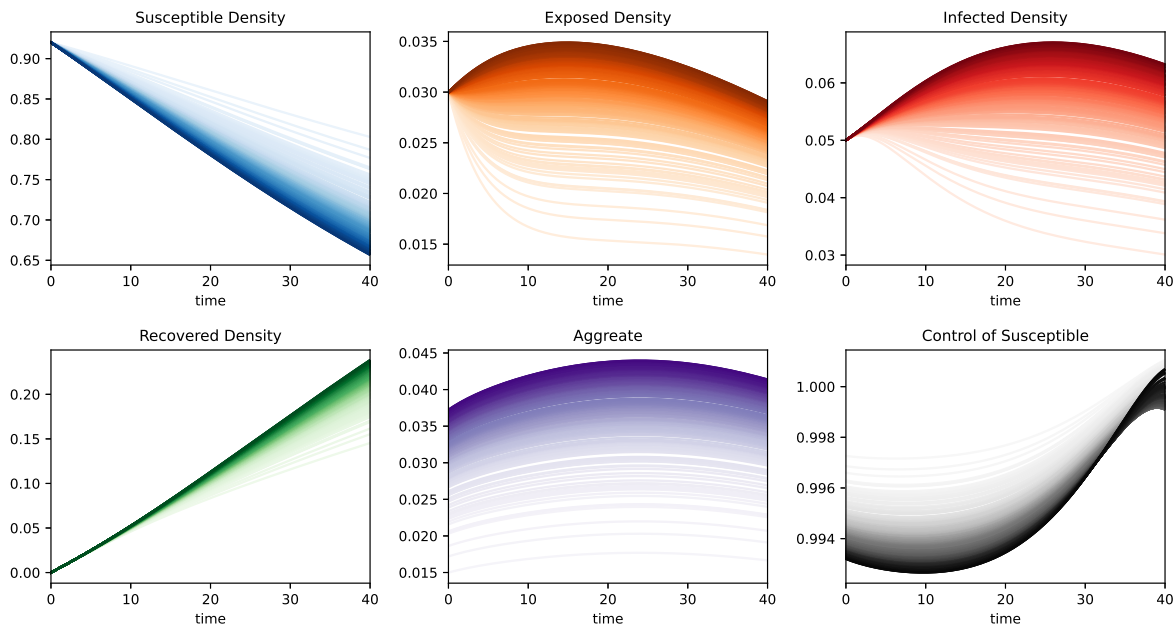}
\end{subfigure}
\caption{\small Results of agents from a sampled population. In each plot, colors are chosen from a continuous colormap to represent the index of the agents with the following convention: If the index $x$ of a player is higher, the color of the line is darker.\textbf{Top:} Probability of being susceptible (left), exposed (middle) and infected (right). \textbf{Bottom:} Probability of being recovered (left), Aggregate $\boldsymbol Z$ (middle), Control at the susceptible state (right).}
\label{fig:powerlaw}
\end{figure}

\section{The Probabilistic Approach to Epidemological Graphon Games}
\label{sec:fubini}

This section contains a closer study of the continuum of interacting jump processes that is the graphon game dynamics.
Going back to the informal discussion in the introductory Section~\ref{sec:intro-example}, it is not clear that there would be an adequate law of large numbers so that \eqref{eq:intro-example-1} converges to \eqref{eq:intro-example-2} since the random variables involved are dependent. A common approach used in the economic theory for this situation is to consider the continuum limit and average the continuum of random variables with respect to a non-atomic probability measure over $I$. Using the example from Section~\ref{sec:intro-example} again, a continuum limit of \eqref{eq:intro-example-1} is
\begin{equation*}
    \beta \alpha^{x_j}_t \int_I w(x_j, y)\alpha^y_t\mathbf{1}_{\mathsf{I}}(X^y_{t-})dy.
\end{equation*}
However, the integral in the expression above is ill-defined. There is an issue of constructing a continuum of independent random variables (here, that would be the driving Poisson noise) that are joint measurable in the sample and the index. If the construction is done in the usual way via the Kolmogorov construction then almost all random variables are essentially equal to an arbitrarily given function on the index space (\textit{i.e.}, as random variables they are constants). Hence, in any interesting case the function $y \mapsto X^y_{t-}$ will not be measurable with respect to the Lebesgue measure. One solution proposed by economists is to extend the usual probability space to a so-called a Fubini extension \cite{sun1998theory}, a probability space over $\Omega\times I$ where Fubini's theorem holds. On the Fubini extension a continuum of random variables can be constructed that are essentially pairwise independent (e.p.i.; see Theorem~\ref{thm:richfubini} for the definition) and jointly measurable in sample and index. Moreover, there is hope for an exact law of large numbers \cite{sun2006exact}, justifying our assumption about the determinism of the aggregate variable in the previous sections. We will construct a Fubini extension that carries a continuum of e.p.i. Poisson random measures. Then, each player path will be defined in the representation of the counting processes associated to the pure jump process as a stochastic integral with respect to a family of independent Poisson random measures with Lebesgue mean measure on $\mathbb{R} \times [0, T]$ as suggested in Skorokhod \cite{skorokhod1982studies} and Grigelionis \cite{grigelionis1971representation}. 

\subsection{Theoretical Background and Definitions}
\subsubsection{Poisson Random Measures}
\label{sec:Poisson}
Let us denote by $(\Gamma, \mathcal{G}, \mathbb{G})$ the measure space
$(\mathbb{R}\times[0,T], \mathcal{B}(\mathbb{R}\times[0,T]), \text{Leb}_{\mathbb{R}\times[0,T]})$. 
We first recall the definition of a Poisson random measure. A family $\bigl(N_\cdot(G)\bigr)_{G\in \mathcal{G}}$  of random variables defined on some probability space $(H, \mathcal{H}, \mathbb{H})$ 
is said to be a Poisson random measure with measure $\mathbb{G}$ if 
\begin{itemize}
    \item for all $G\in\mathcal{G}$ such that $\mathbb{G}(G)<\infty$, $N_\cdot(G)$ is a Poisson random variable with rate $\mathbb{G}(G)$;
    \item the random variables $N_\cdot(G_1),\dots, N_\cdot(G_n)$ are mutually independent whenever the sets $G_1,\dots, G_n \in \mathcal{G}$ have finite $\mathbb{G}$-measures and are disjoint;
    \item for all $\omega\in H$, $N_\omega(\cdot)$ is a measure on $(\Gamma,\mathcal{G})$.
\end{itemize}
We define 
$
\tilde{M} := \{\mu \ |\ \mu \text{ is a $\sigma$-finite non-negative measure on }(\Gamma,\mathcal{G})\},
$ 
and the subset of locally finite measures by 
$$
M := \{\mu\in\tilde{M}\ |\ \mu(\tilde G) < \infty \text{ for all bounded } \tilde G \in \mathcal{G}\}.
$$
For all bounded measurable $\tilde G$, define the mappings
$
I_{\tilde G} : \tilde{M} \ni \mu \mapsto \mu(\tilde G) \in \mathbb{R}.
$
Let $\tilde{\mathcal{M}}$ and $\mathcal{M}$ be the $\sigma$-algebra induced by the mappings $I_{\tilde G}$ on $\tilde{M}$ and $M$ respectively. For us, a random measure will be a measurable function from $(H, \mathcal{H}, \mathbb{H})$ into $(\tilde{M}, \tilde{\mathcal{M}})$ that almost surely takes values in $(M, \mathcal{M})$. We shall also use the fact that $M$ (equipped with the vague topology) is a Polish space \cite{dawson2012superprocesses}. We denote the law of $N$ on $(M, \mathcal{M})$ by $\mathcal{N}$. 

The Poisson random measure $N$ has an accompanying martingale. For all bounded $A\in \mathcal{B}(\mathbb{R})$, 
$\widehat{N}(A,t) := N(A\times [0,t])-  \text{Leb}(A)t$ is a square integrable zero-mean martingale.

\subsubsection{The Fubini Extension}
\label{sec:fub-ext}

In the model, state dynamics are given as $E$-valued jump processes. We will construct such a process from $2|E|-1=2n-1$ independent Poisson random measures. The possible jumps will be those so that the state process jumps between two integers in $E$, at most $n-1$ steps up or down. The initial state of player $x\in I$ is randomly selected from a pre-selected distribution $p^x_0 \in \mathcal{P}(E)$. 

In order to model idiosyncratic random shocks affecting the dynamics of the individual states, we use the framework of Fubini extensions. It allows us to capture a form of independence for a continuum of random variables, while preserving joint measurability.
\begin{definition}
\label{def:fub}
If $(\Omega, \mathcal{F}, \mathbb{P})$ and $(I, \mathcal{I}, \lambda)$ are probability spaces, a probability space $(\Omega\times I, \mathcal{W}, \mathbb{Q})$ extending the usual product space $(\Omega\times I, \mathcal{F}\otimes \mathcal{I}, \mathbb{P}\otimes \lambda)$ is said to be a Fubini extension if for any real-valued $\mathbb{Q}$-integrable function $f$ on $(\Omega\times I, \mathcal{W})$
\begin{enumerate}[label={(\roman*)}]
    \item\label{def:Fubini-i} the two functions $f_x : \omega \mapsto f(\omega,x)$ and $f_\omega : x \mapsto f(\omega,x)$ are integrable, respectively, on $(\Omega, \mathcal{F},\mathbb{P})$ for $\lambda$-a.e. $x\in I$, and on $(I,\mathcal{I},\lambda)$ for $\mathbb{P}$-a.e. $\omega\in \Omega$;
    \item\label{def:Fubini-ii} $\int_\Omega f_x(\omega) d\mathbb{P}$ and $\int_If_\omega(x)d\lambda(x)$ are integrable, respectively, on $(I,\mathcal{I},\lambda)$ and $(\Omega, \mathcal{F},\mathbb{P})$, with the Fubini property
    \begin{align*}
    \int_{\Omega\times I}f(\omega,x)d\mathbb{Q}(\omega,x) &= \int_I\left(\int_\Omega f_x(\omega)d\mathbb{P}(\omega)\right)d\lambda(x) 
    \\
    &= \int_\Omega\left(\int_If_\omega(x)d\lambda(x)\right)d\mathbb{P}(\omega).
    \end{align*}
\end{enumerate}
\end{definition}

The following theorem summarizes the results by Sun and collaborators, see for example \cite{sun2006exact} and \cite{sun2009individual}, which we use as a foundation for our model.

\begin{theorem}
\label{thm:richfubini}
There exists a probability space $(I,\mathcal{I},\lambda)$ extending $(I,\mathcal{B}_I,\lambda_I)$, a probability space $(\Omega, \mathcal{F},\mathbb{P})$, and a Fubini extension $(\Omega\times I, \mathcal{F}\boxtimes\mathcal{I}, \mathbb{P}\boxtimes\lambda)$ such that for any measurable mapping $\underline\phi$ from $(I,\mathcal{I}, \lambda)$ to $\mathcal{P}(E \times M^{2n-1})$ there is an $\mathcal{F}\boxtimes\mathcal{I}$-measurable process $\underline{f} : \Omega\times I \rightarrow E \times M^{2n-1}$ such that the random variables $f^x = \underline{f}(\cdot, x)$ are essentially pairwise independent (e.p.i.), \textit{i.e.}, for $\lambda$-a.e. $x\in I$, $f^x$ is independent of $f^y$ for $\lambda$-a.e. $y\in I$, and $\mathbb{P}\circ (f^x)^{-1} = \phi^x$ for all $x\in I$.
\end{theorem}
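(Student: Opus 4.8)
The plan is to regard this statement as a concrete instance of the abstract existence theory for \emph{rich} Fubini extensions due to Sun and coauthors \cite{sun2006exact,sun2009individual}, and to reduce it to that theory in three moves: (i) check that the target space is Polish; (ii) import the existence of a rich Fubini extension carrying a universal essentially pairwise independent (e.p.i.) process; and (iii) transport the prescribed marginals $\underline\phi$ onto that universal process by a jointly measurable transform. The genuinely hard analytic content lives in step (ii) and is not something I would reprove from scratch.

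First I would verify that $E\times M^{2n-1}$ is a Polish space, so that $\mathcal{P}(E\times M^{2n-1})$ and the measurability of $\underline\phi$ are meaningful. The set $E=\{1,\dots,n\}$ is finite, hence Polish, and $M$ endowed with the vague topology is Polish (as already noted above, see \cite{dawson2012superprocesses}); a finite product of Polish spaces is Polish, so $E\times M^{2n-1}$ is Polish and $\mathcal{P}(E\times M^{2n-1})$ is itself Polish under the topology of weak convergence. This is the standing hypothesis needed to apply the construction to a measure-valued target.

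Second, I would invoke the existence of a rich Fubini extension, and this is where I expect the main obstacle to sit. One needs a probability space $(\Omega\times I,\mathcal{F}\boxtimes\mathcal{I},\mathbb{P}\boxtimes\lambda)$ extending the usual product space, on which the Fubini identities of Definition~\ref{def:fub} hold, \emph{and} which is rich enough to support a jointly measurable process whose sections are e.p.i. with arbitrarily prescribed laws. The known route is a hyperfinite Loeb-space construction: take a hyperfinite internal index set, form the internal product with a hyperfinite sample space, and pass to the associated Loeb measures; the resulting Loeb product simultaneously enjoys the Fubini property and carries an e.p.i. sampling process. I would cite \cite{sun2006exact,sun2009individual} for this and treat it as a black box, since reproducing the nonstandard-analysis machinery lies outside the scope of the present argument.

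Third, I would transport the marginals. Fix a universal e.p.i., jointly measurable process $g:\Omega\times I\to[0,1]$ on the rich extension whose sections $g^x$ are uniform on $[0,1]$. Because $E\times M^{2n-1}$ is Polish and $x\mapsto\phi^x$ is measurable, a parametric inverse-transform (Skorokhod) argument produces a jointly measurable map $T:I\times[0,1]\to E\times M^{2n-1}$, $(x,u)\mapsto T^x(u)$, with $T^x$ pushing the Lebesgue measure forward to $\phi^x$ for every $x$; the joint measurability in $(x,u)$ is supplied by the Kuratowski--Ryll-Nardzewski measurable selection theorem applied through a Borel isomorphism of $E\times M^{2n-1}$ onto a Borel subset of $[0,1]$. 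Setting $f^x:=T^x\circ g^x$, equivalently $\underline f(\omega,x):=T(x,g(\omega,x))$, gives an $\mathcal{F}\boxtimes\mathcal{I}$-measurable process as a composition of jointly measurable maps; its sections satisfy $\mathbb{P}\circ(f^x)^{-1}=\phi^x$ by construction, and they remain e.p.i. because measurable functions of e.p.i. random variables are again e.p.i. This completes the reduction, the only nonelementary inputs being the imported rich-extension existence and the measurable-selection step needed to accommodate the infinite-dimensional factor $M^{2n-1}$.
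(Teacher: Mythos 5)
Your proposal is correct in substance, but it is worth knowing that the paper itself offers no proof of this statement at all: Theorem~\ref{thm:richfubini} is explicitly presented as a ``summary of the results by Sun and collaborators,'' i.e., a verbatim import of the rich Fubini extension theorem of \cite{sun2006exact} (richness and the exact law of large numbers) combined with \cite{sun2009individual} (the index space can be taken to be an extension of the Lebesgue unit interval), specialized to the Polish space $E\times M^{2n-1}$. In the strongest cited form, that theorem already delivers an e.p.i., jointly measurable process with \emph{arbitrary} measurably-assigned marginals $\underline\phi$ on any Polish target, so your step (iii) is not needed if one imports that version --- your Polishness check in step (i) is then the entire content of the reduction. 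What your decomposition buys is genuine, though: by transporting marginals through a quantile transform and a measurable-selection argument, you derive the full statement from the strictly weaker black box of a single e.p.i. process with uniform $[0,1]$ sections, which is closer to what \cite{sun2009individual} constructs directly. Two small points to tighten: first, the theorem requires $\mathbb{P}\circ(f^x)^{-1}=\phi^x$ for \emph{all} $x\in I$, not just $\lambda$-a.e.\ $x$, so if your universal uniform process has uniform sections only for a.e.\ $x$ you must patch the exceptional null set (e.g., by setting $f^x=T^x(U)$ there for one fixed uniform random variable $U$ on $\Omega$, which preserves joint measurability and e.p.i.\ since only a null set of indices is altered); second, the e.p.i.\ preservation claim should be stated for the pair structure --- for a.e.\ $x$, $g^x\perp g^y$ for a.e.\ $y$ implies $T^x(g^x)\perp T^y(g^y)$ for the same pairs --- which is what your argument uses and is indeed correct.
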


We set $\phi^x = p^x_0 \otimes (\otimes_{k=-n+1}^{n-1} \mathcal{N})$, where $\mathcal{N}$ is the probability law of the Poisson random measure introduced above, and $p^x_0$ is the initial distribution of player $x$. By Theorem~\ref{thm:richfubini} (which holds true since $E$ and $M$ are Polish spaces) there exists a collection of random variables $(\underline{\xi}, \underline{N}_{k}; k = -n+1 ,\dots, n-1)$ on a Fubini extension $(\Omega\times I, \mathcal{F}\boxtimes\mathcal{I}, \mathbb{P}\boxtimes \lambda)$, that are e.p.i. and $\phi^x$-distributed for all $x\in I$. With the model in Section~\ref{sec:model} in mind, we assume that the mapping $x \mapsto p^x_0$ is Lebesgue-measurable (this assumption is however not necessary for the analysis that follows).

We denote by $L^2_\boxtimes(\Omega\times I; \mathcal{D})$ the Bochner space of all (equivalence classes of) strongly $(\mathbb{P}\boxtimes\lambda,\mathcal{B}(\mathcal{D}))$-measurable functions $\underline{f} : \Omega\times I \rightarrow \mathcal{D}$ for which
\begin{equation*}
    \mathbb{E}^\boxtimes\left[\|\underline f\|^2_T\right] = \int_{\Omega\times I} \| \boldsymbol f^x(\omega)\|^2_T\mathbb{P}\boxtimes\lambda(d\omega, dx) < + \infty.
\end{equation*}
We define $L^2_\boxtimes(\Omega\times I; \mathcal{C})$ in the same way, with $\mathcal{C}$ replacing $\mathcal{D}$ above. By \textit{e.g.} \cite[Ch. 1.2.b]{hytonen2016analysis},  $L^2_\boxtimes(\Omega\times I; \mathcal{C})$ and $L^2_\boxtimes(\Omega\times I; \mathcal{D})$ are Banach spaces. 

For later reference, we define also the set $\mathcal{L}_E$ as the subset of $L^2_\boxtimes(\Omega\times I; \mathcal{D})$ of $\mathbb{P}\boxtimes\lambda$-a.e. $\mathcal{D}_E$-valued functions. One can show that $\mathcal{L}_E$ is a closed subset of $L^2_\boxtimes(\Omega\times I; \mathcal{D})$, hence $(\mathcal{L}_E, \|\cdot\|_{L^2_\boxtimes(\Omega\times I; \mathcal{D})})$ is a complete metric space.

\subsubsection{The Set of Admissible Strategies}
We can now give a rigorous definition of the set of admissible strategy profiles. Recall that $A$ is a compact subset of $\mathbb{R}$.
\begin{definition}
We define the set $\underline{\mathbb{A}}$ of admissible strategy profiles in feedback form as the set of $A$-valued, $\mathcal{I}\otimes \mathcal{B}([0,T]) \otimes \mathcal{B}(E)$-measurable functions $\underline{\alpha}$ on $I\times [0,T] \times E$ satisfying for every $(x
,e) \in I\times E$: $\underline{\alpha}(x,\cdot, e)$ is a continuous function on $[0,T]$.
\end{definition}
We will sometimes use the same notation $\underline{\mathbb{A}}$ for the set of admissible control processes associated to an admissible strategy profile in feedback form $\underline{\boldsymbol\alpha}$. At time $t$, for player $x\in I$, the value of such a control process is the action $\underline{\alpha}(x,t,X^{x,\boldsymbol{\alpha}}_{t-})$ where $X^{x,\boldsymbol{\alpha}}_{t-}$ is the state of player $x$ just before time $t$. These control processes are predictable with respect to the filtration generated by the player's own state, and decentralized  since they do not depend directly on the states of the other players. 

The continuity in time is a strong assumption and disallows the player to immediately react to abrupt changes in their environment without jumping to a new state. However, if a player transitions between two states at time $t$ their control can be discontinuous (as a multivariate function of time and state). In Section~\ref{sec:model}, a rationale was given for restriction our attention to such controls.

\subsection{The Finite State Graphon Game in the Fubini Extension}

We begin by describing an interacting system of a continuum of particles. First, we define the decoupled system where the aggregate variable vector has been ``frozen". Then, we define the aggregate with a fixed point argument. Finally, we prove that the aggregate is in fact deterministic. 

Consider the pure jump stochastic integral equation (here written formally)
\begin{equation}
\begin{aligned}
\label{eq:linear_dynamics_intro}
    &X_t^{\underline{\boldsymbol\alpha},\underline{\boldsymbol z},x} = \xi^x + \sum_{k=-n+1}^{n-1} k \int_{\mathbb{R}\times(0,t]} \mathbf{1}_{[0, 
    \kappa^x(X_{s-}^{\underline{\boldsymbol \alpha}, \underline{\boldsymbol z}, x},k, \alpha^x_s, z^x_{s-})]}(y)N_k^x(dy, ds),
\end{aligned}
\end{equation}
where $x\in I$, $t\in[0,T]$, $\underline{\boldsymbol\alpha}(\omega) = (\boldsymbol\alpha^x(\omega))_{x} := (\underline{\alpha}(x,t,X^{\underline{\boldsymbol\alpha},\underline{\boldsymbol z}, x}_{t-}(\omega))_{t,x}$ for some admissible strategy profile $\underline{\boldsymbol\alpha}$, $\underline{\boldsymbol z}\in L^2_\boxtimes(\Omega\times I; \mathcal{D})$, 
for any $k\in \mathbb{Z}$, $x\in I$, $s\in [0,T]$, $a\in A$, $z\in \mathbb{R}$, $i\in E$
\begin{equation}
    \kappa^x(i,k,a,z)
    :=
    \begin{cases}
    q^x_{i, i+k}(a,z), & i+k\in E,
    \\
    0, & i + k \not \in E,
    \end{cases}
\end{equation}
is the rate of jumps from state $i$ to state $i+k$ given the action $a\in A$ and the aggregate value $z$. The proposition below asserts that \eqref{eq:linear_dynamics_intro} has a unique solution in $\mathcal{L}_E$, the subset of $L^2_\boxtimes(\Omega\times I; \mathcal{D})$ defined in the end of section~\ref{sec:fub-ext}.

\begin{proposition}
\label{prop:X}
Assume that Condition~\ref{eq:cond_on_q} holds. Let $\underline{\boldsymbol z}\in L^2_\boxtimes(\Omega\times I; \mathcal{D})$ and $\underline{\alpha} \in \underline{\mathbb{A}}$ be fixed. Then there is a unique strong solution 
$\underline{\boldsymbol X}^{\underline{\boldsymbol\alpha},\underline{\boldsymbol z}} \in \mathcal{L}_E$ to \eqref{eq:linear_dynamics_intro}, \textit{i.e.}, a $\mathbb{P}\boxtimes\lambda$-a.e. $\mathcal{D}_E$-valued process satisfying \eqref{eq:linear_dynamics_intro} $\mathbb{P}\boxtimes\lambda$-a.s.
\end{proposition}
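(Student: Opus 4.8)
The plan is to solve \eqref{eq:linear_dynamics_intro} by a Banach fixed point argument carried out directly in the Bochner space $L^2_\boxtimes(\Omega\times I;\mathcal D)$, which has the advantage of producing joint measurability in $(\omega,x)$ ``for free'' as a property of the limit, rather than forcing a separate measurable-selection argument. First I would absorb the feedback dependence into the rates by writing $\tilde\kappa^x(i,k,s,z) := \kappa^x(i,k,\underline\alpha(x,s,i),z)$; since $\underline\alpha$ is admissible this is measurable and, by Condition~\ref{eq:cond_on_q}.\ref{eq:cond_on_q-bdd}, still bounded by $q_{\max}$, and I would extend it by zero to all integer states off $E$, consistently with the convention already present in $\kappa^x$. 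With $\underline{\boldsymbol z}$ and $\underline{\boldsymbol\alpha}$ frozen the equation decouples across $x$: the integrand for index $x$ involves only $\xi^x$, $z^x$ and the measures $N^x_k$, so no cross-index coupling appears and the candidate map acts componentwise, the remaining task being to assemble the per-index solutions into a genuine element of $L^2_\boxtimes(\Omega\times I;\mathcal D)$.

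Second, I would define $\Phi$ on $L^2_\boxtimes(\Omega\times I;\mathcal D)$ by letting $\Phi(\underline{\boldsymbol X})^x_t$ be the right-hand side of \eqref{eq:linear_dynamics_intro} with $\underline{\boldsymbol X}$ inserted into the rates, and check that it maps into $L^2_\boxtimes(\Omega\times I;\mathcal D)$: because every jump has integer size $k$ and the total intensity is bounded by $(2n-1)q_{\max}$, the number of jumps on $[0,T]$ is stochastically dominated by a Poisson variable, giving a uniform-in-$x$ second-moment bound on $\|\Phi(\underline{\boldsymbol X})^x\|_T$, while joint measurability is inherited from that of the driving data supplied by Theorem~\ref{thm:richfubini}. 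The crux is the contraction bound. Decomposing the Poisson integral into its compensated martingale plus compensator and applying Doob's inequality to the former, everything reduces to controlling, for two inputs $\underline{\boldsymbol X},\underline{\boldsymbol X}'$ carrying the \emph{same} frozen $z^x_{s-}$,
\begin{equation*}
    \int_{\mathbb{R}} \left|\mathbf{1}_{[0,\tilde\kappa^x(X^x_{s-},k,s,z^x_{s-})]}(y) - \mathbf{1}_{[0,\tilde\kappa^x(X'^x_{s-},k,s,z^x_{s-})]}(y)\right| dy = \left|\tilde\kappa^x(X^x_{s-},k,s,z^x_{s-}) - \tilde\kappa^x(X'^x_{s-},k,s,z^x_{s-})\right|.
\end{equation*}
Here the discreteness of $E$ is essential: the aggregate term drops because $z$ is common to both inputs, and since two distinct states differ by at least one, Condition~\ref{eq:cond_on_q}.\ref{eq:cond_on_q-lip} upgrades mere boundedness into the Lipschitz-type bound $|\tilde\kappa^x(i,\dots)-\tilde\kappa^x(j,\dots)| \le C\,\mathbf{1}_{\{i\neq j\}} \le C|i-j|^2$, so the integrated indicator difference is controlled by $|X^x_{s-}-X'^x_{s-}|^2$.

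Third, inserting this into the $L^2$ isometry for the martingale part and a Cauchy--Schwarz estimate for the compensator yields
\begin{equation*}
    \mathbb{E}^\boxtimes\big[\|\Phi(\underline{\boldsymbol X}) - \Phi(\underline{\boldsymbol X}')\|^2_{t}\big]
    \le
    C\int_0^t \mathbb{E}^\boxtimes\big[\|\underline{\boldsymbol X}-\underline{\boldsymbol X}'\|^2_{s}\big]\,ds ,
\end{equation*}
so that $\Phi$ is a contraction on a short interval $[0,T_0]$ and, iterating over successive subintervals (equivalently, by a Gronwall or weighted-norm argument), admits a unique fixed point $\underline{\boldsymbol X}^{\underline{\boldsymbol\alpha},\underline{\boldsymbol z}}$ in $L^2_\boxtimes(\Omega\times I;\mathcal D)$. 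Finally I would confirm membership in $\mathcal L_E$: starting from $\xi^x\in E$ and moving only by integer increments, the fixed point is integer-valued, and since the extended rates vanish at the boundary of $E$ it can never exit $E$, hence is $\mathcal D_E$-valued $\mathbb{P}\boxtimes\lambda$-a.e.; the $L^2_\boxtimes$ bound is then automatic because $E$ is bounded.

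I expect the main obstacle to lie in the contraction estimate rather than in the soft parts. The two delicate points are: (i) extracting a genuine Lipschitz modulus from the indicator-difference of the Poisson integrand when the state space is discrete, which succeeds only because distinct states are separated and Condition~\ref{eq:cond_on_q}.\ref{eq:cond_on_q-lip} is phrased with the $\mathbf{1}_{\{i\neq j\}}$ term; and (ii) ensuring that the fixed point, obtained in the larger space $L^2_\boxtimes(\Omega\times I;\mathcal D)$, is confined to $\mathcal D_E$, since $\Phi$ does not preserve $\mathcal D_E$ on arbitrary inputs and the confinement must be read off \emph{a posteriori} from the integer-jump structure together with the vanishing of the boundary rates at the fixed point.
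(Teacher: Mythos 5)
Your proposal follows essentially the same route as the paper's proof: a Picard/Banach fixed-point argument for the map defined by the right-hand side of \eqref{eq:linear_dynamics_intro}, with the contraction estimate obtained by splitting the Poisson integral into compensator plus martingale, applying Doob's inequality, computing the bracket via $\int_{\mathbb{R}}\bigl|\mathbf{1}_{[0,a]}(y)-\mathbf{1}_{[0,b]}(y)\bigr|\,dy=|a-b|$ for $a,b\ge 0$, and upgrading Condition~\ref{eq:cond_on_q}.\ref{eq:cond_on_q-lip} to a quadratic modulus through the separation of distinct states, followed by iteration of the Gronwall-type inequality. The only structural difference is the ambient space: the paper runs the argument on $\mathcal{L}_E$ and invokes the fixed-point theorem for iterated maps there, whereas you run it on all of $L^2_\boxtimes(\Omega\times I;\mathcal{D})$ and confine the fixed point to $\mathcal{D}_E$ a posteriori.

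That choice of space is where your argument has a genuine gap, and it is an internal inconsistency: you correctly flag that the inequality $\mathbf{1}_{\{i\neq j\}}\le C|i-j|^2$ ``succeeds only because distinct states are separated'', yet the space on which you set up the iteration contains paths that are not integer-valued, so this inequality is unavailable exactly where Banach's theorem needs it. Concretely, your rates are only defined (after the zero extension) at integer states, so $\Phi$ is not even defined on non-integer-valued inputs; and under any bounded extension of the rates to real states the contraction fails: take an $E$-valued $\underline{\boldsymbol X}$ along which some rate is positive and set $\underline{\boldsymbol X}'=\underline{\boldsymbol X}+\varepsilon$ with $0<\varepsilon<1$; then $\mathbb{E}^\boxtimes\bigl[\|\underline{\boldsymbol X}-\underline{\boldsymbol X}'\|_T^2\bigr]=\varepsilon^2$ while the bracket of the martingale difference is of order $\varepsilon$ at best (of order one under the zero extension), because the bracket is \emph{linear}, not quadratic, in the rate difference; hence $\mathbb{E}^\boxtimes\bigl[\|\Phi(\underline{\boldsymbol X})-\Phi(\underline{\boldsymbol X}')\|_t^2\bigr]\le C\int_0^t\mathbb{E}^\boxtimes\bigl[\|\underline{\boldsymbol X}-\underline{\boldsymbol X}'\|_s^2\bigr]ds$ cannot hold for small $\varepsilon$. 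The repair is short and you already have every ingredient: either restrict $\Phi$ to the set of integer-valued elements of $L^2_\boxtimes(\Omega\times I;\mathcal{D})$, which is closed (hence complete) and which $\Phi$ maps into itself since its output is $\underline{\xi}$ plus integer jumps, or observe that every Picard iterate after the first and every fixed point lies in the image of $\Phi$, hence is integer-valued, so the key inequality applies wherever it is actually used. With that amendment your plan goes through; moreover your concern (ii) is well founded, since the map evaluates the rates along the \emph{input} path and therefore does not preserve $E$-valuedness even on $E$-valued inputs, so membership in $\mathcal{D}_E$ must indeed be read off at the fixed point from the vanishing of the out-of-$E$ rates --- a step your write-up treats more carefully than the paper, which simply asserts that its map sends $\mathcal{L}_E$ into $\mathcal{L}_E$.
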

Note that the quantity
$$
M^x_k(t)=\int_{\mathbb{R}\times(0,t]} \mathbf{1}_{[0, 
    \kappa^x(X_{s-}^{\underline{\boldsymbol \alpha}, \underline{\boldsymbol z}, x},k, \alpha^x_s, z^x_{s-})]}(y)N_k^x(dy, ds),
$$
appearing in the right hand side of \eqref{eq:linear_dynamics_intro}, is a counting process with intensity $\kappa^x(X_{t-}^{\underline{\boldsymbol \alpha}, \underline{\boldsymbol z}, x},k, \alpha^x_t, z^x_{t-})$ at time $t\in[0,T]$ so by construction the solution to \eqref{eq:linear_dynamics_intro} (granted by Proposition~\ref{prop:X}) is almost surely an $E$-valued pure jump process with intensity matrix $Q^x(\alpha^x_t(\omega), z^x_{t-}(\omega))$ at time $t\in [0,T]$.

For a fixed admissible strategy profile $\underline{\boldsymbol\alpha}$, consider now the coupled system
\begin{equation}
\label{eq:solution-exists}
\begin{aligned}
    &X_t^{\underline{\boldsymbol\alpha},x} = \xi^x + \sum_{k=-n+1}^{n-1} k \int_{\mathbb{R}\times(0,t]} \mathbf{1}_{[0, 
    \kappa^x_s(X_{s-}^{\underline{\boldsymbol \alpha},x},k, \alpha^x_s, Z^{\underline{\boldsymbol\alpha},x}_{s-})]}(y)N_k^x(dy\otimes ds),
    \\
    &Z_t^{\underline{\boldsymbol\alpha},x} = \int_I w(x,y)K(\alpha^y_t, X^{\underline{\boldsymbol\alpha}, y}_{t-})\lambda(dy).
\end{aligned}
\end{equation}
The next theorem proves that \eqref{eq:solution-exists}
is well-posed with a unique solution in $L^2_\boxtimes$-sense. It further specifies the regularity of the solution: the aggregate variable $\underline{\boldsymbol Z}^{\underline{\boldsymbol\alpha}}$ must $\mathbb{P}\boxtimes\lambda$-a.s. be a deterministic and a continuous function of time.

\begin{theorem}
\label{thm:z-det-linear}
Let Condition~\ref{eq:cond_on_q} and \ref{cond:K_lip} hold, and let $\underline{\boldsymbol\alpha} \in \underline{\mathbb{A}}$.
\begin{itemize}
\item[(i)] There exists a unique solution $\underline{\boldsymbol X}^{\underline{\boldsymbol \alpha}}\in \mathcal{L}_E$ to \eqref{eq:solution-exists}. The corresponding aggregate $\underline{\boldsymbol Z}^{\underline{\boldsymbol \alpha}}$ is a random variable in $L^2_\boxtimes(\Omega\times I; \mathcal{C})$.
\\
\item[(ii)]
    The aggregate $\underline{\boldsymbol Z}^{\underline{\boldsymbol\alpha}}$ is $\mathbb{P}\boxtimes\lambda$-a.s. equal to a deterministic (\textit{i.e.}, constant in $\omega$) function in $L^2_\boxtimes(\Omega\times I; \mathcal{C})$. 
    \\
    \item[(iii)]
    There is a unique pair $\underline{\boldsymbol{\check X}}^{\underline{\boldsymbol\alpha}}$ and  $\underline{\boldsymbol {\check Z}}^{\boldsymbol{\underline\alpha}}$ of versions of $\underline{\boldsymbol X}^{\underline{\boldsymbol\alpha}}$ and $\underline{\boldsymbol Z}^{\underline{\boldsymbol\alpha}}$, respectively, solving \eqref{eq:solution-exists} for all $x\in I$ in the standard $L^2$-sense. Moreover $\underline{\boldsymbol {\check Z}}^{\boldsymbol{\underline\alpha}}$ is deterministic and continuous in time for all $x\in I$. 
\end{itemize}
\end{theorem}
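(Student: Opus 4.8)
The plan is to recast \eqref{eq:solution-exists} as a fixed-point problem for the aggregate alone: freeze the aggregate, solve the state dynamics with Proposition~\ref{prop:X}, recompute the aggregate, and look for a fixed point; determinism will then be extracted from the exact law of large numbers of \cite{sun2006exact}. Concretely, for a frozen aggregate $\underline{\boldsymbol z}\in L^2_\boxtimes(\Omega\times I;\mathcal D)$ let $\underline{\boldsymbol X}^{\underline{\boldsymbol\alpha},\underline{\boldsymbol z}}\in\mathcal L_E$ be the unique solution of \eqref{eq:linear_dynamics_intro} granted by Proposition~\ref{prop:X}, with $\alpha^y_t=\underline\alpha(y,t,X^{\underline{\boldsymbol\alpha},\underline{\boldsymbol z},y}_{t-})$, and define
\[
\Phi(\underline{\boldsymbol z})^x_t := \int_I w(x,y)\,K\bigl(\alpha^y_t,X^{\underline{\boldsymbol\alpha},\underline{\boldsymbol z},y}_{t-}\bigr)\,\lambda(dy).
\]
A fixed point of $\Phi$ is precisely the aggregate of a solution of \eqref{eq:solution-exists}, and the accompanying state process is read off from Proposition~\ref{prop:X}.

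For part (i) I would show $\Phi$ is a contraction (on a short horizon, then concatenated, or via a Gronwall argument on $C([0,T];L^2(\Omega\times I,\mathbb{P}\boxtimes\lambda))$) using a stability estimate for the decoupled dynamics. Given two frozen aggregates $\underline{\boldsymbol z},\underline{\boldsymbol\zeta}$, I would drive $\underline{\boldsymbol X}^{\underline{\boldsymbol\alpha},\underline{\boldsymbol z}}$ and $\underline{\boldsymbol X}^{\underline{\boldsymbol\alpha},\underline{\boldsymbol\zeta}}$ with the same Poisson measures $N^x_k$ and the same initial states $\xi^x$, so that the coupled pair agrees until a Poisson atom lands in the symmetric difference of the intensity windows $[0,\kappa^x(i,k,a,z_s)]$ and $[0,\kappa^x(i,k,a,\zeta_s)]$. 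While the two agree they occupy a common state and, the strategy being feedback, use a common action, so Condition~\ref{eq:cond_on_q}.\ref{eq:cond_on_q-lip} bounds the total splitting intensity by $2(n-1)C|z^x_s-\zeta^x_s|^p$; integrating the first-disagreement time gives
\[
\mathbb{P}\!\left(X^{\underline{\boldsymbol\alpha},\underline{\boldsymbol z},x}_t\neq X^{\underline{\boldsymbol\alpha},\underline{\boldsymbol\zeta},x}_t\right)\le 2(n-1)C\int_0^t\mathbb{E}\bigl[\,|z^x_s-\zeta^x_s|^p\,\bigr]\,ds,\qquad\lambda\text{-a.e. }x.
\]
Since agreement of states forces $K$-values to agree (same state, same feedback action), the integrand defining $\Phi(\underline{\boldsymbol z})-\Phi(\underline{\boldsymbol\zeta})$ is bounded by $2C_K\mathbf{1}_{\{X^{\underline{\boldsymbol\alpha},\underline{\boldsymbol z},y}_{t-}\neq X^{\underline{\boldsymbol\alpha},\underline{\boldsymbol\zeta},y}_{t-}\}}$ using Condition~\ref{cond:K_lip}; combining this with $w\le 1$, Cauchy--Schwarz in $y$, and $\int_{I\times I}w(x,y)^2\lambda(dy)\lambda(dx)=\|w\|_{L^2(I\times I)}^2$ bounds $\|\Phi(\underline{\boldsymbol z})-\Phi(\underline{\boldsymbol\zeta})\|$ by a time-integral of $\|\underline{\boldsymbol z}-\underline{\boldsymbol\zeta}\|$. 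This yields a unique fixed point $\underline{\boldsymbol Z}^{\underline{\boldsymbol\alpha}}$ and, through Proposition~\ref{prop:X}, a unique $\underline{\boldsymbol X}^{\underline{\boldsymbol\alpha}}\in\mathcal L_E$; the pathwise $\mathcal C$-regularity of the aggregate claimed in (i) will be obtained together with the determinism in (ii).

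For part (ii) I would exploit that, once the aggregate is deterministic, the decoupled dynamics of player $y$ depend only on that player's own data $(\xi^y,(N^y_k)_k)$. I would therefore restrict $\Phi$ to the closed subspace of $\omega$-independent (deterministic) aggregates. On this subspace $X^{\underline{\boldsymbol\alpha},\underline{\boldsymbol z},y}$ is a measurable image of the $y$-th coordinate of the essentially pairwise independent family produced by Theorem~\ref{thm:richfubini}, so $y\mapsto K(\alpha^y_t,X^{\underline{\boldsymbol\alpha},\underline{\boldsymbol z},y}_{t-})$ is itself essentially pairwise independent and bounded; the exact law of large numbers of \cite{sun2006exact} then forces, for each $t$,
\[
\Phi(\underline{\boldsymbol z})^x_t=\int_I w(x,y)\,\mathbb{E}\bigl[K(\alpha^y_t,X^{\underline{\boldsymbol\alpha},\underline{\boldsymbol z},y}_{t-})\bigr]\,\lambda(dy)\qquad\mathbb{P}\text{-a.s.},
\]
which is deterministic. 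Continuity in time follows since $t\mapsto\mathbb{E}[K(\alpha^y_t,X^{\underline{\boldsymbol\alpha},\underline{\boldsymbol z},y}_{t-})]$ solves a Kolmogorov equation with rates bounded uniformly in $y$ by Condition~\ref{eq:cond_on_q}.\ref{eq:cond_on_q-bdd}, so $\Phi$ maps the deterministic subspace into $C([0,T];L^2(I))$. As this subspace is closed and $\Phi$-invariant, the unique fixed point of part (i) lies in it; hence $\underline{\boldsymbol Z}^{\underline{\boldsymbol\alpha}}$ is $\mathbb{P}\boxtimes\lambda$-a.s.\ a deterministic, time-continuous function, which simultaneously upgrades the regularity in (i) to $L^2_\boxtimes(\Omega\times I;\mathcal C)$. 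For part (iii) I would fix the everywhere-defined representative $\check Z^x_t:=\int_I w(x,y)\,\mathbb{E}[K(\alpha^y_t,X^{\underline{\boldsymbol\alpha},y}_{t-})]\,\lambda(dy)$, well defined for \emph{every} $x\in I$ since $w(x,\cdot)$ is measurable and the integrand bounded, and solve \eqref{eq:linear_dynamics_intro} with this frozen aggregate via Proposition~\ref{prop:X} to obtain $\check X^x$ for each $x$; consistency holds because $\check X^y$ coincides with $X^{\underline{\boldsymbol\alpha},y}$ for $\lambda$-a.e.\ $y$, and uniqueness of this version follows from uniqueness in Proposition~\ref{prop:X} and of the deterministic aggregate.

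The main obstacle I anticipate is part (ii): there is an apparent circularity, since determinism of the aggregate is needed for essential pairwise independence of the states, while that independence is exactly what the exact law of large numbers needs to produce determinism. Running the fixed-point iteration inside the deterministic subspace, rather than arguing determinism after the fact, is what breaks the loop, but it requires verifying that $\Phi$ genuinely preserves essential pairwise independence and joint $(\omega,y)$-measurability under the feedback-controlled dynamics, so that \cite{sun2006exact} applies. The coupling estimate driving (i) is the analytic engine and, though standard for finite state spaces, must be arranged so that agreement of states forces agreement of the feedback actions; this is what isolates the $z$-discrepancy in the splitting intensity and keeps the Lipschitz Condition~\ref{eq:cond_on_q}.\ref{eq:cond_on_q-lip} in play.
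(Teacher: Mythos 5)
Your proposal is correct in substance, and for part (i) it has the same architecture as the paper's proof: the paper introduces exactly your map $\Phi$ (there called $U^{\underline{\boldsymbol\alpha}}$ in \eqref{eq:U}), freezing the aggregate, solving \eqref{eq:linear_dynamics_intro} by Proposition~\ref{prop:X}, recomputing the aggregate, and extracting a unique fixed point in $L^2_\boxtimes(\Omega\times I;\mathcal{D})$ via a Gronwall estimate and the Banach fixed-point theorem for iterated maps (Lemma~\ref{prop:fp-U}). Where you genuinely diverge is in technique and in parts (ii)--(iii). For the contraction estimate, the paper decomposes the dynamics into drift plus compensated martingale and bounds the predictable quadratic variation using the identity $\left(\mathbf{1}_{[0,a]}-\mathbf{1}_{[0,b]}\right)^2=\mathbf{1}_{[0,a]}-2\mathbf{1}_{[0,\min(a,b)]}+\mathbf{1}_{[0,b]}$ together with Doob's inequality, while you run a pathwise coupling with a first-disagreement time; both reduce to the same Lipschitz input from Condition~\ref{eq:cond_on_q}.\ref{eq:cond_on_q-lip} (note you should take $p=2$ there so the bound matches the squared $L^2$ norm), and your version is arguably more elementary. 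For the $\mathcal{C}$-valued regularity in (i), the paper proves it for an \emph{arbitrary} frozen aggregate (Lemma~\ref{lemma:Uz-is-cont}), applying the exact law of large numbers to the jump counts $\int_I\widetilde N^y_j\,\lambda(dy)$; you obtain continuity only for deterministic frozen aggregates, via the Kolmogorov equation for $t\mapsto\mathbb{E}\bigl[K(\alpha^y_t,X^{\underline{\boldsymbol\alpha},\underline{\boldsymbol z},y}_{t-})\bigr]$, which suffices because your invariance argument places the fixed point inside the deterministic subspace. Most notably, for parts (ii) and (iii) the paper gives no argument at all in this text --- it defers to \cite{aurell2021stochastic} --- whereas your closed-invariant-subspace argument (restrict $\Phi$ to deterministic aggregates, where the decoupled states are measurable functions of each player's own noise, hence e.p.i.\ by Theorem~\ref{thm:richfubini}, so \cite{sun2006exact} applies; then invoke uniqueness of the fixed point from (i)) is a clean, self-contained way to break the circularity you correctly identify, and it is in the spirit of the cited companion proof.

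Two technical points you should patch when writing this up. First, the exact LLN gives the equality $\Phi(\underline{\boldsymbol z})^x_t=\int_I w(x,y)\,\mathbb{E}\bigl[K(\alpha^y_t,X^{\underline{\boldsymbol\alpha},\underline{\boldsymbol z},y}_{t-})\bigr]\lambda(dy)$ $\mathbb{P}$-a.s.\ with an exceptional null set depending on $(t,x)$; to conclude that the whole path is a.s.\ deterministic you must consolidate these into a single null set, e.g.\ by applying the LLN on a countable dense set of times and using left-continuity in $t$ of both sides (the left side is left-continuous since $X^{\cdot,y}_{t-}$ is and the feedback is continuous in $t$), plus joint measurability and Fubini in $x$. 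Second, the applicability of \cite{sun2006exact} requires checking that the solution map of Proposition~\ref{prop:X} and multiplication by $w(x,\cdot)$ preserve both essential pairwise independence and joint $(\omega,y)$-measurability; you flag this yourself, and it is where the real work of parts (ii)--(iii) lies.
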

Theorem~\ref{thm:z-det-linear} justifies working with a model defined for all $x\in I$ with a deterministic, continuous-in-time aggregate in Section~\ref{sec:model}. From here on, we will represent the $L^2_\boxtimes$-elements solving system \eqref{eq:solution-exists} with the version defined for all $x\in I$ and drop the check in the notation. 

\begin{remark}
If admissible strategy profiles did not have the prescribed continuity property we could not expect the aggregate to be a continuous function of time. One example of such a case is found in \cite{aurell2020optimal} where a regulator imposes a penalty that is discontinuous in time, resulting in equilibrium controls and aggregates discontinuous in time. We leave the analysis of the more general case to future work. 
\end{remark}

We now turn to the notion of player costs and equilibrium. Denote by $\mathbb{A}$ the set of $A$-valued and $\mathcal{B}([0,T])\otimes \mathcal{B}(E)$-measurable functions on $[0,T]\times E$, continuous $t\in [0,T]$ for every $e\in E$ that. If the player populations plays according to an admissible strategy profile $\underline{\alpha}\in \underline{\mathbb{A}}$ and player $x\in I$ decides to play strategy $\boldsymbol\sigma = (\sigma(t, X^{\underline{(\boldsymbol{\alpha}^{-x}, \boldsymbol \sigma)},x}_{t-}))_{t\in [0,T]}$ where $\sigma \in \mathbb{A}$  and 
\begin{equation*}
     (\boldsymbol{\alpha }^{-x},\boldsymbol\sigma)^y
     :=
     \begin{cases}
     \boldsymbol{ \alpha}^y, &\text{if }y\neq x
     \\
     \boldsymbol\sigma, &\text{if }y=x,
     \end{cases}
 \end{equation*}
then $\underline{(\boldsymbol{\alpha}^{-x},\boldsymbol\sigma)}$ is an admissible strategy profile and the player's expected cost for using $\boldsymbol \sigma$ is
\begin{equation*}
     \mathcal{J}^x(\boldsymbol\sigma;\boldsymbol{\underline{\alpha}}) :=
     \mathbb{E}
     \Big[
        \int_0^T f^x\big(t, X^{\underline{(\boldsymbol{\alpha}^{-x},\boldsymbol\sigma)},x}_t, \beta_t, Z^{\boldsymbol{\underline{\alpha}},x}_t\big)dt 
        + 
        h^x\big(X^{\underline{(\boldsymbol{\alpha}^{-x},\boldsymbol\sigma)},x}_T, Z^{\boldsymbol{\underline{\alpha}},x}_T\big)
     \Big].
 \end{equation*}
In fact, $\mathbb{A}$ is the set of strategies a player can deviate to without destroying the admissibility of the strategy profile. Therefore, we say that if $\underline{\boldsymbol{\hat \alpha}} = (\boldsymbol{\hat \alpha}^x)_{x\in I}\in \underline{\mathbb{A}}$ satisfies
\begin{equation*}
    \mathcal{J}^x(\boldsymbol{\hat\alpha}^x; \underline{\boldsymbol{\hat\alpha}}) 
    \leq 
    \mathcal{J}^x(\boldsymbol \sigma; \underline{\boldsymbol{\hat\alpha}}),\qquad \sigma \in \mathbb{A},\ x\in I,
\end{equation*}
the $\underline{\boldsymbol{\hat \alpha}}$ is a Nash equilibrium of the graphon game. The dependence of the cost on the whole strategy profile is unnecessarily complicated, as the following reasoning shows.
Notice that $\boldsymbol Z^{\underline{(\boldsymbol\alpha^{-x},\boldsymbol\sigma)}, x} = \boldsymbol Z^{\underline{\boldsymbol\alpha},x}$ since $\underline{(\boldsymbol\alpha^{-x},\boldsymbol\sigma)} = \underline{\boldsymbol\alpha}$ for $\lambda$-a.e. $x\in I$. The other players' actions appear in player $x$'s cost indirectly, through the aggregate $\boldsymbol Z^{\underline{\boldsymbol\alpha},x}$, which is unaffected if one specific player changes control (it is an integral with respect to a non-atomic measure).  
Thus, we write $\mathcal{J}^x(\boldsymbol\sigma;\underline{\boldsymbol\alpha})$ as $J^x(\boldsymbol\sigma;\boldsymbol Z^{\underline{\boldsymbol\alpha},x})$ a function taking an admissible strategy and an aggregate variable trajectory: 
$$
\mathbb{A} \times  \mathcal{C} \ni (\boldsymbol\sigma,\boldsymbol \zeta) \mapsto J^x(\boldsymbol\sigma; \boldsymbol \zeta) \in \mathbb{R}.
$$
 In light of this, an equivalent definition of the Nash equilibrium is that a strategy profile $\underline{\boldsymbol{\hat \alpha}} = (\boldsymbol{\hat \alpha}^x)_{x\in I}$ is a Nash equilibrium in the graphon game if it satisfies
\begin{equation*}
    J^x(\boldsymbol\alpha^x; \boldsymbol Z^{\underline{\boldsymbol\alpha},x}) \leq J^x(\boldsymbol \sigma; \boldsymbol Z^{\underline{\boldsymbol\alpha},x}),\qquad \boldsymbol\sigma \in \mathbb{A},\ x\in I,
\end{equation*}
further justifying the game setup in Section 2.\newline

\emph{\textbf{Acknowledgements.}}
{The authors would like to thank Boualem Djehiche and Yeneng Sun for helpful discussions.}

\appendix

\section{Proofs for Section~\ref{sec:model}}

\subsection{Theorem~\ref{theorem:feedback-control}}
\label{sec:thm1-proof}
First, assume player $x$ uses control $\boldsymbol \sigma$ and, using the HJB equation~\eqref{eq:gg-HJB} and the definition of the minimized Hamiltonian~\eqref{eq:-gg-Hopt}, note that
\begin{align*}
    &  d\left(u^x(t, X_t^{\boldsymbol \sigma,\boldsymbol{Z}^x,x})+\int_0^t f(s, X^{\boldsymbol \sigma,\boldsymbol{Z}^x,x}_s, Z_s^x, \sigma_s)ds \right) = M_t
    \\
    &
    + H^x(t,X_t^{\boldsymbol \sigma,\boldsymbol{Z}^x,x}, Z_t^x,  u^{x}(t,\cdot),\sigma_t) 
    -H^x(t,X_t^{\boldsymbol \sigma,\boldsymbol{Z}^x,x}, Z_t^x,  u^{x}(t,\cdot), \hat a_{X_t^{\boldsymbol \sigma,\boldsymbol{Z}^x,x}}^x(t, Z_t^x, u^x(t, \cdot))),
\end{align*} 
where $\mathbf{M}$ is a zero-mean martingale. 
Hence, we write 
\begin{align*}
    &
    \mathbb{E}\Big[u^x(t, X_t^{\boldsymbol \sigma,\boldsymbol{Z}^x,x})+\int_0^t f(s, X^{\boldsymbol \sigma,\boldsymbol{Z}^x,x}_s, Z_s^x, \sigma_s) ds \Big]
    \\
    &
    = \mathbb{E}\left[  u^x(0, X_0^{\boldsymbol \sigma,\boldsymbol{Z}^x,x}) \right]
    + \int_0^t \mathbb{E}\Big[ 
    H^x(s,X_s^{\boldsymbol \sigma,\boldsymbol{Z}^x,x}, Z_s^x,  u^{x}(s,\cdot),\sigma_s)
    \\
    &\qquad\qquad\qquad
    -H^x(s,X_s^{\boldsymbol \sigma,\boldsymbol{Z}^x,x}, Z_s^x,  u^{x}(s,\cdot), \hat a_{X_s^{\boldsymbol \sigma,\boldsymbol{Z}^x,x}}^x(s, Z_s^x, u^x(s, \cdot)))
    \Big] ds. 
\end{align*}
Note that the second expectation in the right hand side is non-negative since $\hat a_e^x(t, z, h)$ minimizes $A \ni \alpha \mapsto H^x(t,e,z,h, \alpha)$ by assumption. In fact, since it is the unique minimizer, this term is strictly positive unless $\sigma_t = \hat a_{X_s^{\boldsymbol \sigma,\boldsymbol{Z}^x,x}}^x(t, Z_t^x, u^x(t, e))$. 

By taking the expectation and by recalling the terminal condition we deduce that: 
\begin{align*}
    J^x(\boldsymbol\sigma; \boldsymbol Z^{x}) 
    &= \mathbb{E}\Big[u^x(T, X_T^{\boldsymbol \sigma,\boldsymbol{Z}^x,x})+\int_0^T f(s, X^{\boldsymbol \sigma,\boldsymbol{Z}^x,x}_s, Z_s^x, \sigma_s) ds \Big] 
    \\
    &\geq \mathbb{E}\big[u^x(0, X_0^{x})\big] = J^x(\hat{\boldsymbol \phi}^x; \boldsymbol Z^{x}),
\end{align*}
where, for the last equality, we used the interpretation of $u^x$ as player $x$'s value function.
Furthermore the inequality above is an equality if and only if $\sigma_t = \hat a_{X_s^{\boldsymbol \sigma,\boldsymbol{Z}^x,x}}^x(t, Z_t^x, u^x(t, e))$.

\subsection{Regularity of the Optimal Control}
\label{app:lemma1}
Here we show that $\hat a$ is continuous in $(t,z,h)$. 

\begin{lemma}
\label{lem:reg-hat-a}
    Assume Condition~\ref{cond:cond_differentiability}.\ref{cond:diff_q} and~\ref{cond:cond_differentiability}.\ref{cond:diff_f} hold. For every $x \in I$ and $e \in E$, $(t,z,h) \mapsto \hat a^x_e(t,z,h)$ defined by the Hamiltonian minimizer in~\eqref{eq:-gg-Hopt} is continuous. Moreover $(t,z,h) \mapsto \hat a^x_e(t,z,h)$ is locally Lipschitz continuous, \textit{i.e.}, for every positive constants $C_z$ and $C_h$, for every $e \in E,$ $a \in A,$  the function $(t,z,h) \mapsto \hat a^x_e(t,z,h)$ is Lipschitz continuous on $[0,T] \times [-C_z,C_z] \times [-C_h, C_h]^{|E|}$ (with a Lipschitz constant possibly depending on $T, C_z, C_h$). 
\end{lemma}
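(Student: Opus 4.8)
The plan is to exploit that, by Condition~\ref{cond:cond_differentiability}.\ref{cond:diff_q}, the rate matrix is affine in the control, $Q^x(a,z) = Q_1(z) + a\,Q_2(z)$, so that the Hamiltonian
\[
    H^x(t,e,z,h,a) = \overrightarrow{\mathbf{1}_e}\bigl(Q_1(z) + a\,Q_2(z)\bigr)h + f^x(t,e,z,a)
\]
is the sum of an affine function of $a$ and the function $f^x$, which is strongly convex in $a$ with constant $\lambda$ by Condition~\ref{cond:cond_differentiability}.\ref{cond:diff_f}. Hence $a \mapsto H^x$ is $C^1$ and strongly convex with the same constant $\lambda$, and its derivative
\[
    \partial_a H^x(t,e,z,h,a) = \overrightarrow{\mathbf{1}_e}\, Q_2(z)\, h + \partial_a f^x(t,e,z,a)
\]
is strictly increasing in $a$. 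Because the $Q$-contribution does not depend on $a$, the strong monotonicity is carried entirely by $f^x$: for all $a,a'\in A$,
\[
    \bigl(\partial_a H^x(t,e,z,h,a') - \partial_a H^x(t,e,z,h,a)\bigr)(a'-a) \geq \lambda\,(a'-a)^2 .
\]

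Next I would characterize the constrained minimizer over the compact interval $A$ by a variational inequality rather than a first-order condition: $\hat a = \hat a^x_e(t,z,h)$ minimizes $H^x$ over $A$ if and only if $\partial_a H^x(t,e,z,h,\hat a)(b - \hat a) \geq 0$ for all $b \in A$. This covers the boundary cases cleanly. Writing this at $(t,z,h)$ with $b = \hat a' := \hat a^x_e(t',z',h')$, writing the symmetric inequality at $(t',z',h')$ with $b=\hat a$, and adding the two yields
\[
    \bigl(\partial_a H^x(t',e,z',h',\hat a') - \partial_a H^x(t,e,z,h,\hat a)\bigr)(\hat a - \hat a') \geq 0 .
\]

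The key step is then to insert and subtract $\partial_a H^x(t',e,z',h',\hat a)$ in the bracket. The piece comparing the two actions at the common argument $(t',e,z',h')$ is controlled by the strong monotonicity above, and after rearranging this produces the factor $\lambda|\hat a - \hat a'|^2$, while the remaining piece involves only the difference of $\partial_a H^x$ evaluated at the \emph{fixed} action $\hat a$. Dividing by $|\hat a - \hat a'|$ gives
\[
    \lambda\,|\hat a - \hat a'| \leq \bigl|\partial_a H^x(t',e,z',h',\hat a) - \partial_a H^x(t,e,z,h,\hat a)\bigr| .
\]

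Finally I would bound the right-hand side on the box $[0,T]\times[-C_z,C_z]\times[-C_h,C_h]^{|E|}$. Splitting $\partial_a H^x$ into its $Q$-part $\overrightarrow{\mathbf{1}_e}\,Q_2(z)\,h$ and its $f$-part $\partial_a f^x(t,e,z,\hat a)$: the $Q$-part is estimated by inserting $\overrightarrow{\mathbf{1}_e}\,Q_2(z)\,h'$ and using the local Lipschitz continuity of $Q_2$ from Condition~\ref{cond:cond_differentiability}.\ref{cond:diff_q}, together with its local boundedness and the bound $|h'|\le \sqrt{|E|}\,C_h$; the $f$-part is estimated directly by the local Lipschitz continuity of $(t,z)\mapsto\partial_a f^x$, uniform in $a,e$, from Condition~\ref{cond:cond_differentiability}.\ref{cond:diff_f}. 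Collecting these produces a constant $C=C(T,C_z,C_h)$ with
\[
    \lambda\,|\hat a - \hat a'| \leq C\bigl(|t-t'| + |z-z'| + |h-h'|\bigr),
\]
so $(t,z,h)\mapsto\hat a^x_e(t,z,h)$ is Lipschitz with constant $C/\lambda$ on the box, hence locally Lipschitz and in particular continuous. I expect the only delicate points to be the treatment of the boundary of $A$, which the variational-inequality argument handles without appealing to $\partial_a H^x=0$, and the bookkeeping needed to keep the Lipschitz constants uniform on the compact box when estimating the bilinear term $Q_2(z)\,h$.
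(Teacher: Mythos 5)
Your proof is correct and follows essentially the same route as the paper's: characterize $\hat a^x_e$ by the variational inequality over $A$, exploit that the strong monotonicity of $\partial_a H^x$ in $a$ is carried entirely by $\partial_a f^x$ (since $Q$ is affine in $a$), and bound the parameter-difference term $\bigl|\partial_a H^x(t',e,z',h',\hat a) - \partial_a H^x(t,e,z,h,\hat a)\bigr|$ using the local Lipschitz continuity of $Q_2$ and $\partial_a f^x$ on the compact box. The only cosmetic difference is that you symmetrize the two variational inequalities and freeze the action $\hat a$, whereas the paper chains the inequalities and freezes $a'$; the estimates are otherwise identical.
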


\begin{proof}
By the assumption on the dependence of $H^x$ on $a$, $\hat a^x_e(t,z,h) = \hat a^x(t,e,z,h)$ is the unique solution of the variational inequality (with unknown $a$): 
$$
    \forall b \in A, \qquad (b-a) \partial_a H^x(t,e, z, h, a) \ge 0.
$$
Let $C_z$ and $C_h$ be positive constants. Let $\theta = (t,e,z,h)$ and $\theta' = (t',e,z',h')$ with $(t,z,h)$, $(t',z',h') \in [0,T] \times [-C_z,C_z] \times [-C_h, C_h]^{|E|}$. Let $a = \hat a^x(\theta)$ and $a' = \hat a^x(\theta')$. We deduce from the above inequality that: 
$$
    (a'-a)[\partial_a H^x(\theta,a') - \partial_a H^x(\theta, a)]
    \le
    (a'-a)[\partial_a H^x(\theta,a') - \partial_a H^x(\theta', a')].
$$
By the assumption on the strict convexity of $f$, we have 
$$
    \lambda |a' - a|^2
    \le
    (a'-a)\big(\partial_a f^x(t,e,z,a') - \partial_a f^x(t,e,z,a)\big).
$$
So, combining the above inequalities and the property $|\partial_a Q^x(a,z) - \partial_a Q^x(a',z)| = 0$, we get
\begin{align*}
    \lambda |a' - a|^2
    &\le
    (a'-a)\big(\partial_a f^x(t,e,z,a') - \partial_a f^x(t,e,z,a)\big)
    \\
    &= (a' - a) \left[ \partial_a H^x(\theta,a') - \partial_a H^x(\theta,a)\right]
    \\
    &\le (a'-a)\left[\partial_a H^x(\theta,a') - \partial_a H^x(\theta', a')\right]
    \\
    &\le C |a'-a| \, \left(\sup_{e^{\prime} \in E}\left|\overrightarrow{\mathbf{1}_e} \partial_a Q^x(a',z)_{e^{\prime}} - \overrightarrow{\mathbf{1}_{e}} \partial_a Q^x(a',z')_{e^{\prime}}\right|  + \sup_{e^{\prime} \in E} |h_{e^{\prime}} - h'_{e^{\prime}}| \right)
    \\
    &\qquad + |a'-a| \, \left|\partial_a f^x(t,e,z,a') - \partial_a f^x(t',e,z',a')\right|,
\end{align*}
where $C$ can depend on $q_{\max}$, $C_z$, and $C_h$.
We conclude by using the continuity and local Lipschitz continuity propreties of $ \overrightarrow{\mathbf{1}_e} \partial_a Q^x$ and $\partial_a f^x$.
\end{proof}

\subsection{Theorem~\ref{theorem:existence-ode}}

\textit{Step 1: Definition of the solution space}\\
We start by letting, for every $C_1>0$, $\mathcal{K}_{C_1}$ be the closed ball of continuous functions $(u,p)$ from $[0,T]$ into $L^2(I\times E)\times L^2(I\times E)$ such that for all $(t,x)\in [0,T]\times I$, $p^x(t,e)\ge 0$ for $e\in E$ and $\sum_{e\in E}p^x(t,e)=1$, and such that $(u,p)$ is bounded by $C_1$ in uniform norm. In other words, $\mathcal{K}_{C_1}$ is the subset of $C([0,T];L^2(I\times E)\times L^2(I\times E))$ for which the second component is a probability on $E$ and for which the uniform norm is bounded by $C_1$, whose value will be fixed in Step~5 below.
\\~\\
\textit{Step 2: Definition of the aggregate mapping}\\
For each $(u,p)\in\mathcal{K}_{C_1}$ we define the map $\Phi^{(u,p)}$ which takes $(Z^x_t)_{0\le t\le T, x\in I}$ into
$$
\Phi^{(u,p)}\bigl((Z^x_t)_{0\le t\le T, x\in I}\bigr)=\Bigl(\int_I w(x,y)\sum_{e\in E}K\bigl(\hat a(t,e,Z^y_t,u^y(t,\cdot)),e\bigr) p^y(t,e)\, dy\Bigr)_{0\le t\le T, x\in I}.
$$
We now prove that, if we choose the space of aggregates properly, $\Phi^{(u,p)}$  has a unique fixed point, say $(\hat Z^x_t)_{0\le t\le T, x\in I}$, which depends continuously in $(u,p)\in\mathcal{K}_{C_1}$. 
Indeed, let $z,\widetilde z\in \mathcal{Z}$ where
$$
\mathcal{Z} := \{f \in C([0,T]; L^2(I; \mathbb{R})) : |f_t^x| \leq C_K,\ t\in [0,T],\ \text{ a.e. }x\in I\}.
$$
In light of Condition~\ref{cond:K_lip}, $\Phi^{(u,p)}(\mathcal{Z}) \subset \mathcal{Z}$. Moreover, $\mathcal{Z}$ is a closed subset of the Banach space $C([0,T]; L^2(I;\mathbb{R}))$, hence a complete metric space. Using Cauchy-Schwarz inequality and the Lipschitz continuity of $K$ and $\hat a$ (given by Condition~\ref{cond:K_lip} and Lemma~\ref{lem:reg-hat-a}), we get
\begin{align*}
        &\sup_{t\in[0,T]}\int_I |\Phi^{(u,p)}(z)^y - \Phi^{(u,p)}(\widetilde z)^y|^2 dy 
        \leq
       C_\Phi(C_K) \sup_{t\in [0,T]}\int_I|z^y_t - \widetilde z_t^y|^2dy,
\end{align*}
where we used the fact that $|I| = 1$ and $C_\Phi(\cdot) := \|w\|_{L^2(I\times I)}L_K L_{\hat a}(\cdot)$.
Recall that we assume $C_\Phi(C_K)< 1$. With Banach fixed point theorem we conclude that there exists a unique fixed point in 
$\mathcal{Z}$
to $\Phi^{(u,p)}$. We denote it $\hat Z^{(u,p)} = (\hat Z^{(u,p),x}_t)_{0\le t\le T, x\in I}$.
\\~\\
\textit{Step 3: Solving the Kolmogorov equation}\\
Given $\hat Z^{(u,p)}$ and $u$, we solve the Kolmogorov equation and we get the solution $\hat p$. Existence and uniqueness of the solution $\hat p$ is provided by the Cauchy-Lipschitz-Picard theorem; see, \textit{e.g.}, \cite[Theorem 7.3]{MR2759829} (viewing $q$ as a linear operator acting on the Banach space $L^2(I\times E)$).  Furthermore, given Condition~\ref{eq:cond_on_q}.\ref{eq:cond_on_q-bdd}, the time derivative of $\hat p$ is bounded. Therefore, we conclude that $\hat p$ is equicontinuous.
\\~\\
\textit{Step 4: Solving the HJB equation}\\
Given $\hat Z^{(u,p)}$ and $\hat p$, we solve the HJB equation and we get the solution $\hat u$. Here again, existence and uniqueness of the solution $\hat u$ is provided by the Cauchy-Lipschitz-Picard theorem
(viewing $\hat H$ as a Lipschitz operator acting on the Banach space $L^2(I\times E)$).  
Furthermore, there is  a uniform bound on the time derivative of $\hat u$ since the Hamiltonian $\hat H$ is bounded given Condition~\ref{cond:cond_differentiability}.\ref{cond:bound_f_g} and Condition~\ref{eq:cond_on_q}.\ref{eq:cond_on_q-bdd}. Hence $\hat u$ is equicontinuous.  
\\~\\
\textit{Step 5: Application of Schauder's theorem}\\ 
Let us call $\Psi$ the mapping constructed by the above steps, namely, $\Psi: \mathcal{K}_{C_1}\ni (u,p)\mapsto (\hat u, \hat p)$. By steps 3 and 4 above, if we choose $C_1$ large enough, $\Psi$ maps $\mathcal{K}_{C_1}$ onto $\mathcal{K}_{C_1}$, so it is well-defined. Furthermore, by the same steps and the Arzela-Ascoli theorem $\Psi(\mathcal{K}_{C_1})$ is compact. Finally we argue the continuity as follows:

We first show the continuity of $\hat Z^{(u,p)}$ in $u$ and $p$. Consider a sequence $(u^n,p^n)_n$ such that $(u^n,p^n) \in \mathcal{K}_{C_1}$ for every $n$, and  $\lim_{n\rightarrow\infty}(u^n,p^n)=(u,p)$. We denote $Z^n := Z^{(u^n,p^n)}$ and prove below that 
$\lim_{n\rightarrow \infty}\hat Z^{n} = \hat Z$. 
By Lipschitz continuity of $K$ and $\hat a$:
\begin{equation*}
    \begin{aligned}
        &\sup_{t\in [0,T]} \int_I \big|\hat Z_t^{n,x}-\hat Z_t^x\big|^2dx  \leq
        \\
        &\sup_{t \in [0,T]} \int_I \left(C_\Phi(C_K) \big|\hat Z_t^{n,x}-\hat Z_t^x\big|^2 + C_p \big|p^{n,x}(t,\cdot)-p^x(t,\cdot)\big|^2 +C_u \big|u^{n,x}(t,\cdot)-u^x(t,\cdot)\big|^2 \right)dx,
    \end{aligned}
\end{equation*}
for some constants $C_p, C_u \ge 0 $, and where $C_\Phi(C_K) =\|w\|_{L^2(I\times I)}L_K L_{\hat a}(C_K)$. Hence
\begin{equation*}
    \begin{aligned}
    &\sup_{t\in [0,T]} \int_I \big|\hat Z_t^{n,x}-\hat Z_t^x\big|^2dx 
    \\
        &
        \leq\sup_{t \in [0,T]} \int_I \frac{1}{1-C_\Phi(C_K)}\Big(C_p \big|p^{n,x}(t,\cdot)-p^x(t,\cdot)\big|^2 +C_u \big|u^{n,x}(t,\cdot)-u^x(t,\cdot)\big|^2\Big)dx,
    \end{aligned}
\end{equation*}
which tends to $0$ as $n\rightarrow\infty$.

Next, we study the continuity of $\hat p$. We have, for $s \in [0,T]$:
\begin{align*}
    &\int_I \big|\hat p^{n,x}(s,\cdot)-\hat p^x(s,\cdot)\big|^2 dx
    \\
    &\le \int_I \int_0^s \sum_{e, e^{\prime} \in E} \Big( \Big|  q^x_{e^{\prime},e}(\hat a(t,e^{\prime},\hat Z^{n,x}_t,u^{n,x}(t,\cdot)), \hat Z_t^{n,x}) 
    -  q^x_{e^{\prime},e}(\hat a(t,e^{\prime},\hat Z^{x}_t,u^{x}(t,\cdot)), \hat Z_t^{x}) \Big|^2 |\hat p^{n,x}(t, e^{\prime})|^2
    \\
    &\qquad\qquad\qquad\qquad +  |q^x_{e^{\prime},e}(\hat a(t,e^{\prime},\hat Z^{x}_t,u^{x}(t,\cdot)), \hat Z_t^{x})|^2 \Big|\hat p^{n,x}(t, e^{\prime})  - \hat p^x(t, e^{\prime}) \Big|^2 \Big) dt dx
    \\
    &\le C  \Big( \int_0^s \sum_{e, e^{\prime} \in E} \int_I \Big|  q^x_{e^{\prime},e}(\hat a(t,e^{\prime},\hat Z^{n,x}_t,u^{n,x}(t,\cdot)), \hat Z_t^{n,x}) -  q^x_{e^{\prime},e}(\hat a(t,e^{\prime},\hat Z^{x}_t,u^{x}(t,\cdot)), \hat Z_t^{x}) \Big|^2 dx dt
    \\
    &\qquad\qquad\qquad\qquad + \int_0^s \int_I  \Big|\hat p^{n,x}(t,\cdot)  - \hat p^x(t,\cdot) \Big|^2 dx dt \Big),
\end{align*}
where we used Condition~\ref{eq:cond_on_q}.\ref{eq:cond_on_q-bdd} and a uniform  (\textit{i.e.}, independent of $n$) bound on $\hat p^{n,x}(t,\cdot)$ (since $q$ is bounded independently of $n$).
By Gr\"onwall's inequality, we obtain
\begin{align*}
    &\int_I \big|\hat p^{n,x}(s,\cdot)-\hat p^x(s,\cdot)\big|^2 dx
    \\
     &\le C  \int_0^s \sum_{e, e^{\prime} \in E} \int_I \Big|  q^x_{e^{\prime},e}(\hat a(t,e^{\prime},\hat Z^{n,x}_t,u^{n,x}(t,\cdot)), \hat Z_t^{n,x}) -  q^x_{e^{\prime},e}(\hat a(t,e^{\prime},\hat Z^{x}_t,u^{x}(t,\cdot)), \hat Z_t^{x}) \Big|^2 dx dt
\end{align*}
which tends to $0$ as $n\rightarrow\infty$, and we concluded by using the continuity of $q$ (given by Condition~\ref{eq:cond_on_q}.\ref{eq:cond_on_q-lip}).

Finally, we study the continuity of $\hat u$. For $s \in[0,T]$, we have:

\begin{align*}
        &\int_I \big|\hat u^{n,x}(s,\cdot)-\hat u^{x}(s,\cdot)\big|^2dx
        \\
        &\leq \int_I \int_s^T \big|-H^x(t,e,\hat Z_t^{n,x}, \Delta_e \hat u^{n,x}(t,\cdot))+H^x(t,e,\hat Z_t^{x}, \Delta_e \hat u^{x}(t,\cdot))\big|^2dtdx
        \\
        &\leq \int_s^T \int_I \sum_{e^{\prime}\in E} \Big|q_{e,e^{\prime}}\big(\hat a(t,e,\hat Z^{n,x}_t,\hat u^{n,x}(t,\cdot)), \hat Z_t^{n,x}\big)\Big|^2|\hat u^x(t,\cdot) - \hat u^{n,x}(t,\cdot)|^2dxdt 
        \\
        &\qquad + \int_I\int_s^T \Big|q_{e,e^{\prime}}\big(\hat a(t,e,\hat Z^{x}_t,\hat u^{x}(t,\cdot)), \hat Z_t^{x}\big)-q_{e,e^{\prime}}\big(\hat a(t,e,\hat Z^{n,x}_t,\hat u^{n,x}(t,\cdot)), \hat Z_t^{n,x}\big)\Big|^2|\hat u^x(t,\cdot)|^2dtdx
        \\
        & \qquad+  \int_I \int_s^T\Big| - f^x\big(t,e,\hat Z^{n,x}_t,\hat a(t,e,\hat Z^{n,x}_t,\hat u^{n,x}(t,\cdot))\big)+ f^x\big(t,e,\hat Z^{x}_t,\hat a(t,e,\hat Z^{x}_t,\hat u^{x}(t,\cdot))\big)\Big|^2dtdx
        \\
        &\leq \int_s^T \int_I \sum_{e^{\prime}\in E} \Big|q_{e,e^{\prime}}\big(\hat a(t,e,\hat Z^{n,x}_t,\hat u^{n,x}(t,\cdot)), \hat Z_t^{n,x}\big)\Big|^2|\hat u^x(t,\cdot) - \hat u^{n,x}(t,\cdot)|^2dxdt 
        \\
        &\qquad+ C\int_I\int_s^T \Big|q_{e,e^{\prime}}\big(\hat a(t,e,\hat Z^{x}_t,\hat u^{x}(t,\cdot)), \hat Z_t^{x}\big)-q_{e,e^{\prime}}\big(\hat a(t,e,\hat Z^{n,x}_t,\hat u_t^{n,x}(\cdot)), \hat Z_t^{n,x}\big)\Big|^2dtdx
        \\
        &\qquad +  \int_I \int_s^T\Big| - f^x\big(t,e,\hat Z^{n,x}_t,\hat a(t,e,\hat Z^{n,x}_t,\hat u^{n,x}(t,\cdot))\big)+ f^x\big(t,e,\hat Z^{x}_t,\hat a(t,e,\hat Z^{x}_t,\hat u^{x}(t,\cdot))\big)\Big|^2dtdx.
    \end{align*}
Using the boundedness of $\hat u^x$ and  $\hat u^{n,x}$ (given by Condition~\ref{cond:cond_differentiability}.\ref{cond:bound_f_g}) and the boundedness of $q$ (given by Condition~\ref{eq:cond_on_q}.\ref{eq:cond_on_q-bdd}), we deduce with Gr\"onwall's inequality:
\begin{equation*}
    \begin{aligned}
        &\int_I \big|\hat u^{n,x}(s,\cdot)-\hat u^{x}(s,\cdot)\big|^2dx\\
        &\leq C\Big(\int_I\int_s^T \Big|q_{e,e^{\prime}}\big(\hat a(t,e,\hat Z^{x}_t,\hat u^{x}(t,\cdot)), \hat Z_t^{x}\big)-q_{e,e^{\prime}}\big(\hat a(t,e,\hat Z^{n,x}_t,\hat u^{n,x}(t,\cdot)), \hat Z_t^{n,x}\big)\Big|^2dtdx\\
        &\qquad +  \int_I \int_s^T\Big| - f^x\big(t,e,\hat Z^{n,x}_t,\hat a(t,e,\hat Z^{n,x}_t,\hat u^{n,x}(t,\cdot))\big)+ f^x\big(t,e,\hat Z^{x}_t,\hat a(t,e,\hat Z^{x}_t,\hat u^{x}(t,\cdot))\big)\Big|^2dtdx\Big),
    \end{aligned}
\end{equation*}
which tends to zero as $n\rightarrow\infty$ by the continuity of $f$ and $q$ (given by Condition~\ref{cond:cond_differentiability}.\ref{cond:bound_f_g} and Condition~\ref{eq:cond_on_q}.\ref{eq:cond_on_q-lip} respectively) and the fact that $\lim_{n\rightarrow\infty}\hat Z^{n} = \hat Z$.

We conclude the proof by applying Schauder's theorem, see \textit{e.g.}, \cite[Excercise 6.26]{MR2759829}, which yields the existence of a fixed point $(\hat u, \hat p) \in \mathcal{K}_{C_1}$ to $\Psi$.

\section{Proofs for Section~\ref{sec:fubini}}

\subsection{Proposition~\ref{prop:X}}

The proof is inspired by \cite[Thm. 3.2]{choutri2019mean} where the authors study problems of optimal control of McKean-Vlasov-type pure jump processes and propose the use of the the identities found between \eqref{eq:diff-in-state-prop1} and \eqref{eq:diff-in-quad-var-prop1} below.

Consider the mapping $\Psi : \mathcal{L}_E \rightarrow \mathcal{L}_E$ defined by
\begin{equation*}
    \Psi(\underline{\boldsymbol\chi}) := 
    \underline\xi + \sum_{k=-n+1}^{n-1}k \int_{(0,\cdot]} \mathbf{1}_{[0,\kappa_s(\underline\chi_{s-}, k, \underline\alpha_s, \underline z_{s-})]}(y)\underline N_k(dy,ds),\qquad
    \underline{\boldsymbol\chi}\in \mathcal{L}_E.
\end{equation*}
We note that $\Psi$ is well-defined. Indeed, for any $\underline{\boldsymbol \chi}\in \mathcal{L}_E$, $\Psi(\underline{\boldsymbol \chi})$ is a linear combination of the initial condition $\underline\xi$ and the Poisson random measures evaluated at measurable sets, hence $\mathcal{F}\boxtimes \mathcal{I}$-measurable. By construction $\Psi(\underline{\boldsymbol \chi})$ is $\mathbb{P}\boxtimes\lambda$-a.e. $\mathcal{D}_E$-valued which implies the integrability.

To conclude the proof, we show that $\Psi$ has the contraction property, \textit{i.e.}, that there exists a constant $C<1$ such that
\begin{equation*}
    \mathbb{E}^\boxtimes\left[\|\Psi(\underline{\boldsymbol \chi}) - \Psi(\underline{\boldsymbol {\widetilde\chi} })\|^2_T\right] \leq 
    C\mathbb{E}^\boxtimes\left[\|\underline{\boldsymbol \chi} - \underline{\boldsymbol {\widetilde\chi} }\|^2_T\right], \quad \underline{\boldsymbol \chi}, \underline{\boldsymbol {\widetilde \chi}} \in \mathcal{L}_E.
\end{equation*}

By independence of the Poisson measures $(N^x_k)_{k=1}^n$ for any fixed $x\in I$, the compensated martingales $(\widehat{N}^x_k)_{k=1}^n$ (cf. Section~\ref{sec:Poisson}) are orthogonal. Hence, for any $\underline{\boldsymbol \chi} \in \mathcal{L}_E$ and $t\in [0,T]$,
\begin{align*}
      &\xi^x(\omega) + \sum_{k=-n+1}^{n-1}k \int_{(0,t]} \mathbf{1}_{[0,\kappa_s(\chi^x_{s-}(\omega), k, \alpha^x_s(\omega), z^x_{s-}(\omega))]}(y)N^x_k(\omega,dy,ds) 
      \\
      &= \xi^x(\omega) + \int_0^t\sum_{k=-n+1}^{n-1} k \kappa^x_s(\chi^x_s(\omega), k, \alpha^x_s(\omega), z^x_s(\omega))ds + M^{\underline{\boldsymbol\alpha},\underline{\boldsymbol z},x}_t(\omega),\quad \mathbb{P}\boxtimes\lambda\text{-a.e.}
\end{align*}
where $\underline{\boldsymbol M}^{\underline{\boldsymbol \alpha},\underline{\boldsymbol z}}$ is a zero-mean stochastic integral, and we have that
\begin{equation}
\label{eq:diff-in-state-prop1}
    \begin{aligned}
        &|\chi^x_t(\omega) - \widetilde\chi^x_t(\omega)|^2
        \\
        &\leq 
        C\left(\int_0^t \sum_{k=-n+1}^{n-1} |k||\kappa^x_s(\chi^x_s(\omega), k, \alpha^x_s(\omega), z^x_s(\omega)) - \kappa^x_s(\widetilde\chi^x_s(\omega), k, \alpha^x_s(\omega), z^x_s(\omega))|ds \right)^2
        \\
        &\qquad + C|M^{\underline{\boldsymbol\alpha},\underline{\boldsymbol z},x}_t(\omega) - \widetilde M^{\underline{\boldsymbol\alpha},\underline{\boldsymbol z},x}_t(\omega)|^2,\quad \mathbb{P}\boxtimes\lambda\text{-a.e.}
    \end{aligned}
\end{equation}
where the process $\underline{\boldsymbol{\widetilde M}}^{\underline{\boldsymbol\alpha},\underline{\boldsymbol z}}$ is the equivalent of $\underline{\boldsymbol{ M}}^{\underline{\boldsymbol\alpha},\underline{\boldsymbol z}}$ but for $\boldsymbol{\widetilde \chi}$. The predictable quadratic variation of the process $\underline{\boldsymbol M}^{\underline{\boldsymbol\alpha},\underline{\boldsymbol z}} - \underline{\boldsymbol {\widetilde M}}^{\underline{\boldsymbol\alpha},\underline{\boldsymbol{ z}}}$ is
\begin{align*}
&\langle    \boldsymbol M^{\underline{\boldsymbol\alpha},\underline{\boldsymbol z},x} - \boldsymbol {\widetilde M}^{\underline{\boldsymbol\alpha},\underline{\boldsymbol{z}},x} \rangle_t = 
\\
&
\sum_{k=-n+1}^{n-1}
k^2
\int_{\mathbb{R}\times(0,t]}\left(
\mathbf{1}_{[0, \kappa^x_s(\chi^x_s,k, \alpha^x_s, z^x_s)]}(y) - \mathbf{1}_{[0, \kappa^x_s(\widetilde \chi ^x_s ,k, \alpha^x_s, z^x_s)]}(y)\right)^2 dy\otimes ds,\quad \mathbb{P}\boxtimes\lambda\text{-a.e.}
\end{align*}
where the $\omega$-dependence has been suppressed in the notation.
Expanding the square and integrating, using the identities
\begin{align*}
    &\left(\mathbf{1}_{[0,a]} - \mathbf{1}_{[0,b]}\right)^2 
    = \mathbf{1}_{[0,a]} - 2\mathbf{1}_{[0,\min(a,b)]} + \mathbf{1}_{[0,b]},
\end{align*}
and $a - 2\min(a,b) + b \leq |a - b|$ for $a,b\geq 0$, we obtain
\begin{equation*}
\label{eq:cite-s-and-b}
    \langle \boldsymbol M^{\underline{\boldsymbol\alpha},\underline{\boldsymbol z},x} - \boldsymbol {\widetilde M}^{\underline{\boldsymbol \alpha},\underline{\boldsymbol{ z}},x} \rangle_t
    \leq
    \int_0^t\sum_{k=-n+1}^{n-1}k^2|\kappa^x_s(\chi^x_s,k, \alpha^x_s, z^x_s) - \kappa^x_s(\widetilde \chi^x_s, k, \alpha^x_s, z^x_s)|ds,\ \mathbb{P}\boxtimes\lambda\text{-a.e.}
\end{equation*}
From the Lipschitz continuity imposed by Condition~\ref{eq:cond_on_q}.\ref{eq:cond_on_q-lip}, we get
\begin{equation}
\label{eq:diff-in-quad-var-prop1}
\begin{aligned}
    \langle    \boldsymbol M^{\underline{\boldsymbol\alpha},\underline{\boldsymbol z},x}(\omega) - \boldsymbol {\widetilde M}^{\underline{\boldsymbol \alpha},\underline{\boldsymbol{ z}},x}(\omega) \rangle_t
    &\leq
    C\int_0^t \mathbf{1}_{\{\chi^x_s(\omega) \neq \widetilde\chi^x_s(\omega)\}}ds
    \\
    &\leq 
    C\int_0^t
    \|\boldsymbol \chi^x(\omega) - \boldsymbol{\widetilde \chi}^x(\omega)\|^2_s ds,\ \mathbb{P}\boxtimes\lambda\text{-a.e.}
    \end{aligned}
\end{equation}
After taking expectation of \eqref{eq:diff-in-state-prop1}, we get using Doob's inequality and \eqref{eq:diff-in-quad-var-prop1} that
\begin{equation*}
    \mathbb{E}^\boxtimes\left[\|\Psi(\underline{\boldsymbol \chi}) - \Psi(\underline{\boldsymbol{\widetilde \chi}})\|^2_T \right] 
    \leq 
    C \int_0^T
    \mathbb{E}^\boxtimes\left[\|\underline{\boldsymbol \chi} - \underline{\boldsymbol{\widetilde \chi}}\|^2_s \right] ds.
\end{equation*}
Iterating the inequality, we get for any $N\in \mathbb{N}$ that
\begin{equation*}
    \mathbb{E}^\boxtimes\left[\|(\Psi^N)(\underline{\boldsymbol \chi})) - \Psi(\underline{\boldsymbol{\widetilde \chi}})^{(N)}\|^2_T \right] \leq \frac{(CT)^N}{N!} 
    \mathbb{E}^\boxtimes\left[\|\underline{\boldsymbol \chi} - \underline{\boldsymbol{\widetilde \chi}}\|^2_T \right],
\end{equation*}
where $(\Psi^N)$ denotes the $N$-fold composition of $\Psi$. Thus, for some $N$ large enough, $(\Phi^N)$ is a contraction and if follows from the Banach fixed-point theorem for iterated mappings (see e.g. \cite{bryant1968remark}) that $\Psi$ has a unique fixed point in the set $\mathcal{L}_E$. The fixed point is the unique (up to $\mathbb{P}\boxtimes\lambda$-modification) strong solution that was sought.

\subsection{Theorem~\ref{thm:z-det-linear}}

The first step of the proof is to show that the aggregate variable is well-defined as a fixed point to the mapping
\begin{equation}
\label{eq:U}
\begin{aligned}
    &U^{\underline{\boldsymbol\alpha}} : L^2_{\boxtimes}(\Omega\times I; \mathcal{D}) 
    \rightarrow 
    L^2_{\boxtimes}(\Omega\times I; \mathcal{D}),
    \\
    &\underline{\boldsymbol z}
    \mapsto 
    [U^{\underline{\boldsymbol\alpha}}\underline{\boldsymbol z}] : (\omega,x) \mapsto
    \Big(\int_I w(x,y)K\left(\alpha^y_{t}(\omega), X_{t-}^{\underline{\boldsymbol \alpha},\underline{\boldsymbol z}, y}(\omega)\right)\lambda(dy)\Big)_{t\in[0,T]}
\end{aligned}
\end{equation}
where $\underline{\boldsymbol X}^{\underline{\boldsymbol\alpha}, \underline{\boldsymbol z}}\in \mathcal{L}_E$ is the solution to \eqref{eq:linear_dynamics_intro} characterized in Proposition~\ref{prop:X}.

\begin{lemma}
\label{prop:fp-U}
Let Condition~\ref{eq:cond_on_q} hold. For each $\underline{\alpha} \in \underline{\mathbb{A}}$ the mapping $U^{\underline{\boldsymbol\alpha}}$ has a unique fixed point in $L^2_\boxtimes(\Omega\times I; \mathcal{D})$.
\end{lemma}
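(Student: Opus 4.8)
The plan is to apply the Banach fixed point theorem to $U^{\underline{\boldsymbol\alpha}}$ on the Banach space $L^2_\boxtimes(\Omega\times I;\mathcal D)$, after checking that $U^{\underline{\boldsymbol\alpha}}$ maps this space into itself and that a sufficiently high iterate is a contraction, exactly as was done for the map $\Psi$ in the proof of Proposition~\ref{prop:X}. First I would verify well-definedness: for fixed $\underline{\boldsymbol z}$, Proposition~\ref{prop:X} supplies $\underline{\boldsymbol X}^{\underline{\boldsymbol\alpha},\underline{\boldsymbol z}}\in\mathcal L_E$, and the joint measurability guaranteed by the Fubini extension makes $(\omega,y)\mapsto K(\alpha^y_t(\omega),X^{\underline{\boldsymbol\alpha},\underline{\boldsymbol z},y}_{t-}(\omega))$ integrable in $y$; together with $w\le 1$ and the uniform bound $|K|\le C_K$ this shows $[U^{\underline{\boldsymbol\alpha}}\underline{\boldsymbol z}]$ is bounded, $\mathcal F\boxtimes\mathcal I$-measurable and $\mathcal D$-valued in time, hence an element of $L^2_\boxtimes(\Omega\times I;\mathcal D)$.

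The heart of the proof is a stability estimate comparing the state processes associated with two aggregates $\underline{\boldsymbol z},\underline{\boldsymbol z}'$. I would reproduce the computation behind \eqref{eq:diff-in-state-prop1}--\eqref{eq:diff-in-quad-var-prop1} in the proof of Proposition~\ref{prop:X}, the essential new feature being that both the frozen aggregate and the feedback control now differ between the two systems. Writing $\boldsymbol X=\boldsymbol X^{\underline{\boldsymbol\alpha},\underline{\boldsymbol z}}$ and $\boldsymbol X'=\boldsymbol X^{\underline{\boldsymbol\alpha},\underline{\boldsymbol z}'}$, the crucial device is that $E\subset\mathbb Z$, so that $\mathbf 1_{\{X^x_s\neq X'^x_s\}}\le|X^x_s-X'^x_s|^2$, and that the feedback form forces $\alpha^x_s=\underline\alpha(x,s,X^x_{s-})$ to coincide with its primed counterpart whenever $X^x_{s-}=X'^x_{s-}$. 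Splitting on $\{X^x_s=X'^x_s\}$ and its complement then lets me invoke Condition~\ref{eq:cond_on_q}.\ref{eq:cond_on_q-lip}: on the diagonal the controls agree and only the Lipschitz-in-$z$ term survives, while off the diagonal I bound the rate difference by $q_{\max}\mathbf 1_{\{X^x_s\neq X'^x_s\}}$ using Condition~\ref{eq:cond_on_q}.\ref{eq:cond_on_q-bdd}. This yields $|\kappa^x_s(X^x_s,k,\alpha^x_s,z^x_s)-\kappa^x_s(X'^x_s,k,\alpha'^x_s,z'^x_s)|\le C(\mathbf 1_{\{X^x_s\neq X'^x_s\}}+|z^x_s-z'^x_s|)$. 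Feeding this into the quadratic-variation bound, taking expectations with Doob's inequality, and applying Grönwall's lemma gives $\mathbb E^\boxtimes[\|\underline{\boldsymbol X}-\underline{\boldsymbol X}'\|_t^2]\le C\int_0^t\mathbb E^\boxtimes[\|\underline{\boldsymbol z}-\underline{\boldsymbol z}'\|_s^2]\,ds$.

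To transfer this to $U^{\underline{\boldsymbol\alpha}}$ I would estimate the aggregate difference pathwise. Using $|K(a,e)-K(a',e')|\le 2C_K\mathbf 1_{\{e\neq e'\}}$ together with the feedback property (equal states give equal controls, hence equal $K$-values), Cauchy--Schwarz against $w(x,\cdot)\le 1$, and the integer-state bound once more, one gets $\|[U^{\underline{\boldsymbol\alpha}}\underline{\boldsymbol z}]^x-[U^{\underline{\boldsymbol\alpha}}\underline{\boldsymbol z}']^x\|_t^2\le 4C_K^2\int_I\|\boldsymbol X^{\underline{\boldsymbol\alpha},\underline{\boldsymbol z},y}-\boldsymbol X^{\underline{\boldsymbol\alpha},\underline{\boldsymbol z}',y}\|_t^2\,\lambda(dy)$. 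Integrating over $(\omega,x)$ and using the Fubini property of Definition~\ref{def:fub} to drop the trivial $x$-integral and rewrite the iterated integral as an $L^2_\boxtimes$-norm produces $\mathbb E^\boxtimes[\|U^{\underline{\boldsymbol\alpha}}\underline{\boldsymbol z}-U^{\underline{\boldsymbol\alpha}}\underline{\boldsymbol z}'\|_t^2]\le 4C_K^2\,\mathbb E^\boxtimes[\|\underline{\boldsymbol X}-\underline{\boldsymbol X}'\|_t^2]$. Combining with the previous step gives the recursive bound $\mathbb E^\boxtimes[\|U^{\underline{\boldsymbol\alpha}}\underline{\boldsymbol z}-U^{\underline{\boldsymbol\alpha}}\underline{\boldsymbol z}'\|_t^2]\le C\int_0^t\mathbb E^\boxtimes[\|\underline{\boldsymbol z}-\underline{\boldsymbol z}'\|_s^2]\,ds$. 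Iterating $N$ times and tracking the nested time integrals yields the factor $(CT)^N/N!$, so for $N$ large the iterate $(U^{\underline{\boldsymbol\alpha}})^N$ is a contraction, and the Banach fixed point theorem for iterated maps (as cited for $\Psi$) delivers the unique fixed point.

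I expect the main obstacle to be closing this loop cleanly: the aggregate difference is governed by the state difference, which is simultaneously self-referential (forcing the Grönwall step) and driven by the $z$-difference, and because the control enters in feedback form one cannot apply the Lipschitz-in-$z$ part of Condition~\ref{eq:cond_on_q}.\ref{eq:cond_on_q-lip} directly. It is precisely the split on whether the two state processes agree, combined with the fact that $E$ consists of integers, that makes the estimate go through. A secondary care point is the repeated interchange of the $\omega$- and $y$-integrations, which is exactly what the Fubini extension was introduced to license.
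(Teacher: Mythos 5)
Your proposal is correct and takes essentially the same route as the paper's own proof: the pathwise bound $\|U^{\underline{\boldsymbol\alpha}}\underline{\boldsymbol z}-U^{\underline{\boldsymbol\alpha}}\underline{\boldsymbol{\widetilde z}}\|_T^2\le C\int_I\|\boldsymbol X^{\underline{\boldsymbol\alpha},\underline{\boldsymbol z},y}-\boldsymbol X^{\underline{\boldsymbol\alpha},\underline{\boldsymbol{\widetilde z}},y}\|_T^2\,\lambda(dy)$, the state-stability estimate through the rate bound $C\bigl(\mathbf 1_{\{X\neq\widetilde X\}}+|z-\widetilde z|^p\bigr)$ fed into the quadratic variation, Doob plus Gr\"onwall, and the $(CT)^N/N!$ contraction of an iterate concluded by the Banach fixed point theorem for iterated mappings, exactly as in Proposition~\ref{prop:X}. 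Your explicit splitting on the event $\{X_{s-}=\widetilde X_{s-}\}$ to reconcile the feedback controls with Condition~\ref{eq:cond_on_q}.\ref{eq:cond_on_q-lip} makes rigorous a point the paper's write-up leaves implicit, so no gap remains.
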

Denoting the fixed point by $\underline{\boldsymbol Z}^{\underline{\boldsymbol\alpha}}$, the next lemma uses the Exact Law of Large Numbers \cite{sun2006exact} to guarantee that $\underline{\boldsymbol Z}^{\underline{\boldsymbol\alpha}}$ is $\mathcal{C}$-valued $\mathbb{P}\boxtimes\lambda$-a.s. for each $\underline{\boldsymbol\alpha}\in \underline{\mathbb{A}}$.

\begin{lemma}
\label{lemma:Uz-is-cont}
Let Condition \ref{eq:cond_on_q} and \ref{cond:K_lip} hold, and let $\underline{\alpha}\in \underline{\mathbb{A}}$. 
Then $[U^{\underline{\boldsymbol\alpha}}\underline{\boldsymbol z}] \in L^2_\boxtimes(\Omega\times I; \mathcal{C})$ for each $\underline{\boldsymbol z}\in L^2_\boxtimes(\Omega\times I; \mathcal{D})$.
\end{lemma}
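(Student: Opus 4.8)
The plan is to separate the elementary integrability bound from the genuine difficulty, which is time-continuity, and it is for the latter that the Exact Law of Large Numbers is needed. Fix $\underline{\boldsymbol z}\in L^2_\boxtimes(\Omega\times I;\mathcal{D})$ and let $\underline{\boldsymbol X}^{\underline{\boldsymbol\alpha},\underline{\boldsymbol z}}\in\mathcal{L}_E$ be the solution of \eqref{eq:linear_dynamics_intro} provided by Proposition~\ref{prop:X}. Since $w\le 1$ and $|K|\le C_K$ by Condition~\ref{cond:K_lip}, the integrand in \eqref{eq:U} is bounded, so $\|[U^{\underline{\boldsymbol\alpha}}\underline{\boldsymbol z}]^x\|_T\le C_K$ for every $x\in I$. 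This gives the required $L^2_\boxtimes$-bound at once and reduces the statement to showing that, for $\mathbb{P}\boxtimes\lambda$-a.e. $(\omega,x)$, the map $t\mapsto[U^{\underline{\boldsymbol\alpha}}\underline{\boldsymbol z}]^x_t(\omega)$ is continuous; measurability as a $\mathcal{C}$-valued map then follows from the already-known measurability as a $\mathcal{D}$-valued map.

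Next I would record the pathwise structure. Fix $(\omega,x)$. Using the continuity in time of the admissible feedback $\underline\alpha(y,\cdot,e)$, the left-continuity of $t\mapsto X^{\underline{\boldsymbol\alpha},\underline{\boldsymbol z},y}_{t-}$, the Lipschitz and boundedness properties of $K$ (Condition~\ref{cond:K_lip}), and dominated convergence in $y$ (dominated by $2C_Kw(x,\cdot)$), one checks that $t\mapsto[U^{\underline{\boldsymbol\alpha}}\underline{\boldsymbol z}]^x_t(\omega)$ is left-continuous with right limits, its jump at any $t$ being an integral of a bounded integrand over the set $\{y:\,X^{\underline{\boldsymbol\alpha},\underline{\boldsymbol z},y}\ \text{jumps at}\ t\}$. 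Consequently the path is continuous at $t$ as soon as that set is $\lambda$-null, and continuous on $[0,T]$ as soon as the jump times of the continuum of players do not accumulate, with positive $\lambda$-mass, at any single instant.

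The core of the argument is to prove this non-accumulation for $\mathbb{P}$-a.e. $\omega$, and here I would use that, by \eqref{eq:linear_dynamics_intro} and the uniform bound $|\kappa^x|\le q_{\max}$ from Condition~\ref{eq:cond_on_q}.\ref{eq:cond_on_q-bdd}, any jump of $X^{\underline{\boldsymbol\alpha},\underline{\boldsymbol z},y}$ occurs at a time carrying an atom of some $N^y_k$ in the bounded strip $[0,q_{\max}]\times[0,T]$; crucially, the \emph{locations} of these atoms depend only on the driving noise and not on $\underline{\boldsymbol z}$. Introduce the candidate-jump counting measures
\[
\tilde\mu_\omega(B):=\int_I\sum_{k=-n+1}^{n-1}N^y_k\big(\omega,[0,q_{\max}]\times B\big)\,\lambda(dy),\qquad B\in\mathcal{B}([0,T]),
\]
which dominate the $\lambda$-averaged measure of actual jump times and are built from the essentially pairwise independent family $(N^y_k)_{y,k}$ alone. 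Applying the Exact Law of Large Numbers \cite{sun2006exact} to the bounded e.p.i. family $y\mapsto\sum_k N^y_k([0,q_{\max}]\times[0,r])$ for each rational $r\in[0,T]$ yields $\tilde\mu_\omega([0,r])=(2n-1)q_{\max}\,r$ for all such $r$, $\mathbb{P}$-a.s. Since this right-hand side is continuous in $r$ while $r\mapsto\tilde\mu_\omega([0,r])$ is nondecreasing and right-continuous, the two agree for every $r$, so $\tilde\mu_\omega$ is $\mathbb{P}$-a.s. equal to a deterministic atomless measure; in particular $\tilde\mu_\omega(\{t\})=0$ for every $t$, $\mathbb{P}$-a.s.

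Since the set of players jumping at a time $t$ is contained in the set of players whose noise has an atom at $t$ in the strip, the $\lambda$-averaged measure of actual jump times is dominated by $\tilde\mu_\omega$ and therefore also atomless for $\mathbb{P}$-a.e. $\omega$. By the jump identity of the second step this forces $t\mapsto[U^{\underline{\boldsymbol\alpha}}\underline{\boldsymbol z}]^x_t(\omega)$ to have no jumps and hence, being left-continuous, to be continuous for $\mathbb{P}\boxtimes\lambda$-a.e. $(\omega,x)$; with the uniform bound $C_K$ this gives $[U^{\underline{\boldsymbol\alpha}}\underline{\boldsymbol z}]\in L^2_\boxtimes(\Omega\times I;\mathcal{C})$. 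I expect the delicate point to be precisely this ELLN step: the naive application of the law gives, for each \emph{fixed} $t$, that no positive mass of players jumps at $t$, which does not by itself rule out a random exceptional time; the argument circumvents this by passing to the $\underline{\boldsymbol z}$-independent candidate measure and to its deterministic atomless limit, obtained from only countably many test sets, so that the ``no simultaneous jump'' property holds for all $t$ simultaneously on a single full-measure event.
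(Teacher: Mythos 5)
Your proposal is correct, and its engine is the same as the paper's: every jump of $\boldsymbol X^{\underline{\boldsymbol\alpha},\underline{\boldsymbol z},y}$ sits at an atom of some $N^y_k$ in the strip $[0,q_{\max}]\times[0,T]$ (by Condition~\ref{eq:cond_on_q}.\ref{eq:cond_on_q-bdd}), these strip counts are e.p.i., do not depend on $\underline{\boldsymbol z}$, and the Exact Law of Large Numbers of \cite{sun2006exact} turns their $\lambda$-average into a deterministic quantity; combined with the time-continuity of admissible feedbacks and dominated convergence in $y$, this yields a.e.\ continuity of the aggregate. Where you genuinely differ is in how the quantifier ``for every time'' is handled. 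The paper fixes $t^*$ and a monotone sequence $t_j\to t^*$, applies the ELLN to $\widetilde N^y_j=\sum_k N^y_k([0,q_{\max}]\times[t_j,t^*))$ for each $j$, and concludes continuity at $t^*$; the exceptional $\mathbb{P}$-null set there depends on $t^*$ and on the chosen sequence, so the passage from ``sequential convergence at each fixed $t^*$, a.s.'' to ``the whole path lies in $\mathcal{C}$, a.s.'' (an uncountable conjunction) is left implicit. Your construction of the candidate-jump measure $\tilde\mu_\omega$, the application of the ELLN only at rational endpoints, and the monotonicity/right-continuity argument identifying $\tilde\mu_\omega$ with a deterministic atomless measure on a \emph{single} full-measure event close exactly this gap: atomlessness gives $\lambda$-nullity of the set of players jumping at $t$ for all $t$ simultaneously, and your c\`agl\`ad/jump-identity step then upgrades this to continuity of the path. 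So your route buys tighter quantifier management (and you correctly identify this as the delicate point) at the cost of a slightly longer setup. Two minor caveats: the well-definedness of $\tilde\mu_\omega$ as a measure for $\mathbb{P}$-a.e.\ $\omega$ (joint measurability and integrability in $y$ of $N^y_k(\omega,[0,q_{\max}]\times B)$, first for a countable generating class of sets $B$) should be justified via the Fubini property of Definition~\ref{def:fub}, exactly as the paper implicitly does for $\int_I\widetilde N^y_j(\omega)\lambda(dy)$; and, like the paper --- which cites \cite{aurell2021stochastic} at this point --- you still defer the strong measurability of $U^{\underline{\boldsymbol\alpha}}\underline{\boldsymbol z}$, since your reduction from $\mathcal{C}$-valued to $\mathcal{D}$-valued measurability needs that same external input.
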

This proves the part (i) of the theorem. Part (ii) and (iii) can be shown along the same lines of proof as is used in \cite[Thm. 2]{aurell2021stochastic}

\subsubsection{Proof of Lemma~\ref{prop:fp-U}}

Let $\underline{\boldsymbol z},\underline{\boldsymbol{\widetilde z}} \in L^2_\boxtimes(\Omega\times I; \mathcal{D})$. We have for $(\omega,x)\in\Omega\times I$:
\begin{equation}
    \label{eq:ineq-for-uz-1}
    \|U^{\underline{\boldsymbol\alpha}}\underline{\boldsymbol z}(\omega,x) - U^{\underline{\boldsymbol\alpha}}\underline{\boldsymbol{\widetilde z}}(\omega,x)\|_T^2
    \leq
     C\int_I \|\boldsymbol X^{\underline{\boldsymbol\alpha},\underline{\boldsymbol z},y}(\omega)
     -
     \boldsymbol X^{\underline{\boldsymbol\alpha},\underline{\boldsymbol{\widetilde{z}}},y}(\omega)\|_T^2\lambda(dy).
\end{equation}
Following similar lines of proof as in Proposition~\ref{prop:X}, get the initial $\mathbb{P}\boxtimes\lambda$-a.e. estimates
\begin{equation}
\label{eq:diff-in-state}
\begin{aligned}
    &
    |X^{\underline{\boldsymbol\alpha},\underline{\boldsymbol z},y}_t(\omega)-X^{\underline{\boldsymbol\alpha},\underline{\boldsymbol {\widetilde z}},y}_t(\omega)|^2
    \leq
    C\Bigg(
    \int_0^t\sum_{k=-n+1}^{n-1}|k||\kappa^y_s(X_{s-}^{\underline{\boldsymbol \alpha},\underline{\boldsymbol z}, y}(\omega),k, \alpha^y_s(\omega), z^y_{s-}(\omega)) 
    \\
    &- \kappa^y_s(X_{s-}^{\underline{\boldsymbol \alpha},\underline{\boldsymbol{\widetilde z}}, y}(\omega),k, \alpha^y_s(\omega), \widetilde z^y_{s-}(\omega))|ds\Bigg)^2
    + 
    C|M^{\underline{\boldsymbol\alpha},\underline{\boldsymbol z},y}_t(\omega) - M^{\underline{\boldsymbol\alpha},\underline{\boldsymbol{\widetilde{z}}},y}_t(\omega)|^2
    \end{aligned}
\end{equation}
and
\begin{align*}
    &\langle    \boldsymbol M^{\underline{\boldsymbol\alpha},\underline{\boldsymbol z},x}(\omega) - \boldsymbol M^{\underline{\boldsymbol \alpha},\underline{\boldsymbol{\widetilde z}},x}(\omega) \rangle_t
    \\
    &
    \leq 
    \int_0^t\sum_{k=-n+1}^{n-1}|k|^2|\kappa^y_s(X_{s-}^{\underline{\boldsymbol \alpha}(\omega),\underline{\boldsymbol z}, y},k, \alpha^y_s(\omega), z^y_{s-}(\omega)) - \kappa^y_s(X_{s-}^{\underline{\boldsymbol \alpha},\underline{\boldsymbol{\widetilde z}}, y}(\omega),k, \alpha^y_s(\omega), \widetilde z^y_{s-}(\omega))|ds.
\end{align*}
In view of Condition~\ref{eq:cond_on_q}
\begin{equation}
\label{eq:diff-in-quad-var}
\begin{aligned}
     &|X^{\underline{\boldsymbol\alpha},\underline{\boldsymbol z},x}_t(\omega)-X^{\underline{\boldsymbol\alpha},\underline{\boldsymbol {\widetilde z}},x}_t(\omega)|^2
    \\
    &\leq C\int_0^t\left(
    \|\boldsymbol X^{\underline{\boldsymbol\alpha},\underline{\boldsymbol z}, x}(\omega) - \boldsymbol X^{\underline{\boldsymbol\alpha},\underline{\boldsymbol{\widetilde z}}, x}(\omega)\|_s^2 + |z^x_{s-}(\omega) - \widetilde z^x_{s-}(\omega)|^2
    \right)ds,\quad \mathbb{P}\boxtimes\lambda\text{-a.s.}
    \end{aligned}
\end{equation}
Taking expectation in \eqref{eq:ineq-for-uz-1} we get (using the Fubini property) that
\begin{equation*}
    \mathbb{E}^\boxtimes\left[\|U^{\underline{\boldsymbol\alpha}}\underline{\boldsymbol z} - U^{\underline{\boldsymbol\alpha}}\underline{\boldsymbol{\widetilde z}}\|_T^2\right] \leq C\mathbb{E}^\boxtimes\left[\|\underline{\boldsymbol X}^{\underline{\boldsymbol\alpha},\underline{\boldsymbol z}} - \underline{\boldsymbol X}^{\underline{\boldsymbol \alpha}, \underline{\boldsymbol{\widetilde z}}}\|_T^2\right].
\end{equation*}
Then, by \eqref{eq:diff-in-state}, Doob's inequality, and \eqref{eq:diff-in-quad-var} we get
\begin{equation*}
    \mathbb{E}^\boxtimes\left[\|\underline{\boldsymbol X}^{\underline{\boldsymbol\alpha},\underline{\boldsymbol z}} - \underline{\boldsymbol X}^{\underline{\boldsymbol \alpha}, \underline{\boldsymbol{\widetilde z}}}\|_T^2\right]
    \leq
    C\int_0^T\mathbb{E}^\boxtimes\left[
    \|\underline{\boldsymbol X}^{\underline{\boldsymbol\alpha},\underline{\boldsymbol z}} - \underline{\boldsymbol X}^{\underline{\boldsymbol\alpha},\underline{\boldsymbol{\widetilde z}}}\|_s^2 + |\underline z_{s-} - \underline{\widetilde z}_{s-}|^2 \right]ds.
\end{equation*}
Hence, after one use of Gronwall's inequality, we have that
\begin{equation*}
        \mathbb{E}^\boxtimes\left[\|U^{\underline{\boldsymbol\alpha}}\underline{\boldsymbol z} - U^{\underline{\boldsymbol\alpha}}\underline{\boldsymbol{\widetilde z}}\|_T^2\right] 
        \leq
        C\int_0^T \mathbb{E}^\boxtimes\left[\|\underline{\boldsymbol z} - \underline{\boldsymbol{\widetilde z}}\|^2_s\right] ds
\end{equation*}
and we conclude the proof in the same way as in Proposition~\ref{prop:X}.

\subsubsection{Proof of Lemma~\ref{lemma:Uz-is-cont}}

Consider the function
$$
[0,T]\ni t \mapsto [U^{\underline{\boldsymbol\alpha}}\underline{\boldsymbol z}]_t(\omega,x) := \int_I w(x,y) K(a^y_t, X^{\underline{\boldsymbol\alpha},\underline{\boldsymbol z},y}_{t-})\lambda(dy),\quad (\omega,x)\in \Omega\times I.
$$
Recall that $\underline{ \alpha} \in \underline{\mathbb{A}}$ means that $\alpha^y_t = \underline{\alpha}(y,t,X^{\underline{\boldsymbol \alpha},\underline{\boldsymbol z}, y}_{t-})$ for $y\in I$ and $t\in [0,T]$. It follows from the Lipschitz assumption on $K$, compactness of $A$, and definition of $\mathbb{A}$ that
\begin{align*}
    &
    |K(\alpha^y_t, X^{\underline{\boldsymbol \alpha},\underline{\boldsymbol z}, y}_{t-}) - 
    K(\alpha^y_s, X^{\underline{\boldsymbol \alpha},\underline{\boldsymbol z}, y}_{s-})| 
    \\
    & \leq 
    C\Big(
    |\underline{\alpha}(y,t,X^{\underline{\boldsymbol \alpha},\underline{\boldsymbol z}, y}_{s-}) - 
    \underline{\alpha}(y,s,X^{\underline{\boldsymbol \alpha},\underline{\boldsymbol z}, y}_{s-})| 
    + 
    |X^{\underline{\boldsymbol \alpha},\boldsymbol z, y}_{t-} - X^{\underline{\boldsymbol \alpha},\boldsymbol z, y}_{s-}|
    \Big),\quad s,t\in[0,T].
\end{align*}
Let $t_j\in[0,T]$, $j\in \mathbb{N}$, be a sequence converging to $t^*\in [0,T]$. Without loss of generality, assume that the sequence is non-decreasing. Recall that $q_{\max}$ denotes the uniform upper bound for the intensity rates, see Condition~\ref{eq:cond_on_q}.\ref{eq:cond_on_q-bdd}. For $(\omega,x)\in (\Omega\times I)$,
\begin{align*}
    &
    \left|\int_I w(x,y)\left( X^{\underline{\boldsymbol\alpha},\underline{\boldsymbol z}, y}_{t_j-}(\omega) - X^{\underline{\boldsymbol\alpha},\underline{\boldsymbol z}, y}_{t^*-}(\omega)\right)\lambda(dy)\right|
    \\
    &\leq C\int_I \sum_{k=-n+1}^{n-1} \left|\int_{\mathbb{R}\times [t_j,t^*)} 1_{[0, \kappa^y_s(X^{\underline{\boldsymbol\alpha},\underline{\boldsymbol z}, y}_{s-}(\omega), k, \alpha^y_s(\omega), z^y_{s-}(\omega))]}(u)N^y_k(\omega,du\otimes ds) \right|\lambda(dy)
    \\
    &\leq
    C\int_I \widetilde{N}^y_j(\omega) \lambda(dy),
\end{align*}
where $\widetilde{N}^y_j := \sum_{k=-n+1}^{n-1} N^y_k([0,q_{\max}]\times [t_j,t^*))$ is for each $y\in I$ a Poisson-distributed random variable with intensity $(2n-1)q_{\max}|t^* - t_j|$, since the summands $N^y_k([0,q_{\max}]\times [t_j,t^*))$, $k=-n+1,\dots, n-1$ are independent Poisson-distributed with intensity $q_{\max}|t^* - t_j|$. Moreover, $(\widetilde N^y_j)_{y\in I}$ are e.p.i., so by the Exact Law of Large Numbers
\begin{align*}
    \int_I \widetilde{N}^y_j(\omega) \lambda(dy)
    =
    \int_I \mathbb{E}\left[ \widetilde{N}^y_j \right]\lambda(dy)
    = (2n-1)q_{\max}|t^*-t_j|,\quad \mathbb{P}\text{-a.e. } \omega\in \Omega.
\end{align*}
Hence, by the boundedness of $A$, it holds for all $x\in I$ and $\mathbb{P}$-a.e. $\omega\in \Omega$ that 
\begin{align*}
    & \lim_{j\rightarrow\infty}\left|[U^{\underline{\boldsymbol\alpha}}\underline{\boldsymbol z}]_{t_j}(\omega,x) - [U^{\underline{\boldsymbol\alpha}}\underline{\boldsymbol z}]_{t^*}(\omega,x)\right|
    \\
    &\leq 
    \lim_{j\rightarrow\infty}C\int_I\left(|\underline{\alpha}(y,t_j, X_{t_j-}^{\underline{\boldsymbol\alpha},\underline{\boldsymbol z}},y) - \underline{\alpha}(y, t^*, X_{t_j-}^{\underline{\boldsymbol\alpha}, \underline{\boldsymbol z}, y})| + 
    \widetilde{N}^y_j(\omega)\right)\lambda(dy)
    \rightarrow 0,
\end{align*}
and in particular $[U^{\underline{\boldsymbol\alpha}}\underline{\boldsymbol z}] (\omega,x) \in \mathcal{C}$ for $\mathbb{P}\boxtimes\lambda$-a.e. $(\omega,x)\in \Omega\times I$. Measurability and square-integrability of $U^{\underline{\boldsymbol\alpha}}\underline{\boldsymbol z}$ follows by the same lines of proof as Lemma 1 in \cite{aurell2021stochastic}.

\bibliographystyle{siam}

\section*{Additional Tables}

\begin{table}[H]
\small
\centering
\begin{tabular}{c|ccccccc}
\textbf{Parameters}    & \multicolumn{1}{c}{$T$} & \multicolumn{1}{c}{$\lambda^{\mathsf S}$} & \multicolumn{1}{c}{$\lambda^{\mathsf I}$}& \multicolumn{1}{c}{$\lambda^{\mathsf R}$} &  \multicolumn{1}{c}{$c_\lambda$} & \multicolumn{1}{c}{$c_I$} & \multicolumn{1}{c}{$c_D$}\\[1mm] 
\hline 
&&&&&&&\\[-1mm]
\textbf{Policy 1}   &  200  & [1.0, 1.0, 1.0, 1.0]   & [1.0, 1.0, 1.0, 1.0] & 1.0 & 10  & 1 & 1\\
&&&&&&&\\[-1mm]
\textbf{Policy 2}   &  200  & [1.0, 1.0, 1.0, 1.0]   & [0.5, 0.5, 0.5, 0.5] & 1.0 & 10  & 1 & 1\\
&&&&&&&\\[-1mm]
\textbf{Policy 3}   &  200  & [0.5, 1.0, 1.0, 0.5]   & [0.5, 0.5, 0.5, 0.5] & 1.0 & 10  & 1 & 1\\
&&&&&&&\\[-1mm]
\textbf{Policy 4}   &  200  & [0.5, 0.5, 0.5, 0.5]   & [0.5, 0.5, 0.5, 0.5] & 1.0 & 10  & 1 & 1\\
&&&&&&&\\[-1mm]
\end{tabular}
\caption{\small Parameters used in the experiment with age-groups specific lockdowns: $\lambda^{\mathsf S}$ and $\lambda^{\mathsf I}$ vary between the age groups. The lockdown is imposed by decreasing the $\lambda^{\mathsf S}$ and $\lambda^{\mathsf I}$ values of the corresponding blocks.}
\end{table}

\begin{table}[H]
\small
\centering
\begin{tabular}{c|cccccccc}
\textbf{Parameters}    & \multicolumn{1}{c}{$T$} & \multicolumn{1}{c}{$\gamma$} &\multicolumn{1}{c}{$\lambda^{\mathsf S}$} & \multicolumn{1}{c}{$\lambda^{\mathsf I}$}& \multicolumn{1}{c}{$\lambda^{\mathsf R}$} &  \multicolumn{1}{c}{$c_\lambda$} & \multicolumn{1}{c}{$c_I$} & \multicolumn{1}{c}{$c_D$}\\[1mm] 
\hline 
&&&&&&&&\\[-1mm]
\textbf{All policies}   & 40   & 0.1   & 1.0 & 0.9 & 1.0 & 10 & 1 & 1
\end{tabular}
\caption{\small Parameters used in the experiment with different cities}
\end{table}

\begin{table}[H]
\small
\centering
\begin{tabular}{c|ccccccccccccc}
\textbf{Param.}    & \multicolumn{1}{c}{$T$} & \multicolumn{1}{c}{$\beta$} & \multicolumn{1}{c}{$\gamma$}& \multicolumn{1}{c}{$\epsilon$} &  \multicolumn{1}{c}{$\rho$}& \multicolumn{1}{c}{$c_\lambda$} & \multicolumn{1}{c}{$c_I$} & \multicolumn{1}{c}{$c_D$} & \multicolumn{1}{c}{$g$} & \multicolumn{1}{c}{$\lambda^{\mathsf S}$} & \multicolumn{1}{c}{$\lambda^{\mathsf E}$} & \multicolumn{1}{c}{$\lambda^{\mathsf I}$} & \multicolumn{1}{c}{$\lambda^{\mathsf R}$} \\[1mm] 
\hline 
&&&&&&&&&\\[-1mm]
\textbf{Values}   & 40   & 0.2   & 0.1 & 0.2 & 0.95 & 10 & 1 & 1  & -0.2 &  1.0 &  1.0 &  0.9  &  1.0
\end{tabular}
\caption{\small Parameters in the SEIRD model experiments with power law graphon.}
\end{table}

\end{document}